\numberwithin{equation}{section}
\definecolor{goetheblau}{cmyk}{1.00 0.20 00 0.40}
\definecolor{hellgrau}{cmyk}{0.04 0.04 0.05 0.02}
\definecolor{sandgrau}{cmyk}{0.12 0.09 0.13 0}
\definecolor{dunkelgrau}{cmyk}{0.25 0.25 0.30 0.75}
\definecolor{purple}{cmyk}{0.08 1.00 0.30 0.36}
\definecolor{emorot}{cmyk}{0.04 1.00 0.80 0.07}
\definecolor{senfgelb}{cmyk}{0.01 0.25 1.00 0.05}
\definecolor{gruen}{cmyk}{0.62 0.40 0.87 0.09}
\definecolor{magenta}{cmyk}{0.08 0.86 0.12 0.12}
\definecolor{orange}{cmyk}{0 0.70 1.00 0.04}
\definecolor{sonnengelb}{cmyk}{0 0.12 0.95 0}
\definecolor{hellesgruen}{cmyk}{0.40 0.17 0.81 0.07}
\definecolor{lichtblau}{cmyk}{0.80 00 0.06 0.04}
\newtheorem{theorem}{Theorem}[section]
\newtheorem{corollary}[theorem]{Corollary}
\newtheorem{definition}[theorem]{Definition}
\newtheorem{lemma}[theorem]{Lemma}
\newtheorem{proposition}[theorem]{Proposition}
\theoremstyle{remark}
\newtheorem{remark}[theorem]{Remark}
\newtheorem{example}[theorem]{Example}
\newcommand{\N}{\mathbb{N}}
\newcommand{\Z}{\mathbb{Z}}
\newcommand{\R}{\mathbb{R}}
\DeclareMathOperator{\Var}{\mathbf{Var}_{ \mathfrak y }}
\DeclareMathOperator{\Varo}{\mathbf{Var}}
\DeclareMathOperator{\EW}{\mathbf{E}}
\DeclareMathOperator{\WS}{\mathbf{P}}
\DeclareMathOperator{\Cov}{\mathbf{Cov}_{ \mathfrak y }}
\DeclareMathOperator{\Covbin}{\mathbf{Cov}_{ \mathfrak y_{\mathrm{bin}}}}
\newcommand{\appA}{Appendix~\ref{sec:slepianapplication}}
\newcommand{\appB}{Appendix~\ref{sec:proof:th:dbfbm}}
\newcommand{\appC}{Appendix~\ref{sec:analyticidentities}}
\newcommand{\appD}{Appendix~\ref{sec:proofprop:coalbranches}}
\newcommand{\appG}{Appendix~\ref{sec:gaussianstuff}}
\title[Branching fractional Brownian motion]{Branching fractional Brownian motion: \\discrete approximations and maximal displacement}
\thanks{We thank Matthias Birkner and Anton Wakolbinger for valuable discussions and suggestions, Nicola Kistler for educating us about generalized random energy models and their connection to branching random walks, Marius Schmidt for the arguments used in Section~\ref{sec:THmax2proof}, Lisa Hartung for pointing us to \cite{slowdownbbm}, and an anonymous reviewer for pointing out an alternative proof of Theorem~\ref{eq:THmax2}, see
Remark~\ref{rem:refereeproof}. 
 We also thank the \emph{Allianz f\"ur Hochleistungsrechnen Rheinland-Pfalz} for granting us access to the High Performance Computer \textsc{Elwetritsch}, on which the simulations underlying our figures have been performed.  }
\author[A. Gonz\'alez Casanova]{Adri\'an Gonz\'alez Casanova}
\address{Adri\'an Gonz\'alez Casanova \\ Universidad Nacional Aut\'onoma de M\' exico (UNAM), Instituto de Matem\'aticas and Department of Statistics, University of California, Berkeley.
}
\email{adrian.gonzalez@im.unam.mx, gonzalez.casanova@berkeley.edu}
\author[J.~L. Igelbrink]{Jan Lukas Igelbrink}
\address{Jan Lukas Igelbrink  \\ Institut für Mathematik, 
  Johannes Gutenberg-Universit\"at Mainz and Goethe-Universit\"at Frankfurt, Germany.}
\email{jigelbri@uni-mainz.de, igelbrin@math.uni-frankfurt.de}
\subjclass[2020]{Primary 60J80; secondary 60G18, 60G15}
\keywords{branching fractional Brownian motion, tree-indexed processes, power law Pólya urn, coalescing renewal processes}
\begin{document}
\maketitle

\noindent
\allowdisplaybreaks
\begin{abstract}
    We construct and study branching fractional Brownian motion with Hurst parameter $H\in(1/2,1)$. The construction relies on a generalization of the discrete approximation of fractional Brownian motion (Hammond and Sheffield \cite{HS}) to power law Pólya urns indexed by trees. We show that the first order of the speed of branching fractional Brownian motion with Hurst parameter $H$ is $ct^{H+1/2}$ where $c$ is explicit and only depends on the Hurst parameter. A notion of ``branching property'' for processes with memory emerges naturally from our construction. 
  \end{abstract}
  \newpage
\tableofcontents
\newpage
\section{Introduction}\label{sec:Intro}
Fractional Brownian Motion with Hurst parameter $H$, also known as $H$-fractional Brownian motion, is the unique centered Gaussian process $B^H$ with stationary increments and variance $\Varo\big[ B^H_t \big]=t^{2H}$. We will restrict to the case $H \in (\frac12,1)$, for which  fractional Brownian Motion has positively correlated increments (and lacks the Markov property).  By a well-known result of Mandelbrot and van Ness \cite{mandelbrotvanness}, fractional Brownian motion (FBM) has a kernel representation in terms of Wiener increments $\dif W(s)$: 
\begin{equation}\label{eq:FBM2_introduction}
  B^H(t)= \int_{(-\infty, t]} K(s,t) \dif{W(s)}, \quad t\in \R.
\end{equation}
The definition of the kernel $K= K^H$ will be  recalled in Section~\ref{sec:bfbm}. 
In the following we will construct and analyse
a {\em branching} version of fractional Brownian motion, more specifically {\em fractional Brownian motion indexed by a Yule tree~$\mathfrak Y$}. This construction works in two steps: the first is for a given realisation $\mathfrak y$ of $\mathfrak Y$, the second is a randomisation over $\mathfrak y$. 

%
 \begin{figure}[h]
    \includegraphics[scale=0.75]{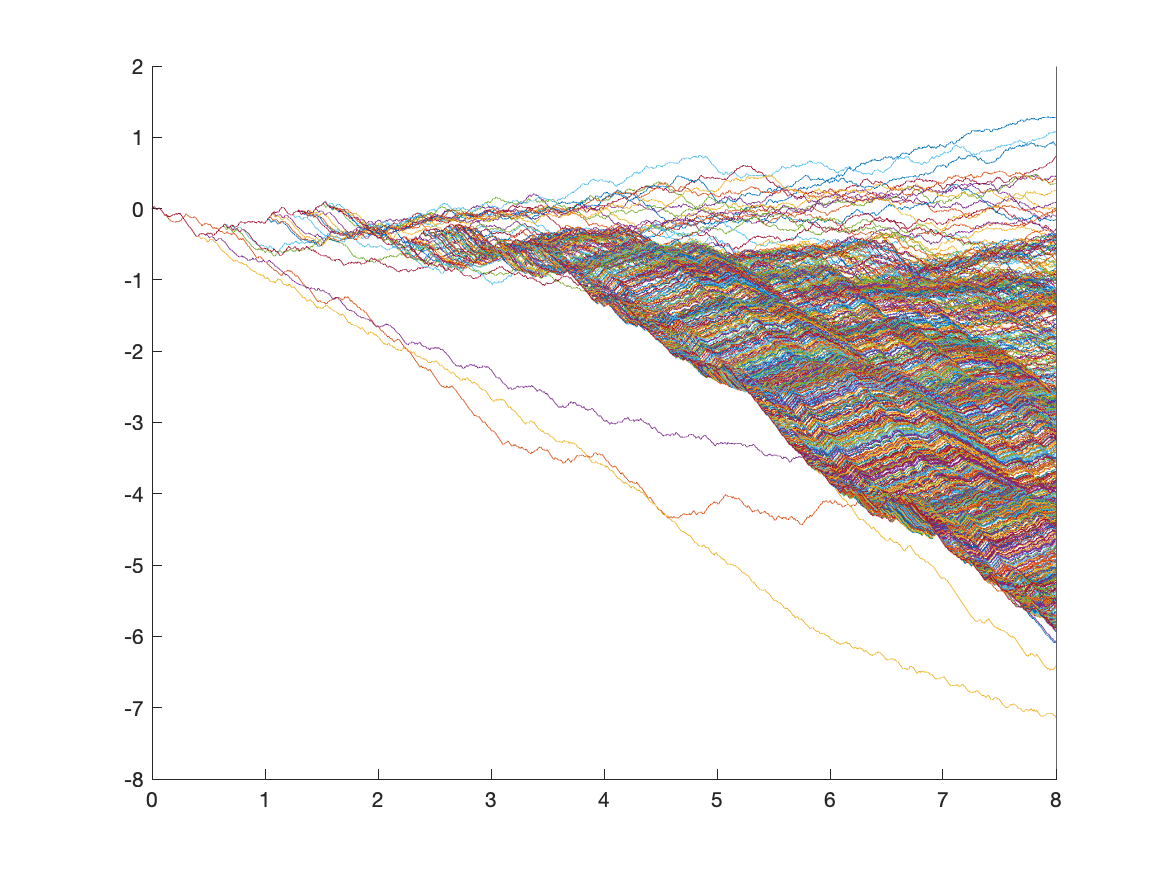}
    \caption{This shows a simulation of the branching fractional Brownian motion with Hurst parameter $H=0.85$. The $y$-axis is measured in units of $\left( \sum_{l\geq 0}q_l^2 \right)^{-\frac12}$ for $q_l$ defined by \eqref{defqn} and $\alpha = H-\tfrac12$.
    The simulation is based on the  discrete approximation described in Section~\ref{sec:treeindexhsurns}
    }
    \label{fig:introBFBM}
\end{figure}
 The representation \eqref{eq:FBM2_introduction} lends itself for a construction of the $\mathfrak y$-indexed FBM. To this purpose, think of $\mathfrak y$ as the disjoint union  of countably many edges $\{h\} \times (\ell_h, r_h]$, where $h\in \bigcup_{n\in \N_0}\{0,1\}^n$ is the Ulam-Harris name (see \cite[Section~2.1]{harris} and Section~\ref{sec:treeindexhsurns}) of the edge and $(\ell_h, r_h]$ is the life time interval of the edge. For each $v = (h,t) \in \mathfrak y$, let $t(v):=t$.  In this way, the {\em ancestral lineage} $\mathfrak a(v) \subset \mathfrak y$  of each $v \in \mathfrak y$ is in isometric  correspondence  with $(-\infty, t(v)]$. Let $dW_{\mathfrak y}(v), \, v \in \mathfrak y$, be a standard Gaussian white noise on $\mathfrak y$. Then the {\em $\mathfrak y$-indexed FBM}  has the representation
 \begin{equation}
   \label{eq:BFBM2_introduction}
  B_{\mathfrak y}^H(v):= \int_{\mathfrak a(v)} K(t(u),t(v)) \dif{W_{\mathfrak y}(u)}, \quad v \in \mathfrak y.
\end{equation}
This construction is implicit in work by Adler and Samorodnitsky, see \cite{Adler}, who introduced branching fractional Brownian as a starting point for their construction of super fractional Brownian motion as a process of measures on the path space that can be seen as a high density limit of branching fractional Brownian motion.  For related approaches to tree-indexed processes with memory see \cite{treeGaussianLLN},  who studied the long-term behaviour of their cloud of particles with a focus on the bulk. 
 \\

Our paper has two main contributions. Firstly, in Theorem~\ref{th:dbfbm}  we will construct discrete approximations of $\mathfrak y$-indexed FBM $B_{\mathfrak y}^H$ (and thus also the branching fractional Brownian motion) for $H > \frac 12$ via a discrete approximation. This approach is based on the power law P\'olya's urn of Hammond and Sheffield  \cite{HS} and a recent analysis of the random genealogy that underlies this urn, see \cite{IW}.

In a nutshell, the approximation of $B^H$ works as follows. For $\alpha := H-\frac 12$, consider  a family $\left(R_i\right)_{i\in\Z}$ of independent $\N$-valued random variables  with distribution
\begin{equation}
  \mu\left( \left\{n, n+1, \ldots \right\} \right)=n^{-\alpha}, \qquad n\in\N.
\end{equation}
The types (say $-1$ or $+1$) of the individuals  $i\in \Z$ are determined recursively:  Each individual $i$ selects $i-R_i$ as its parent and inherits the parent's type. As it turns out, this leads to a forest of countably many trees, see Section~\ref{sec:HSdesc}. If each of these trees is assigned the type $-1$ or $+1$ by a fair coin tossing, 
it further turns out that the rescaled sums over the types of the individuals $1,\ldots, \lfloor tn \rfloor $, $t\ge 0$, converge to fractional Brownian motion with Hurst parameter $H$ as $n \to \infty$. 

The proof of Theorem~\ref{th:dbfbm}, which will be carried out in  Section~\ref{sec:proofthdbfbm}, reveals interesting analytic identities. While the representation \eqref{eq:FBM2_introduction} ( as well as \eqref{eq:BFBM2_introduction} ) has the flavour of a moving average, the Hammond-Sheffield approximations of FBM is more in the spirit of an autoregression. Indeed, as an aside of our proof of Theorem~\ref{th:dbfbm}, we will obtain a ``microscopic'' interpretation of the prediction formula of Gripenberg and Norros \cite{fBMCondEW}.

The simulation depicted in Figure~\ref{fig:introBFBM} is based on the discrete approximation provided by Theorem~\ref{th:dbfbm}.

Our second main contribution in this paper is the analysis of the speed of the right-most particle of branching fractional Brownian motion,  in the spirit of McKean's celebrated work \cite{mckean1975} about branching Brownian motion, which was later substantially refined by Bramson  \cite{bramson} and Lalley and Sellke \cite{sellke}.
Specifically we will show that 
  the maximum $M^H(t)$ of a BFBM 
  with Hurst parameter $H>\tfrac12$ behaves asymptotically like
  \begin{equation}\label{eq:mtdef2intro}
   m(t):= 
    t^{H+\frac12} \sqrt{ \frac{ \sqrt{\pi} 2^{2H+1} H  }{ \Gamma(1-H)  \Gamma\left(H+\frac12\right) \left(H+\frac12\right)^2 } }  
  \end{equation}
  in the sense that for all $\varepsilon>0$
  \begin{equation}
   \mathcal P \left(\left\vert \frac{ M^H(T) }{m(t) } -1 \right\vert > \varepsilon \right) \rightarrow 0 \quad \mbox { for }t\rightarrow \infty.
  \end{equation}
 See 
 Figure~\ref{fig:mt} for an illustration of $m(t)$ for $t$ fixed and varying $H$. 
 Note that for $H=\tfrac12$ $m(t)$ equals $\sqrt{2}t$, which is consistent with the results about branching Brownian motion.\
 \begin{figure}[h]
 \begin{center}
     \includegraphics[scale=0.5]{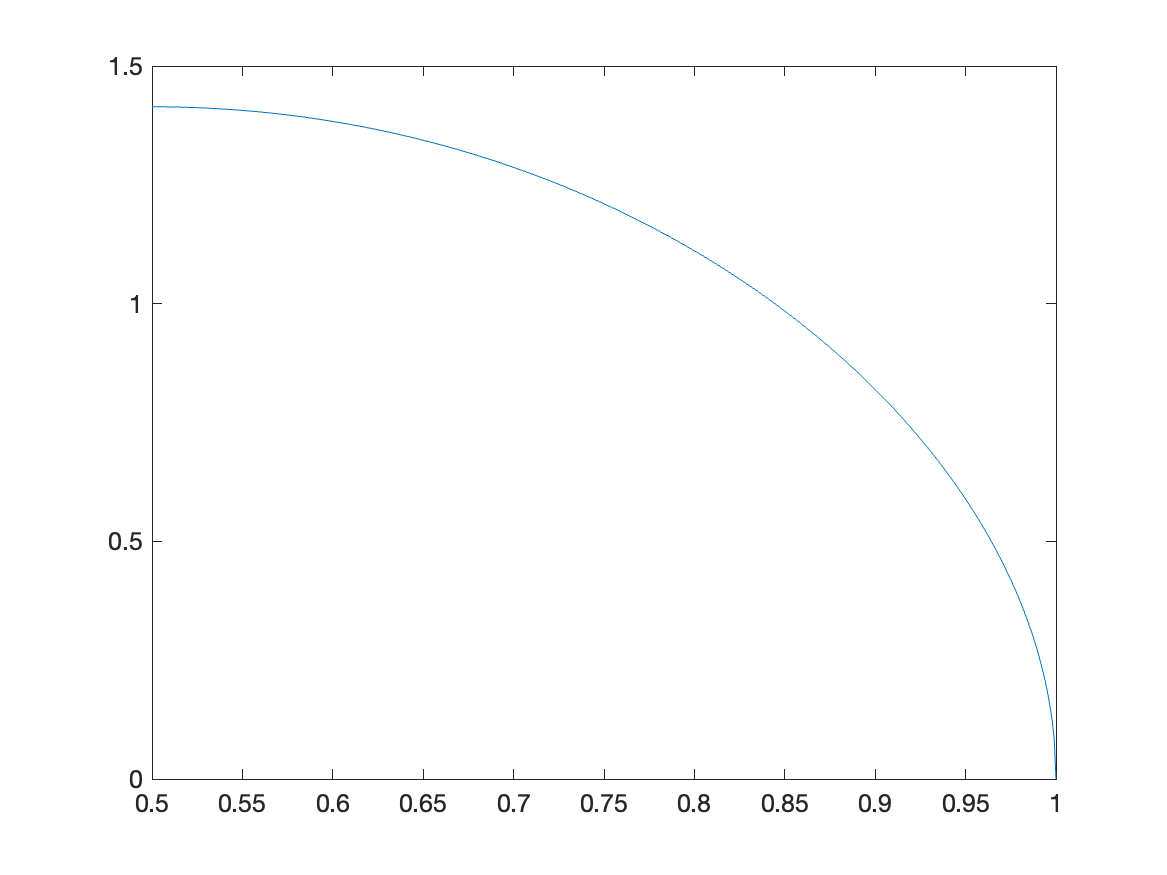}
 \end{center}
    
    \caption{This is an illustration of the behaviour of $m(t)/t^{H+\frac12}$ as a function of~$H$.}
    \label{fig:mt}
\end{figure}

Up to the constant factor in \eqref{eq:mtdef2intro}, the leading order $t^{H+\frac 12}$ of the maximum can be easily explained. Indeed, for a number $\lfloor e^t\rfloor$ of   independent normally distributed random variables with variance $t^{2H}$ the leading order of the maximum would be given by 
$$ \mathfrak{m}(t) := \sqrt 2 \,\, t^{H+\frac12 } $$
as suggested by the estimate 
$$  e^t  \WS\left( \mathcal N(0, t^{2H}) \geq \mathfrak{m}(t) \right) \approx e^t  \exp\left(-\frac{\mathfrak{m}(t)^2}{2t^{2H}} \right) =1. $$\FloatBarrier
Due to correlations we obtain \eqref{eq:BFBM2_introduction} which only differs by the constant prefactor depending on $H$. 
The statement on $M^H(t)$ is proved in Theorem~\ref{eq:THmax2}, which in turn is strongly connected to Theorem~\ref{eq:THmax}, see Remark~\ref{eq:remlog2}. 
The proof techniques of these theorems rely heavily on exploring the connection
between branching random walks and the so-called generalised random energy models (GREM). Those models were introduced and discussed by Derrida in the 1980's in \cite{Derrida}, \cite{Derrida1985AGO} and \cite{PhysRevB.24.2613}. An introduction to those can be found in Lecture Notes by Kistler \cite{MR3380419}. 

The connection between GREMs and branching random walks has first been explored by Arguin, Bovier and Kistler in \cite{MR2838339} for branching Brownian motion. In contrast to Brownian motion fractional Brownian motion is a process with memory. The corresponding GREM is then a continuous GREM with decreasing variance. 
Of such a GREM, Bovier and Kurkova analyse the leading order of the maximum in \cite{GREM2}. Their results lead to Theorem~\ref{eq:THmax}, where the underlying tree is a deterministic binary branching tree, see Section~\ref{sec:THmaxproof}. For subleading orders work by Maillard and Zeitouni \cite{slowdownbbm} allows some conjectures in our regime, see Remark~\ref{rem:secondorder}. For the proof of Theorem~\ref{eq:THmax2}, where the underlying tree is a Yule tree, we rely on methods developed first in \cite{MR2838339} and later in \cite{MR3358969} by Kistler and Schmidt. 
In Remark~\ref{remark:history} we give a more detailed account of (some of) the literature on the GREM, with an emphasis on work that is relevant for the ideas that are used in the present paper.
%

Our proof of Theorem~\ref{eq:THmax2} is conceptual in the sense that it makes use of the connection between GREMs
and branching random walks, Section~\ref{sec:THmax2proof}.  As pointed out to us by an anonymous reviewer of a previous version of this paper, 
Theorem~\ref{eq:THmax2} can also be deduced in an expedited manner from known results on branching Brownian motion by using the representation \eqref{eq:BFBM2_introduction} and the Payley-Wiener partial integration formula, see Remark~\ref{rem:refereeproof}.

%
%
%
%

\section{Preliminaries}
\subsection{The Hammond-Sheffield random walk
}\label{sec:HSdesc}
We start by briefly recalling the urn model of \cite{HS} along the lines of \cite{IW}. 

For $\alpha \in \left(0, \tfrac12 \right)$ and a slowly varying function $L: \R \rightarrow \R^+$ let $\mu:=\mu_{\alpha, L}$ be a probability measure on $\N$ having regularly varying tails
\begin{equation}\label{basic}
  \mathbf \mu(\{n,n+1,\ldots\}) \sim n^{-\alpha} L(n),
\end{equation}
and let $R$ be an $\mathbb N$-valued random variable with distribution $\mu$.

Let $\mathcal G_\mu$ be a random directed graph  with vertex set $\Z$ and edge set $E\left( \mathcal G_\mu \right)$ generated in the following way: Let $\left(R_i\right)_{i\in\Z}$ be a family of independent copies of $R$. The random set of edges $E\left( \mathcal G_\mu \right)$ is then given by
\begin{equation}
  E\left( \mathcal G_\mu \right):= \left\{ (i, i- R_i): i\in\Z \right\}.
\end{equation}
 This induces the (random) equivalence relation
\begin{equation}
  i \sim j :\Longleftrightarrow i \mbox{ and }j \mbox { belong to the same connected component of } \mathcal G_\mu.
\end{equation}
If $\alpha>1/2$ there is only one connected component and this relation is trivial. 

The individuals' types arise as follows: Assume that each component of $\mathcal G_\mu$ gets its type by an independent copy of a real-valued random variable $Y$ with 
\begin{equation}\label{fourth}
  \mathbf E[Y] = 0 \mbox{ and } 0< \mathbf E[Y^4] < \infty. \end{equation}
In the situation of  \cite{HS}, $Y$ is a centered Rademacher$(p)$ variable, i.e.
\begin{equation}
  \label{Rade} Y = \xi-(2p-1) \mbox{ with } \WS(\xi=+1)=p,\, \WS(\xi=-1)=1-p.
\end{equation}
Denote by $\mathscr C_i$ the component which contains $i$ (note that $\mathscr C_i = \mathscr C_j $ if $i\sim j$). For $i\in \mathbb Z$ the type of the component  $\mathscr C_i$ will be denoted by $Y_i$. 
Define the ``random walk'' (with dependent increments)
\begin{equation}\label{eq:Sndef}
  S_n := \sum_{i=1}^n Y_i,  \quad n=0,1,\ldots.
\end{equation}
Let us analyze the covariance structure of this random walk to motivate its relation to fractional Brownian motion: By construction,
\begin{equation} \label{sigman}\sigma_n^2 := \mathbf {Var}[S_n] = \sum_{i,j \in [n]} \mathbf{Cov}[Y_i,Y_j] = \mathbf{E}[Y^2] \, \sum_{i,j \in [n]} \mathbf P(i \sim j).
\end{equation}
\cite[Lemma 3.1]{HS} shows by Fourier and Tauberian arguments that 
\begin{equation} \label{asvar}
  \sum\limits_{i,j \in [n]} \mathbf P(i\sim  j) \sim \frac{C_1}{\alpha(2\alpha+1)} \cdot \frac{n^{2\alpha +1}}{L(n)^2} \quad \mbox{ as } n\to \infty;
\end{equation}
with $C_1$ as in \eqref{eq:defc1}, see \cite[(1.8)]{IW}. 

For $\frac {i-1}n \le t \le \frac{i}n$, $i,n \in \mathbb N$,  let
\begin{equation}\label{eq:defSnintro}
  \mbox{$S^{(n)}(t)$ be the linear interpolation of $S_i/\sigma_n$ and $S_{i+1}/\sigma_n$.}
\end{equation}
Because $(S_n)$ has stationary increments by construction, it follows from \eqref{sigman} and \eqref{asvar} that $S^{(n)}$ has, asymptotically as $n \to \infty$, the covariance structure of fractional Brownian motion with Hurst parameter $H:=\tfrac12+\alpha$. In order to prove that $S^{(n)}$ converges (in the sense of finite dimensional distributions) to 
fractional Brownian motion, it is shown in \cite{IW}  that the finite dimensional distributions of $S^{(n)}$ are asymptotically Gaussian. This is provided by \cite[Theorem~1.1]{IW}, see also \cite[Theorem~1.1]{HS}.\\

The renewal function,
\begin{equation}\label{defqn}
  q_n := \mathbf P \left( \tilde R_1+\ldots+ \tilde R_j =n \mbox{ for some }j\geq 0 \right)
\end{equation}
with $\tilde R_1, \tilde R_2,\ldots$ being  independent copies of $R$, an $\N$-valued random variable with distribution $\mu$ as in \eqref{basic},
is the main ingredient to describe the random graph $G_\mu$. As in \cite{IW} we will work under the condition 
\begin{equation}\label{qas}
  q_n \sim \frac{1}{\Gamma(\alpha)\Gamma(1-\alpha)} \frac{n^{\alpha -1}}{L(n)} \quad \mbox{as } n\rightarrow \infty,
\end{equation}
see \cite[Theorem~1.1(B)]{IW}. 
This is equivalent to the validity of the Strong Renewal Theorem for the renewal process with increment distribution \eqref{basic}, see Caravenna and Doney \cite{CaravennaDoney}, whose Theorem~1.4 gives necessary and sufficient conditions in terms of $\mu$ for the validity of \eqref{qas}. A well-known sufficient criterion for \eqref{qas} is Doney's \cite{D} criterion 
\begin{equation} \label{Doney}
  \sup_{n \ge  1} \frac{n\mathbf P(R = n)}{\mathbf P(R > n)}<\infty.
\end{equation}
We will work under the more restrictive assumption of $L\equiv 1$ and \eqref{Rade} with $p=\tfrac12$ to simplify notation. This means that we assign a type to each component of the random graph $G_{\mu}$ either $+1$ or $-1$ with probability $\tfrac12$.\\

Another key ingredient to analyse this model is the pair coalescence probability $\WS\left(0\sim n\right)$. 
Under the above assumption we get
\begin{equation}\label{eq:coalprob}
 \WS\left( 0 \sim n \right)\sim \frac{1}{ \sum_{l\geq 0}q_l^2 } \cdot   \frac{\Gamma(1-2\alpha)}{\Gamma(\alpha)\Gamma(1-\alpha)^3} \cdot n^{2\alpha-1} 
\end{equation}
for $n\rightarrow \infty$, see \cite[Proposition~2.1]{IW}.\normalcolor

\subsection{Tree-indexed Hammond-Sheffield-urns}\label{sec:treeindexhsurns}
 Now we introduce a branching version of the HS-model. Denote by $\mathcal Y$ the set of binary branching $\R$-trees with countably many branches. 
Let $\mathfrak y\in \mathcal Y$ (for example a realisation of Yule tree, in which every individual splits into two after an $\mathrm{Exp}(1)$-distributed time ), see Figure~\ref{fig:fig1}. We now recall some notation introduced in Section~\ref{sec:Intro} and add some more needed: We denote by $\mathcal B$ the collection of branches of the tree, such that $\mathfrak y:= \bigcup_{b\in \mathcal B} b$. 
 Assume that at a branch $b\in \mathcal B$ there is a branching event at time $s$, then we name this new branch ${(b,s)}$. For the sake of notational ease we shorten this to $bs$. The main branch is named $0$, see Figure~\ref{fig:fig1}. For two branches $b, \tilde b$ we denote by $b \wedge \tilde b$  the time when they split, for example $0\wedge 0rs=r $ and $0r \wedge 0rs = s$, as depicted in Figure~\ref{fig:fig1}. The point on the branch $b$ at time $t$ on the tree is denoted by $(b,t)_{\mathfrak y}$. If $bs$ and $b$ are two branches, we say that $bs\backslash s=b$.
 We say that the branch $b$ is older than the branch $\tilde b$ if the last digit ( $\in \R$ ) of $\tilde b$ is bigger than the last digit of $b$. For example, $0$ is the oldest branch and $b$ is older than $bs$.
 
%
  \begin{figure}[h]
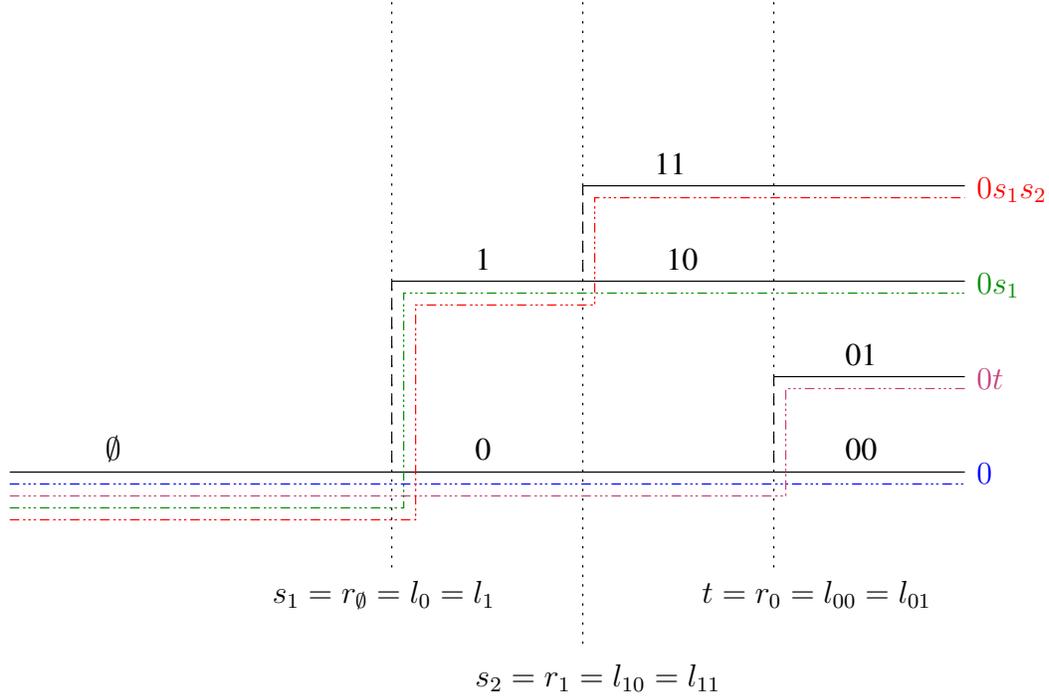

   \begin{center}
 \hspace*{-1.1cm}{\pgfkeys{/pgf/fpu/.try=false}%
\ifx\XFigwidth\undefined\dimen1=0pt\else\dimen1\XFigwidth\fi
\divide\dimen1 by 8424
\ifx\XFigheight\undefined\dimen3=0pt\else\dimen3\XFigheight\fi
\divide\dimen3 by 5424
\ifdim\dimen1=0pt\ifdim\dimen3=0pt\dimen1=3946sp\dimen3\dimen1
  \else\dimen1\dimen3\fi\else\ifdim\dimen3=0pt\dimen3\dimen1\fi\fi
\tikzpicture[x=+\dimen1, y=+\dimen3]
{\ifx\XFigu\undefined\catcode`\@11
\def\temp{\alloc@1\dimen\dimendef\insc@unt}\temp\XFigu\catcode`\@12\fi}
\XFigu3946sp
\ifdim\XFigu<0pt\XFigu-\XFigu\fi
\definecolor{green1}{rgb}{0,0.56,0}
\clip(588,-6612) rectangle (9012,-1188);
\tikzset{inner sep=+0pt, outer sep=+0pt}
\pgfsetfillcolor{black}
\pgftext[base,left,at=\pgfqpointxy{5550}{-5625}] {\fontsize{12}{14.4}\usefont{T1}{ptm}{m}{n}$t=r_{0}=l_{00}=l_{01}$}
\pgfsetlinewidth{+7.5\XFigu}
\pgfsetstrokecolor{black}
\draw (4800,-3000)--(7200,-3000);
\draw (6000,-4200)--(7200,-4200);
\pgfsetdash{{+60\XFigu}{+60\XFigu}}{++0pt}
\draw (3600,-3600)--(3600,-4800);
\draw (6000,-4200)--(6000,-4800);
\draw (4800,-3000)--(4800,-3600);
\pgfsetdash{{+15\XFigu}{+45\XFigu}}{+15\XFigu}
\draw (4800,-1800)--(4800,-5925);
\draw (6000,-1800)--(6000,-5400);
\pgfsetdash{}{+0pt}
\draw (1200,-4800)--(7200,-4800);
\pgfsetstrokecolor{blue}
\pgfsetdash{{+60\XFigu}{+24\XFigu}{+15\XFigu}{+18\XFigu}{+15\XFigu}{+18\XFigu}{+15\XFigu}{+24\XFigu}}{+0pt}
\draw (1200,-4875)--(7200,-4875);
\pgfsetstrokecolor{magenta}
\draw (1200,-4950)--(6075,-4950)--(6075,-4275)--(7200,-4275);
\pgfsetstrokecolor{green1}
\draw (1200,-5025)--(3675,-5025)--(3675,-3675)--(7200,-3675);
\pgfsetstrokecolor{red}
\draw (1200,-5100)--(3750,-5100)--(3750,-3750)--(4875,-3750)--(4875,-3075)--(7200,-3075);
\pgfsetstrokecolor{white}
\draw (600,-1200)--(600,-6600)--(9000,-6600)--(9000,-1200)--(600,-1200);
\pgfsetstrokecolor{black}
\pgfsetdash{{+15\XFigu}{+45\XFigu}}{+15\XFigu}
\draw (3600,-1800)--(3600,-5400);
\pgfsetfillcolor{red}
\pgftext[base,left,at=\pgfqpointxy{7275}{-3075}] {\fontsize{12}{14.4}\usefont{T1}{ptm}{m}{n}$0s_1s_2$}
\pgfsetfillcolor{green1}
\pgftext[base,left,at=\pgfqpointxy{7275}{-3675}] {\fontsize{12}{14.4}\usefont{T1}{ptm}{m}{n}$0s_1$}
\pgfsetfillcolor{magenta}
\pgftext[base,left,at=\pgfqpointxy{7275}{-4275}] {\fontsize{12}{14.4}\usefont{T1}{ptm}{m}{n}$0t$}
\pgfsetfillcolor{blue}
\pgftext[base,left,at=\pgfqpointxy{7275}{-4875}] {\fontsize{12}{14.4}\usefont{T1}{ptm}{m}{n}$0$}
\pgfsetfillcolor{black}
\pgftext[base,left,at=\pgfqpointxy{1800}{-4725}] {\fontsize{12}{14.4}\usefont{T1}{ptm}{m}{n}$\emptyset$}
\pgftext[base,left,at=\pgfqpointxy{4125}{-3525}] {\fontsize{12}{14.4}\usefont{T1}{ptm}{m}{n}1}
\pgftext[base,left,at=\pgfqpointxy{4125}{-4725}] {\fontsize{12}{14.4}\usefont{T1}{ptm}{m}{n}0}
\pgftext[base,left,at=\pgfqpointxy{5325}{-3525}] {\fontsize{12}{14.4}\usefont{T1}{ptm}{m}{n}10}
\pgftext[base,left,at=\pgfqpointxy{5250}{-2925}] {\fontsize{12}{14.4}\usefont{T1}{ptm}{m}{n}11}
\pgftext[base,left,at=\pgfqpointxy{6450}{-4125}] {\fontsize{12}{14.4}\usefont{T1}{ptm}{m}{n}01}
\pgftext[base,left,at=\pgfqpointxy{6450}{-4725}] {\fontsize{12}{14.4}\usefont{T1}{ptm}{m}{n}00}
\pgftext[base,left,at=\pgfqpointxy{2850}{-5625}] {\fontsize{12}{14.4}\usefont{T1}{ptm}{m}{n}$s_1=r_{\emptyset}=l_{0}=l_{1}$}
\pgftext[base,left,at=\pgfqpointxy{4125}{-6150}] {\fontsize{12}{14.4}\usefont{T1}{ptm}{m}{n}$s_2=r_{1}=l_{10}=l_{11}$}
\pgfsetdash{}{+0pt}
\draw (3600,-3600)--(7200,-3600);
\endtikzpicture}%
   \end{center}
   
   \caption{ This figure contains a translation between the notations introduced in Section~\ref{sec:treeindexhsurns} and in the Introduction. The Ulam-Harris names of the edges are written in black. The birth and death times are $s_1, s_2$ and $t$. The names of the branches in our notation and the corresponding ancestral lines are displayed in multiple colours. See Remark~\ref{remark:branchnames} for a detailed explanation of the translation between the two notations.}
   \label{fig:antonnotation}
 \end{figure}
  \begin{remark}\label{remark:branchnames} Let us connect our notation (introduced in the paragraph above) with the classic Ulam Harris notation, which we discussed before Equation \eqref{eq:BFBM2_introduction}: The branch $0 s_1$ in Figure~\ref{fig:antonnotation} consists of the edges with Ulam-Harris names $f=\emptyset, g=1, h=(h_1, h_2)=(1,0)$ (which we write as 10). It branches of  from the main branch at time $s_1=l_{h_1}=l_1$.\\Generally speaking: A branch containing the edge with Ulam-Harris name $h=(h_1, h_2, \ldots, )$ splits-off at times  $$\left\{ l_{h_1, h_2, ..., h_k}: h_k=1 \right\}.$$
  Our notation consists of the branch times at which a 1 is added to the Ulam-Harris names of an edge. A branch $b$ consists then of the edges between branching events. 
  See Figure~\ref{fig:antonnotation} for a translation between those notations in a concrete example.
\end{remark}
\FloatBarrier
\begin{figure}[h]
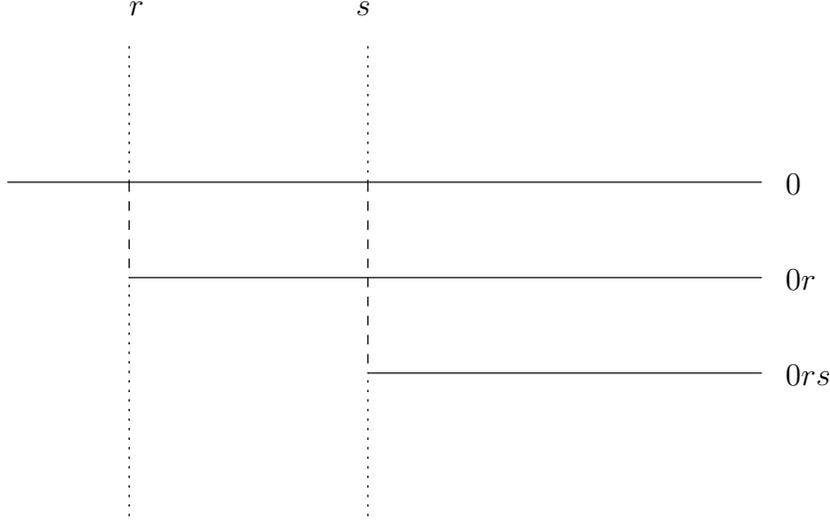

  \centering
  \scalebox{1}{{\pgfkeys{/pgf/fpu/.try=false}%
\ifx\XFigwidth\undefined\dimen1=0pt\else\dimen1\XFigwidth\fi
\divide\dimen1 by 5739
\ifx\XFigheight\undefined\dimen3=0pt\else\dimen3\XFigheight\fi
\divide\dimen3 by 3999
\ifdim\dimen1=0pt\ifdim\dimen3=0pt\dimen1=3946sp\dimen3\dimen1
  \else\dimen1\dimen3\fi\else\ifdim\dimen3=0pt\dimen3\dimen1\fi\fi
\tikzpicture[x=+\dimen1, y=+\dimen3]
{\ifx\XFigu\undefined\catcode`\@11
\def\temp{\alloc@1\dimen\dimendef\insc@unt}\temp\XFigu\catcode`\@12\fi}
\XFigu3946sp
\ifdim\XFigu<0pt\XFigu-\XFigu\fi
\clip(5673,-7062) rectangle (11412,-3063);
\tikzset{inner sep=+0pt, outer sep=+0pt}
\pgfsetfillcolor{black}
\pgftext[base,left,at=\pgfqpointxy{10575}{-6075}] {\fontsize{12}{14.4}\usefont{T1}{ptm}{m}{n}$0rs$}
\pgfsetlinewidth{+7.5\XFigu}
\pgfsetstrokecolor{black}
\draw (5685,-4800)--(10425,-4800);
\pgfsetdash{{+15\XFigu}{+45\XFigu}}{+15\XFigu}
\draw (6450,-5400)--(6450,-6900);
\pgfsetdash{{+60\XFigu}{+60\XFigu}}{++0pt}
\draw (6450,-4800)--(6450,-5400);
\draw (7950,-4800)--(7950,-6000);
\pgfsetdash{{+15\XFigu}{+45\XFigu}}{+15\XFigu}
\draw (6450,-3900)--(6450,-4800);
\pgfsetdash{}{+0pt}
\draw (10425,-5400)--(6450,-5400);
\draw (10425,-6000)--(7950,-6000);
\draw (6446,-5400)--(6454,-5400);
\draw (7946,-6000)--(7954,-6000);
\pgfsetdash{{+15\XFigu}{+45\XFigu}}{+15\XFigu}
\draw (7950,-3900)--(7950,-4800);
\draw (7950,-6000)--(7950,-6900);
\pgftext[base,left,at=\pgfqpointxy{6450}{-3750}] {\fontsize{12}{14.4}\usefont{T1}{ptm}{m}{n}$r$}
\pgftext[base,left,at=\pgfqpointxy{7875}{-3750}] {\fontsize{12}{14.4}\usefont{T1}{ptm}{m}{n}$s$}
\pgftext[base,left,at=\pgfqpointxy{10575}{-4875}] {\fontsize{12}{14.4}\usefont{T1}{ptm}{m}{n}$0$}
\pgftext[base,left,at=\pgfqpointxy{10575}{-5475}] {\fontsize{12}{14.4}\usefont{T1}{ptm}{m}{n}$0r$}
\pgfsetstrokecolor{white}
\pgfsetdash{}{+0pt}
\draw (11400,-3075)--(11400,-7050);
\endtikzpicture}%
}
  \caption{ 
   Three branches of the tree $\mathfrak y$ are shown. The ancestral branch is denoted by 0. The branch splitting off from the ancestral branch at time $r$ is denoted by $0r$, and the branch splitting off from the branch $0r$ at time $s$ is denoted by $0rs$. Time runs from left to right, and distances in $\mathfrak y$ are measured horizontally. Each branch $b$ is conceived as a copy of $\R$, with common ancestries being glued together.  
  }
  \label{fig:fig1}
\end{figure}
We are now ready to construct the $n$-th discrete approximation of branching $H$-fractional Brownian motion: 
\begin{enumerate}
    \item Sample a Yule tree $\mathfrak y$.
    \item Sample a HS-model over the discretisation of the branch $0$. More specifically, sample an HS-model over $\Z$, and identify the integer $i\in\Z$ with the point in the tree $(0,i/n)_{\mathfrak y}$. This is done as described in Section~\ref{sec:HSdesc},
    using the branch $0$ instead of the real numbers.
  \item We proceed recursively. Let the $R_{i}^{(b)}$ be independent and have distribution $\mu$. Note that each individual $(b,i)_{(n)}$ has an ancestor at distance $ R_i^{(b)} $ to the left to which it is connected regardless of the branch it lies in.\\
    Formally, assume that the HS model has been sampled on the oldest $m$ branches. Furthermore, assume that $b=s_1\ldots s_{m+1}$ is the $(m+1)$ oldest branch and that its last digit is $s_{m+1}>0$.
    For $i\in n\Z, i \geq \lfloor s_{m+1}n \rfloor$ set $$r:=\max\left\{r\leq m+1: (i-R_i^{(b)}-s_{m+1}n)/n + \lfloor n(s_m-s_r)\rfloor /n >0 \right\}\vee 0 .$$
    The point $\left(b,+i/n\right)_{\mathfrak y}$ is connected to  $\left( s_1\ldots s_r,(i-R_{i}^{(b)})/n\right)_{\mathfrak y}$.
   See Figure~\ref{fig:fig2}.
\end{enumerate}
\begin{figure}
  \centering
      \centering
  \subcaptionbox{An illustration of a branching HS-model and the multiple names an individual can have. \label{fig:fig2a}}{\scalebox{0.85}{{\pgfkeys{/pgf/fpu/.try=false}%
\ifx\XFigwidth\undefined\dimen1=0pt\else\dimen1\XFigwidth\fi
\divide\dimen1 by 7032
\ifx\XFigheight\undefined\dimen3=0pt\else\dimen3\XFigheight\fi
\divide\dimen3 by 5196
\ifdim\dimen1=0pt\ifdim\dimen3=0pt\dimen1=3946sp\dimen3\dimen1
  \else\dimen1\dimen3\fi\else\ifdim\dimen3=0pt\dimen3\dimen1\fi\fi
\tikzpicture[x=+\dimen1, y=+\dimen3]
{\ifx\XFigu\undefined\catcode`\@11
\def\temp{\alloc@1\dimen\dimendef\insc@unt}\temp\XFigu\catcode`\@12\fi}
\XFigu3946sp
\ifdim\XFigu<0pt\XFigu-\XFigu\fi
\pgfdeclarearrow{
  name = xfiga0,
  parameters = {
    \the\pgfarrowlinewidth \the\pgfarrowlength \the\pgfarrowwidth},
  defaults = {
	  line width=+7.5\XFigu, length=+120\XFigu, width=+60\XFigu},
  setup code = {
    \dimen7 2.15\pgfarrowlength\pgfmathveclen{\the\dimen7}{\the\pgfarrowwidth}
    \dimen7 2\pgfarrowwidth\pgfmathdivide{\pgfmathresult}{\the\dimen7}
    \dimen7 \pgfmathresult\pgfarrowlinewidth
    \pgfarrowssettipend{+\dimen7}
    \pgfarrowssetbackend{+-\pgfarrowlength}
    \dimen9 -0.5\pgfarrowlinewidth
    \pgfarrowssetvisualbackend{+\dimen9}
    \pgfarrowssetlineend{+-0.5\pgfarrowlinewidth}
    \pgfarrowshullpoint{+\dimen7}{+0pt}
    \pgfarrowsupperhullpoint{+-\pgfarrowlength}{+0.5\pgfarrowwidth}
    \pgfarrowssavethe\pgfarrowlinewidth
    \pgfarrowssavethe\pgfarrowlength
    \pgfarrowssavethe\pgfarrowwidth
  },
  drawing code = {\pgfsetdash{}{+0pt}
    \ifdim\pgfarrowlinewidth=\pgflinewidth\else\pgfsetlinewidth{+\pgfarrowlinewidth}\fi
    \pgfpathmoveto{\pgfqpoint{-\pgfarrowlength}{0.5\pgfarrowwidth}}
    \pgfpathlineto{\pgfqpoint{0pt}{0pt}}
    \pgfpathlineto{\pgfqpoint{-\pgfarrowlength}{-0.5\pgfarrowwidth}}
    \pgfusepathqstroke
  }
}
\clip(3210,-9012) rectangle (10242,-3816);
\tikzset{inner sep=+0pt, outer sep=+0pt}
\pgfsetfillcolor{black}
\pgftext[base,left,at=\pgfqpointxy{6450}{-3975}] {\fontsize{12}{14.4}\usefont{T1}{ptm}{m}{n}$sn$}
\pgfsetlinewidth{+7.5\XFigu}
\pgfsetcolor{black}
\filldraw  (7800,-5400) circle [radius=+75];
\filldraw  (9000,-5400) circle [radius=+75];
\filldraw  (9000,-6600) circle [radius=+75];
\filldraw  (9000,-7800) circle [radius=+75];
\filldraw  (6525,-6600) circle [radius=+75];
\filldraw  (5400,-5400) circle [radius=+75];
\filldraw  (7800,-6600) circle [radius=+75];
\pgfsetroundcap
\pgfsetdash{{+15\XFigu}{+45\XFigu}}{+15\XFigu}
\filldraw  (6525,-5400) circle [radius=+75];
\pgfsetbuttcap
\pgfsetdash{}{+0pt}
\draw (7800,-7800)--(9000,-7800);
\pgfsetdash{{+15\XFigu}{+45\XFigu}}{+15\XFigu}
\draw (9000,-4200)--(9000,-9000);
\draw (7800,-4200)--(7800,-6600);
\pgfsetstrokecolor{white}
\pgfsetarrows{[line width=7.5\XFigu]}
\pgfsetarrowsend{xfiga0}
\draw (10200,-4200)--(10200,-8400);
\pgfsetstrokecolor{black}
\pgfsetarrowsend{}
\draw (4200,-4200)--(4200,-9000);
\pgfsetdash{}{+0pt}
\draw (4200,-5400)--(4800,-5400)--(9000,-5400);
\pgfsetdash{{+15\XFigu}{+45\XFigu}}{+15\XFigu}
\draw (6525,-4200)--(6525,-9000);
\draw (5400,-4200)--(5400,-5400);
\pgfsetdash{}{+0pt}
\draw (5400,-6600)--(9000,-6600);
\pgfsetdash{{+60\XFigu}{+60\XFigu}}{++0pt}
\draw (5400,-5400)--(5400,-6600);
\draw (7800,-6600)--(7800,-7800);
\pgfsetdash{{+15\XFigu}{+45\XFigu}}{+15\XFigu}
\draw (5400,-6600)--(5400,-9000);
\draw (7800,-7800)--(7800,-9000);
\pgftext[base,left,at=\pgfqpointxy{4050}{-3975}] {\fontsize{12}{14.4}\usefont{T1}{ptm}{m}{n}$0$}
\pgftext[base,left,at=\pgfqpointxy{5400}{-3975}] {\fontsize{12}{14.4}\usefont{T1}{ptm}{m}{n}$rn$}
\pgftext[base,left,at=\pgfqpointxy{3225}{-5250}] {\fontsize{12}{14.4}\usefont{T1}{ptm}{m}{n}$(0,0)=(0r, 0)=\ldots$}
\pgftext[base,left,at=\pgfqpointxy{7800}{-3975}] {\fontsize{12}{14.4}\usefont{T1}{ptm}{m}{n}$tn$}
\pgftext[base,left,at=\pgfqpointxy{8850}{-3975}] {\fontsize{12}{14.4}\usefont{T1}{ptm}{m}{n}$un$}
\pgftext[base,left,at=\pgfqpointxy{7875}{-5250}] {\fontsize{12}{14.4}\usefont{T1}{ptm}{m}{n}$(0,tn)$}
\pgftext[base,left,at=\pgfqpointxy{9075}{-5250}] {\fontsize{12}{14.4}\usefont{T1}{ptm}{m}{n}$(0,rn)$}
\pgftext[base,left,at=\pgfqpointxy{7050}{-6375}] {\fontsize{12}{14.4}\usefont{T1}{ptm}{m}{n}$(0r, rn)=(0rt, tn)$}
\pgftext[base,left,at=\pgfqpointxy{9150}{-6675}] {\fontsize{12}{14.4}\usefont{T1}{ptm}{m}{n}$(0r, un)$}
\pgftext[base,left,at=\pgfqpointxy{9150}{-7875}] {\fontsize{12}{14.4}\usefont{T1}{ptm}{m}{n}$(0rt, un)$}
\pgftext[base,left,at=\pgfqpointxy{5700}{-6900}] {\fontsize{12}{14.4}\usefont{T1}{ptm}{m}{n}$(0r, sn)=(0rt, sn)$}
\pgftext[base,left,at=\pgfqpointxy{6600}{-5700}] {\fontsize{12}{14.4}\usefont{T1}{ptm}{m}{n}$(0, sn)$}
\pgftext[base,left,at=\pgfqpointxy{5025}{-5250}] {\fontsize{12}{14.4}\usefont{T1}{ptm}{m}{n}$(0, rn)=(0r, rn)$}
\pgfsetdash{}{+0pt}
\filldraw  (4200,-5400) circle [radius=+75];
\endtikzpicture}
  
\centering
\centering
  \subcaptionbox{An illustration of a branching HS-model in which we follow three ancestral lines. Two of them coalesce. Red and purple belong to the same component. \label{fig:fig2b}}{\scalebox{0.85}{{\pgfkeys{/pgf/fpu/.try=false}%
\ifx\XFigwidth\undefined\dimen1=0pt\else\dimen1\XFigwidth\fi
\divide\dimen1 by 4977
\ifx\XFigheight\undefined\dimen3=0pt\else\dimen3\XFigheight\fi
\divide\dimen3 by 5196
\ifdim\dimen1=0pt\ifdim\dimen3=0pt\dimen1=3946sp\dimen3\dimen1
  \else\dimen1\dimen3\fi\else\ifdim\dimen3=0pt\dimen3\dimen1\fi\fi
\tikzpicture[x=+\dimen1, y=+\dimen3]
{\ifx\XFigu\undefined\catcode`\@11
\def\temp{\alloc@1\dimen\dimendef\insc@unt}\temp\XFigu\catcode`\@12\fi}
\XFigu3946sp
\ifdim\XFigu<0pt\XFigu-\XFigu\fi
\catcode`\@11
\pgfutil@ifundefined{pgf@pattern@name@xfigp0}{
\pgfdeclarepatternformonly{xfigp0}
{\pgfqpoint{-1bp}{-1bp}}{\pgfqpoint{9bp}{5bp}}{\pgfqpoint{8bp}{4bp}}
{	\pgfsetdash{}{0pt}\pgfsetlinewidth{0.45bp}
	\pgfpathqmoveto{-1bp}{4.5bp}\pgfpathqlineto{9bp}{-0.5bp}
	\pgfusepathqstroke
}
}{}
\catcode`\@12
\pgfdeclarearrow{
  name = xfiga0,
  parameters = {
    \the\pgfarrowlinewidth \the\pgfarrowlength \the\pgfarrowwidth},
  defaults = {
	  line width=+7.5\XFigu, length=+120\XFigu, width=+60\XFigu},
  setup code = {
    \dimen7 2.15\pgfarrowlength\pgfmathveclen{\the\dimen7}{\the\pgfarrowwidth}
    \dimen7 2\pgfarrowwidth\pgfmathdivide{\pgfmathresult}{\the\dimen7}
    \dimen7 \pgfmathresult\pgfarrowlinewidth
    \pgfarrowssettipend{+\dimen7}
    \pgfarrowssetbackend{+-\pgfarrowlength}
    \dimen9 -0.5\pgfarrowlinewidth
    \pgfarrowssetvisualbackend{+\dimen9}
    \pgfarrowssetlineend{+-0.5\pgfarrowlinewidth}
    \pgfarrowshullpoint{+\dimen7}{+0pt}
    \pgfarrowsupperhullpoint{+-\pgfarrowlength}{+0.5\pgfarrowwidth}
    \pgfarrowssavethe\pgfarrowlinewidth
    \pgfarrowssavethe\pgfarrowlength
    \pgfarrowssavethe\pgfarrowwidth
  },
  drawing code = {\pgfsetdash{}{+0pt}
    \ifdim\pgfarrowlinewidth=\pgflinewidth\else\pgfsetlinewidth{+\pgfarrowlinewidth}\fi
    \pgfpathmoveto{\pgfqpoint{-\pgfarrowlength}{0.5\pgfarrowwidth}}
    \pgfpathlineto{\pgfqpoint{0pt}{0pt}}
    \pgfpathlineto{\pgfqpoint{-\pgfarrowlength}{-0.5\pgfarrowwidth}}
    \pgfusepathqstroke
  }
}
\definecolor{green1}{rgb}{0,0.56,0}
\clip(4035,-9012) rectangle (9012,-3816);
\tikzset{inner sep=+0pt, outer sep=+0pt}
\pgfsetarrows{[line width=7.5\XFigu]}
\pgfsetarrowsend{xfiga0}
\pgfsetlinewidth{+7.5\XFigu}
\pgfsetdash{}{+0pt}
\pgfsetstrokecolor{blue}
\draw (8550,-5400) arc[start angle=+33.69, end angle=+146.31, radius=+676];
\draw (7425,-5400) arc[start angle=+8.1, end angle=+171.9, radius=+265.2];
\draw (6900,-5400) arc[start angle=+33.69, end angle=+146.31, radius=+676];
\draw (5775,-5400) arc[start angle=+39.47, end angle=+140.53, radius=+825.9];
\pgfsetstrokecolor{red}
\draw (8100,-6600) arc[start angle=+-59.12, end angle=+-120.88, radius=+2119.1];
\draw (5925,-6600) arc[start angle=+270.95, end angle=+171.42, radius=+1044.4];
\pgfsetstrokecolor{magenta}
\draw (8775,-7800) arc[start angle=+-22.2, end angle=+-157.8, radius=+364.4];
\draw (8100,-7800) arc[start angle=+-98.02, end angle=+-175.68, radius=+1311.7];
\draw (6975,-6600) arc[start angle=+-27.38, end angle=+-152.62, radius=+591.2];
\pgfsetdash{}{+0pt}
\pgfsetstrokecolor{green1}
\draw (8475,-7800) arc[start angle=+-71.97, end angle=+-167.52, radius=+3266.3];
\pgfsetdash{}{+0pt}
\pgfsetcolor{blue}
\filldraw  (8550,-5400) circle [radius=+75];
\filldraw  (7425,-5400) circle [radius=+75];
\filldraw  (6900,-5400) circle [radius=+75];
\filldraw  (5775,-5400) circle [radius=+75];
\filldraw  (4500,-5400) circle [radius=+75];
\pgfsetcolor{red}
\filldraw  (8100,-6600) circle [radius=+75];
\filldraw  (5925,-6600) circle [radius=+75];
\filldraw  (4875,-5400) circle [radius=+75];
\pgfsetcolor{magenta}
\filldraw  (8775,-7800) circle [radius=+75];
\filldraw  (8100,-7800) circle [radius=+75];
\filldraw  (6975,-6600) circle [radius=+75];
\pgfsetdash{}{+0pt}
\pgfsetstrokecolor{green1}
\pgfsetfillpattern{xfigp0}{green1}
\draw[pattern,preaction={fill=green1}]  (8475,-7800) circle [radius=+75];
\draw[pattern,preaction={fill=green1}]  (4275,-5400) circle [radius=+75];
\pgfsetstrokecolor{black}
\pgfsetarrowsend{}
\draw (7800,-7800)--(9000,-7800);
\draw (5400,-6600)--(9000,-6600);
\pgfsetdash{{+15\XFigu}{+45\XFigu}}{+15\XFigu}
\draw (9000,-4200)--(9000,-9000);
\draw (7800,-4200)--(7800,-6600);
\draw (4200,-4200)--(4200,-9000);
\pgfsetdash{}{+0pt}
\draw (4200,-5400)--(4800,-5400)--(9000,-5400);
\pgfsetdash{{+15\XFigu}{+45\XFigu}}{+15\XFigu}
\draw (5400,-4200)--(5400,-5400);
\pgfsetdash{{+60\XFigu}{+60\XFigu}}{++0pt}
\draw (5400,-5400)--(5400,-6600);
\draw (7800,-6600)--(7800,-7800);
\pgfsetdash{{+15\XFigu}{+45\XFigu}}{+15\XFigu}
\draw (5400,-6600)--(5400,-9000);
\draw (7800,-7800)--(7800,-9000);
\pgfsetfillcolor{black}
\pgftext[base,left,at=\pgfqpointxy{8850}{-3975}] {\fontsize{12}{14.4}\usefont{T1}{ptm}{m}{n}$tn$}
\pgftext[base,left,at=\pgfqpointxy{4050}{-3975}] {\fontsize{12}{14.4}\usefont{T1}{ptm}{m}{n}$0$}
\pgftext[base,left,at=\pgfqpointxy{5400}{-3975}] {\fontsize{12}{14.4}\usefont{T1}{ptm}{m}{n}$rn$}
\pgftext[base,left,at=\pgfqpointxy{7800}{-3975}] {\fontsize{12}{14.4}\usefont{T1}{ptm}{m}{n}$sn$}
\endtikzpicture}
  
\caption{}\label{fig:fig2}
\end{figure}
 Note that the above procedure produces a random graph with almost surely infinitely many connected components, see Figure~\ref{fig:fig2b}.
We assign a type $\pm1$ to each component independently with probability $\frac12$. By $(bs, k)_{(n)}=(bs, k/n)_{\mathfrak y}$ we denote individual number $k\in\Z$  
in the branch $bs$, which branched off from branch $b$ at time $s$, in the $n$-th discrete approximation.
 We will mostly omit the subscripts and just write $(bs,k)$. 
If $k\leq \left\lfloor sn \right\rfloor$ this is the corresponding individual in branch $b$, if $k> \left\lfloor sn \right\rfloor$ it is the $\left[k-\left\lfloor sn \right\rfloor\right]$-th individual after the branch point at which $bs$ branched off  $b$. Especially this means that $(b, 0)$ corresponds to the same individual for all branches $b\in \mathcal B$, see Figure~\ref{fig:fig2a}. 
For an individual $(b,k)$ we denote by $Y_{(b,k)}$ the type of its component, where we often will omit the $b$ if $b=0$. \\

The random walk for the $n$-th discrete approximation along the main branch $0$ is defined by
\begin{equation}\label{eq:Snbranch0}
%
  S_{0}^{(n)}(t):=\frac{1}{c(n)}\left[\sum_{l=1}^{ \left\lfloor tn \right\rfloor } Y_l+ \left[ tn-\left\lfloor tn \right\rfloor  \right] Y_{ \left\lfloor tn \right\rfloor  } + \left[  \left\lceil tn \right\rceil - tn \right]Y_{ \left\lceil tn \right\rceil }\right], \qquad t\geq 0
\end{equation}
for the scaling function
\begin{equation}
  c(n):= \left(  n^{2\alpha+1} \frac{1}{\sum_{l\geq 0}q_l^2}\cdot  \frac{1}{\alpha (2\alpha+1)}\cdot   \frac{ \Gamma(1-2\alpha) }{\Gamma(\alpha)\Gamma(1-\alpha)^3} \right)^{\frac12}.
\end{equation}
Now define the random walk along a branch $bs$ inductively by
\begin{equation}\label{eq:HSRWbranching}
  S_{bs}^{(n)}(t):= \mathbf{1}_{t\leq s}  S_{b}^{(n)}(t) +  \mathbf{1}_{t> s} \left[S_{b}^{(n)}(s) + \tfrac{ \sum_{l=  \lfloor ns \rfloor +1 }^{ \left\lfloor tn \right\rfloor }Y_{(bs,l)} + \left[ tn-\left\lfloor tn \right\rfloor  \right] Y_{ \left(bs, \left\lfloor tn \right\rfloor  \right)} + \left[ \left\lceil tn \right\rceil - tn \right]Y_{\left(bs,   \left\lceil tn \right\rceil\right) } }{c(n)}\right].
\end{equation}
 Observe that this means that for two branches $b$ and $\tilde b$ the processes $S^{(n)}_b$ and $S^{(n)}_{bs}$ are equal till $b\wedge \tilde b$ and share some common memory afterwards.
  \begin{figure}[h]
  \centering
    \subcaptionbox{$n=10$\label{subfigure:1}}{\includegraphics[scale=0.65]{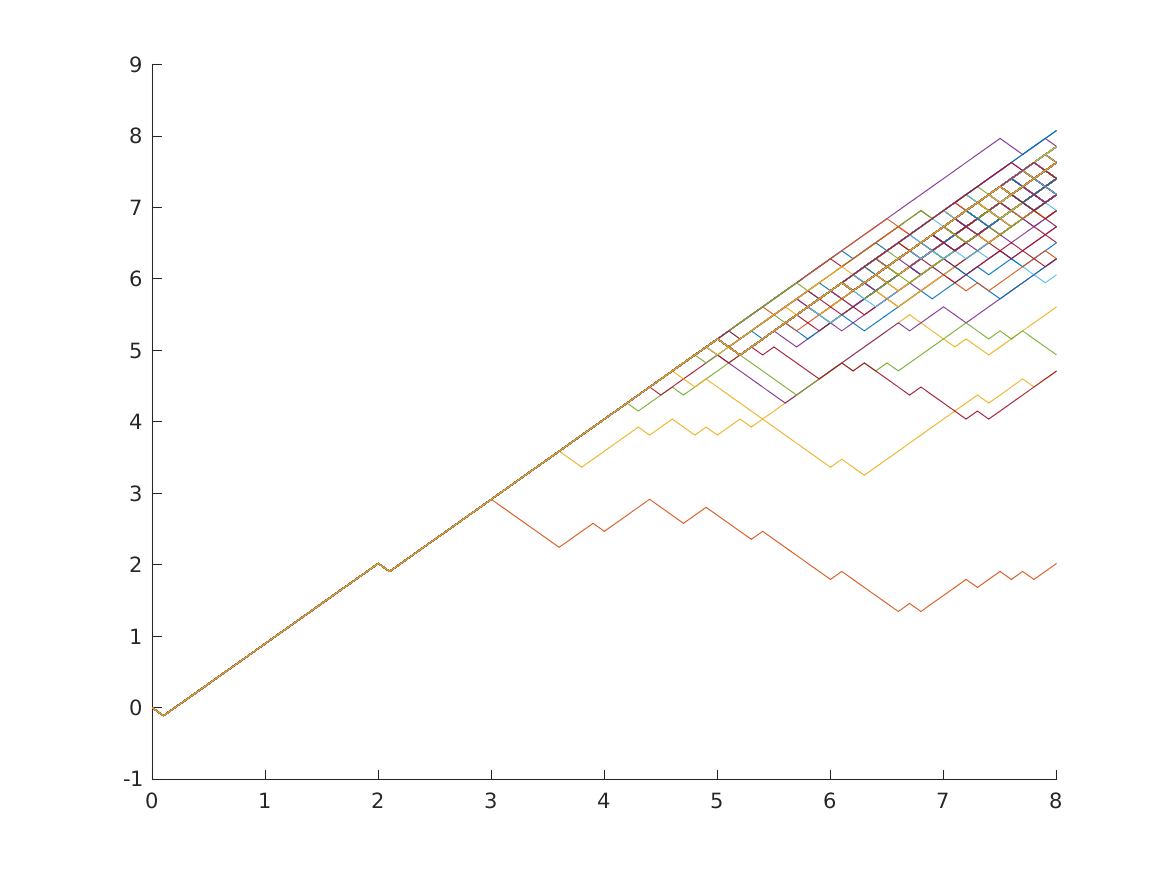}}
 \subcaptionbox{$n=300$\label{subfigure:2}}{
    \includegraphics[scale=0.65]{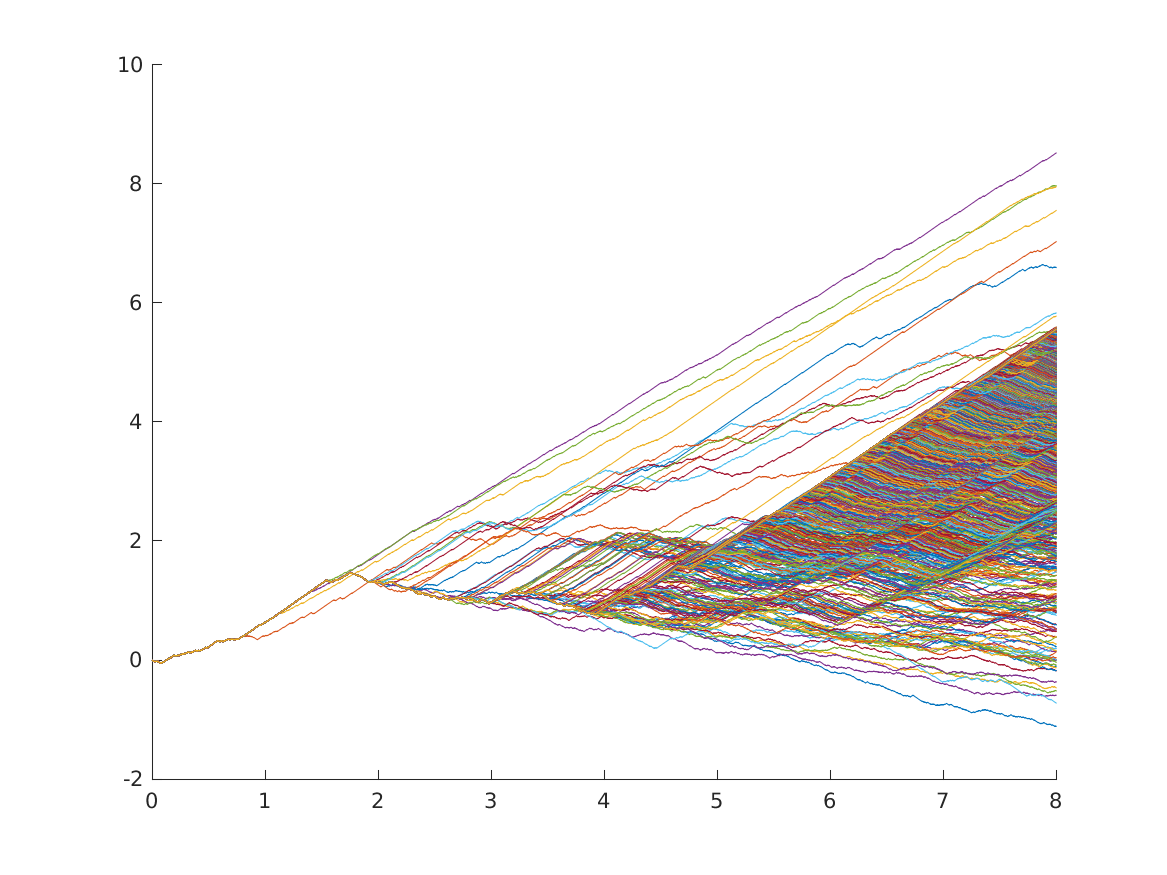}}
  %
  \caption{This is a simulation of $S_{\mathfrak y}^{(n)}:= \left( \left( S_b(t) \right)_{0\leq t \leq 8} \right)_{b\in \mathcal B}$ for $\alpha=0.45$. The $y$-axis is measured in units of $\left( \sum_{l\geq 0}q_l^2 \right)^{-\frac12}$ for $q_l$ defined by \eqref{defqn} and $\alpha = H-\tfrac12$. }
  \label{fig:sT}
\end{figure}
See Figure~\ref{fig:sT} for a simulation using different values of $n$.\\

From now on always denote by $\WS_{ \mathfrak y }, \EW_{ \mathfrak y }$ the law of the  $\mathfrak y$-indexed random walks given the tree $\mathfrak y$. \
Thus we immediately get the following:
\begin{proposition}\label{prop:iwtheorem11cor}
  Let $\mathfrak y \in \mathcal Y$. Then  the sequence of processes $\big(S_{b}^{(n)}\big)_{n\geq 1}$ converges for all branches $b\in \mathcal B$ in distribution to fractional Brownian motion with Hurst-parameter $H=\frac12+ \alpha$ as $n\rightarrow \infty$.
\end{proposition}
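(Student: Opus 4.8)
The plan is to reduce the statement to the one-dimensional convergence result \cite[Theorem~1.1]{IW} by proving that, for \emph{every} fixed branch $b\in\mathcal B$, the process $S_b^{(n)}$ has exactly the same law as the main-branch process $S_0^{(n)}$ from \eqref{eq:Snbranch0}. Once this distributional identity $S_b^{(n)}\stackrel{d}{=}S_0^{(n)}$ is in place, the proposition is immediate: by \cite[Theorem~1.1]{IW} the interpolated Hammond--Sheffield walk of Section~\ref{sec:HSdesc} converges (in the sense of finite-dimensional distributions) to $H$-fractional Brownian motion, and since $c(n)\sim\sigma_n$ by the variance and coalescence asymptotics \eqref{asvar} and \eqref{eq:coalprob}, the same limit holds verbatim for $S_b^{(n)}$.

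First I would make precise the notion of the \emph{spine} of a branch $b=s_1\cdots s_{m+1}$: reading along $b$ from left to right and gluing it to its ancestral branches, the spine is isometric to a copy of $\R$, discretised to $\tfrac1n\Z$, and for $k\in\Z$ I identify the individual $(b,k)_{(n)}$ with spine-coordinate $k$. The content of the recursive construction, in particular the choice of $r$, is precisely that the parent of the individual at spine-coordinate $k$ is the individual at spine-coordinate $k-R_k$, where the increments are i.i.d.\ with law $\mu$, inherited from the ancestral branches for $k$ below the successive split points and sampled freshly as $R_k^{(b)}$ above them. Hence the pointer structure along the spine of $b$ is, in law, exactly the pointer structure $k\mapsto k-R_k$ of the one-dimensional Hammond--Sheffield graph on $\Z$.

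The key step, and the only place where something genuinely has to be checked, is that the \emph{component structure} seen along the spine coincides with the one-dimensional one, not merely the pointer structure. I would argue as follows. Since every vertex has a single outgoing parent-edge pointing strictly to the left, the graph $\mathcal G_\mu$ is a forest, and two vertices are equivalent if and only if their parent-chains eventually meet at a common ancestor. By the previous paragraph the parent of a spine vertex is again a spine vertex, so the parent-chain issued from any spine vertex stays on the spine; consequently two spine individuals are connected in the tree-indexed graph exactly when their spine parent-chains meet, which is the one-dimensional criterion. In particular an off-spine vertex may attach to the component of a spine vertex (when its own chain reaches the spine) but can never merge two distinct spine components, see Figure~\ref{fig:fig2b}. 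Therefore the partition of the spine into components, and hence the type sequence $(Y_{(b,k)})_{k\in\Z}$ obtained by the independent fair-coin colouring, has the same law as $(Y_k)_{k\in\Z}$ in the one-dimensional model. Comparing \eqref{eq:HSRWbranching} with \eqref{eq:Snbranch0} then yields $S_b^{(n)}\stackrel{d}{=}S_0^{(n)}$, as both are the same linear interpolation of the same normalised partial sums of the respective type sequences.

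I expect no serious obstacle beyond this bookkeeping; the one point deserving care is the claim that off-spine connections cannot alter the spine partition, which is exactly what makes the per-branch marginal insensitive to the surrounding tree. If functional rather than finite-dimensional convergence is intended, the identity $S_b^{(n)}\stackrel{d}{=}S_0^{(n)}$ reduces any required tightness for $S_b^{(n)}$ to tightness for the one-dimensional interpolated walk, which follows from the variance asymptotics \eqref{asvar} together with the fourth-moment assumption \eqref{fourth} via a standard Kolmogorov-type estimate.
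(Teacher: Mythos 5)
Your proposal is correct and follows essentially the same route as the paper, whose entire proof is the observation that each branch (read along its spine together with its ancestral branches) is itself a one-dimensional Hammond--Sheffield model, so \cite[Theorem~1.1]{IW} applies directly. Your additional verification that off-spine vertices cannot merge distinct spine components—so the spine partition, and hence the type sequence $(Y_{(b,k)})_{k\in\Z}$, has exactly the one-dimensional law—is precisely the bookkeeping implicit in the paper's one-line argument.
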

This follows directly by \cite[Theorem~1.1]{IW} since each branch itself is a HS-model. 
A first step in showing the validity of our discrete construction is the following proposition.
\begin{proposition}\label{prop:condGauss} 
  Let $\mathfrak y\in \mathcal Y$. For not necessarily different branches $b_1, \ldots, b_m \in \mathcal B$ and $t_1, \ldots, t_m \in \R$ the vector
  \begin{equation}
    \left( S^{(n)}_{b_i}(t_i) \right)_{i=1,\ldots, m}
  \end{equation} 
  is asymptotically jointly Gaussian as $n\rightarrow \infty$. 
\end{proposition}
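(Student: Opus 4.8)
The plan is to prove asymptotic joint Gaussianity by the Cramér--Wold device and then to attack the resulting one-dimensional statement through a central limit theorem \emph{conditional on the branching genealogy}. By Cramér--Wold it suffices to show that for every choice of real coefficients $\lambda_1,\dots,\lambda_m$ the scalar
\[
  T_n := \sum_{i=1}^m \lambda_i\, S^{(n)}_{b_i}(t_i)
\]
converges in distribution to a centered normal law; note $T_n$ is centered under $\WS_{\mathfrak y}$. I would condition on the realisation $G$ of the branching Hammond--Sheffield graph built in items (1)--(3) above, i.e. on the parent choices $\big(R^{(b)}_i\big)$, equivalently on the partition of the individuals $(b,k)_{(n)}$ into connected components. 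Given $G$, the component types $(Y_C)_C$ are i.i.d., centered, and bounded (Rademacher with $p=\tfrac12$), and by \eqref{eq:Snbranch0}--\eqref{eq:HSRWbranching} each $S^{(n)}_{b_i}(t_i)$ is an explicit linear functional of the individuals along the ancestral lineage of $(b_i,t_i)_{\mathfrak y}$. Hence
\[
  T_n \;=\; \frac{1}{c(n)}\sum_{C} W^{(n)}_C\, Y_C ,
\]
where the $G$-measurable weight $W^{(n)}_C$ collects, with the coefficients $\lambda_i$ and the interpolation factors, the total contribution to $T_n$ of all individuals lying in the component $C$.

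Conditionally on $G$ this is a sum of independent centered summands, so by the Lyapunov criterion its conditional law is asymptotically normal as soon as
\[
  \frac{\sum_C \big(W^{(n)}_C\big)^4}{\Big(\sum_C \big(W^{(n)}_C\big)^2\Big)^{2}} \xrightarrow[n\to\infty]{} 0 \quad \text{in } \WS_{\mathfrak y}\text{-probability,}
\]
and in addition the conditional variance $v_n := c(n)^{-2}\sum_C \big(W^{(n)}_C\big)^2$ converges in $\WS_{\mathfrak y}$-probability to a deterministic limit $v=v(\lambda)$. The normalisation $c(n)$ cancels in the Lyapunov ratio, so that condition is purely a statement that no single component carries a weight comparable to the total.

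To verify these two inputs I would argue as in the single-branch analysis of \cite[Theorem~1.1]{IW}. The quantity $\EW_{\mathfrak y}\big[\sum_C (W^{(n)}_C)^2\big]$ is, up to the factors $\lambda_i\lambda_j$, a sum of pair-coalescence probabilities between individuals on the lineages of $(b_i,t_i)_{\mathfrak y}$ and $(b_j,t_j)_{\mathfrak y}$; their growth on a single branch is governed by \eqref{eq:coalprob}, and this gives $\EW_{\mathfrak y}[v_n]\to v$. Concentration of $v_n$ about its mean, and the smallness of the Lyapunov ratio, follow from a bound on the size of the largest component together with four-point coalescence estimates. The genuinely new difficulty compared with \cite{IW} is the \emph{cross-branch} component structure: since each individual connects to an ancestor $R^{(b)}_i$ steps to its left regardless of the branch it occupies, a single component may span several branches, so the moment estimates can no longer be read off a one-dimensional renewal process. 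Establishing the pair-coalescence asymptotics (and the four-point bound) for individuals sitting on \emph{different} branches of $\mathfrak y$ is therefore the crux of the argument, and I expect it to be the main obstacle.

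Finally I would de-condition. Combining the two inputs above, the conditional characteristic function satisfies $\EW_{\mathfrak y}\big[e^{\mathrm i\theta T_n}\mid G\big]\to e^{-\theta^2 v/2}$ in $\WS_{\mathfrak y}$-probability; being bounded by $1$, dominated convergence yields $\EW_{\mathfrak y}\big[e^{\mathrm i\theta T_n}\big]\to e^{-\theta^2 v/2}$, so $T_n\Rightarrow \mathcal N(0,v)$. By Cramér--Wold the vector $\big(S^{(n)}_{b_i}(t_i)\big)_{i=1,\dots,m}$ is asymptotically centered Gaussian, as claimed.
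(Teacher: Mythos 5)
Your strategy---Cram\'er--Wold followed by a quenched Lindeberg/Lyapunov CLT conditional on the genealogy, then de-conditioning via dominated convergence of characteristic functions---is the natural extension of the Hammond--Sheffield/Igelbrink--Wakolbinger method to the tree, and the outer scaffolding (the decomposition $T_n=c(n)^{-1}\sum_C W_C^{(n)}Y_C$, the de-conditioning step) is sound. But there is a genuine gap, and you name it yourself: the whole proof rests on (i) convergence in probability of the conditional variance $v_n$ to a deterministic limit and (ii) vanishing of the Lyapunov ratio, and you establish neither. The cross-branch pair-coalescence asymptotics (which the paper does provide, see Proposition~\ref{prop:coalbranches} and Appendix~\ref{sec:proof:th:dbfbm}) only give the first-moment statement $\EW_{\mathfrak y}[v_n]\to v$; concentration of $v_n$ about its mean requires second moments of $\sum_C \big(W_C^{(n)}\big)^2$, i.e.\ four-point coalescence estimates for individuals spread over several branches, and the Lyapunov ratio requires control of the largest component weight in the branching graph. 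In \cite{IW} the analogous estimates are the technical heart of the single-branch proof; on the tree they are new, and deferring them as ``the main obstacle'' means the proposal is a program that reduces the proposition to unproven estimates, not a proof.

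The paper takes a genuinely different and much softer route that avoids this work entirely. By construction, the tree-indexed walk $S^{(n)}_{\mathfrak y}$ has the conditional independence property \eqref{eq:strongprop}: given the common past, the evolutions on distinct branches after a branch point are conditionally independent. Combining this with the fact that each single branch is an ordinary HS-model---hence asymptotically Gaussian by \cite[Theorem~1.1]{IW}, restated as Proposition~\ref{prop:iwtheorem11cor}---the structural Corollary~\ref{eq:corasymjointGauss} yields joint asymptotic Gaussianity: the joint law factorizes into conditionally independent pieces, each of which is (asymptotically) a Gaussian process conditioned on its past. No new coalescence analysis is needed to get Gaussianity; the cross-branch coalescence computation enters only afterwards, to identify the limiting covariance $\rho$. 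In other words, completing your route would in effect force you to redo the hardest part of \cite{IW} in the branching setting, whereas the conditional independence observation lets you inherit it for free.
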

Proposition~\ref{prop:condGauss} will be shown by the results of Section~\ref{sec:branchingtindexed}~and~\ref{sec:proofthdbfbm}, which contain a notion of "branching property" for processes with memory.
%
%
%
\FloatBarrier
  \section{Main results}
  In this section we will describe the main contributions of this paper, which include a discrete approximation of branching fractional Brownian motion and the derivation of the speed of its maximum. 
  
  %
\subsection{Tree-indexed fractional Brownian motion}\label{sec:bfbm}
Remember the classical construction of fractional Brownian motion already stated in the Introduction, see \eqref{eq:FBM2_introduction}:
Given a Brownian motion $W$, Gaussian white noise on $\R$, we construct a fractional Brownian Motion with Hurst parameter $H$ by setting
\begin{equation}\label{eq:FBM2}
  B(t):= \int_{\R} K(s,t) \dif{W(s)} 
\end{equation}
with
\begin{equation}\label{eq:kernelKst}
  K(s,t):= K^H(s,t):= \frac{1}{C_H}\left[ \mathbf{1}_{ s\leq 0 } \left( (t-s)^{H-\frac12}-(-s)^{H-\frac12} \right) + \mathbf{1}_{t\geq s>0} (t-s)^{H-\frac12} \right], 
\end{equation}
and
\begin{equation}\label{eq:fbmconst}
  C_H:= \left( - \frac{ 2^{-2H} \Gamma(-H) \Gamma\left(H+\frac12\right) }{\sqrt{\pi}}  \right)^{\frac12},
\end{equation}
see \cite[Definition~2.1, Corollary~3.4]{mandelbrotvanness},  where this process is called \emph{reduced fractional Brownian motion}.    Note, that the normalization corresponds to the one used by Samorodnitsky and Taqqu in \cite[Chapter~7.2]{stbook} and differs from the one used in \cite{mandelbrotvanness}. 
In the same way we can proceed and construct on top of branching Brownian motion $\left(\left( W_{b}(t) \right)_{[0,T]}\right)_{b\in \mathcal B}$, Gaussian white noise on an $\R$-tree $\mathfrak y\in \mathcal T$, fractional branching Brownian motion:\FloatBarrier
\begin{definition} Let $\left(\left( W_{b}(t) \right)_{[0,T]}\right)_{b\in \mathcal B}$be Gaussian white noise on an $\R$-tree $\mathfrak y\in \mathcal Y$. The process
    $\left(\left( B_{b}(t) \right)_{[0,T]}\right)_{b\in \mathcal B}$ defined by
\begin{equation}\label{eq:bbfbmkernel}
  B_{b}(t):= \int_{\R} K(s,t) \dif{W_b(s)}
\end{equation}
is called \emph{$\mathfrak y$-indexed branching fractional Brownian motion} (BFBM).
\end{definition}
 Note that the above definition corresponds to the process defined in \eqref{eq:BFBM2_introduction} and the construction of \cite[Section~6]{treeGaussianLLN}.
If one now computes the covariance
\begin{equation}
  \Cov\left[ B_{b}\left(t_1\right), B_{\tilde b}\left(t_2\right) \right]
\end{equation}
one obtains for branches $b$ and $\tilde b$ with $b\wedge \tilde b = s$
\begin{eqnarray}
  &&\rho^{\mathrm{K}}\left(t_1, t_2, s\right)\nonumber\\
  &:=& \EW_{ \mathfrak y }\left[ B_{b}(t_1)B_{\tilde b}(t_2) \right]\nonumber\\
  &=&\frac{1}{C_H^2}\EW_{ \mathfrak y }\left[ \left( \int_{-\infty}^0 (t_1-\xi)^{H-\frac12}-(-\xi)^{H-\frac12} \dif{B_{b}(\xi)} + \int_0^{t_1} (t_1-\xi)^{H-\frac12} \dif{B_b(\xi)}  \right) \right.\nonumber\\
  &&\left. \cdot \left( \int_{-\infty}^0 (t_2-\xi)^{H-\frac12}-(-\xi)^{H-\frac12} \dif{B_{\tilde b}(\xi)} + \int_0^{t_2} (t_2-\xi)^{H-\frac12} \dif{B_{\tilde b}(\xi)}  \right)\right]\nonumber\\
  &=&  \frac{1}{C_H^2}\left(\int_{-\infty}^0 \left((t_1-\xi)^{H-\frac12}-(-\xi)^{H-\frac12}  \right)\left((t_2-\xi)^{H-\frac12}-(-\xi)^{H-\frac12}  \right) \dif{\xi} \right) \nonumber\\
  && + \frac{1}{C_H^2}\left( \int_0^{s} (t_1-\xi)^{H-\frac12} (t_2-\xi)^{H-\frac12} \dif{\xi}   \right).\label{eq:rhoKtt}
\end{eqnarray}
 For $t_1=t_2 \equiv t > s$ this gives
\begin{eqnarray}
  &&\rho^{\mathrm{K}}\left( t, t, s \right)\nonumber\\
  &=&  \frac{1}{C_H^2} \left(\int_{-\infty}^0 \left((t-\xi)^{H-\frac12}-(-\xi)^{H-\frac12}  \right)^2 \dif{\xi} \right) + \frac{1}{C_H^2} \left( \int_0^{s} (t-\xi)^{2H-1}  \dif{\xi}   \right)\nonumber\\
  &=& t^{2H} -(t-s)^{2H} \frac{ \sqrt{\pi} 2^{2H-1} }{ \Gamma(1-H)  \Gamma\left(H+\frac12\right) }=:t^{2H} -C_{\rho}(t-s)^{2H} \label{eq:formcov}
\end{eqnarray}
 Observe that the prefactor one of $t^{2H}$ in \eqref{eq:formcov} is due to the normalization used in \eqref{eq:kernelKst}.

\subsection{A discrete approximation via tree-indexed Hammond-Sheffield random walks}
\label{sec:bfbmdiscrete}
Let us now discuss the discrete approximation \eqref{eq:HSRWbranching} to branching fractional Brownian motion: \FloatBarrier
\begin{theorem}\label{th:dbfbm}
  Let $T>0$. 
  Let $\mathfrak y \in \mathcal Y$. Then
  \begin{equation}
   S_{\mathfrak y}^{(n)}:= \left( \left( S_b^{(n)}(t) \right)_{ [0,T] } \right)_{b\in \mathcal B}
  \end{equation}
  converges in distribution to BFBM.
\end{theorem}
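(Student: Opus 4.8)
The plan is to prove convergence in distribution of the process-valued random variable $S_{\mathfrak y}^{(n)}$ to BFBM by establishing two ingredients in the standard manner: convergence of finite-dimensional distributions and tightness. Since BFBM is a Gaussian process indexed by the (fixed) tree $\mathfrak y$, its law is determined by its covariance structure \eqref{eq:rhoKtt}, so the crux is to identify the limiting finite-dimensional distributions as Gaussian with exactly that covariance. Proposition~\ref{prop:condGauss} already provides asymptotic joint Gaussianity of any finite collection $\big(S_{b_i}^{(n)}(t_i)\big)_{i=1,\ldots,m}$, so the remaining task for the finite-dimensional convergence is purely a \emph{covariance computation}: I would show that for branches $b,\tilde b$ with $b\wedge\tilde b = s$ one has
\begin{equation}
  \Cov\big[S_b^{(n)}(t_1),\, S_{\tilde b}^{(n)}(t_2)\big] \xrightarrow{n\to\infty} \rho^{\mathrm K}(t_1,t_2,s),
\end{equation}
with $\rho^{\mathrm K}$ as in \eqref{eq:rhoKtt}. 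Proposition~\ref{prop:iwtheorem11cor} handles the single-branch case $b=\tilde b$, giving the diagonal FBM covariance $\tfrac12\big(t_1^{2H}+t_2^{2H}-|t_1-t_2|^{2H}\big)$; so the genuinely new content is the off-diagonal/cross-branch covariance.

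First I would reduce the cross-branch covariance to the pair-coalescence structure of the underlying tree-indexed Hammond--Sheffield graph. By the recursive definition \eqref{eq:HSRWbranching}, two branches $b,\tilde b$ with $b\wedge\tilde b=s$ evolve identically up to time $s$ and use independent fresh increments thereafter, except for the shared memory inherited through ancestors lying to the left of the branch point. Concretely, using \eqref{sigman}-type bookkeeping,
\begin{equation}
  \Cov\big[S_b^{(n)}(t_1),S_{\tilde b}^{(n)}(t_2)\big] = \frac{\mathbf E[Y^2]}{c(n)^2}\sum_{i,j} \WS_{\mathfrak y}\big((b,i)\sim(\tilde b,j)\big),
\end{equation}
where the sum runs over the relevant index ranges along the two branches. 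Two individuals on different branches can only be in the same component if their ancestral lineages coalesce into the common portion $(0,s]$; this is precisely the event illustrated in Figure~\ref{fig:fig2b}. The key asymptotic input is the pair coalescence estimate \eqref{eq:coalprob}, $\WS(0\sim n)\sim c\,n^{2\alpha-1}$, together with the scaling \eqref{asvar} and the definition of $c(n)$. I would split the double sum into a "common ancestry before $s$'' part, which reproduces the moving-average integral $\int_0^s(t_1-\xi)^{H-1/2}(t_2-\xi)^{H-1/2}\dif\xi$ after rescaling $i,j\mapsto \xi n$ and invoking the renewal/coalescence asymptotics, and a "shared distant past'' part (both lineages reaching back to $\xi\le 0$), which yields the $\int_{-\infty}^0$ term. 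Matching the constants then gives exactly \eqref{eq:rhoKtt}; the normalization built into $c(n)$ and the constant $C_H$ in \eqref{eq:fbmconst} are arranged so the prefactors agree, as already checked for $t_1=t_2$ in \eqref{eq:formcov}.

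For tightness, since the index set is the fixed tree $\mathfrak y$ and each branch is a copy of $[0,T]$, I would work branch by branch in the space $C[0,T]$ and then combine finitely many branches (on a compact time window $[0,T]$ only finitely many branches of a Yule realisation are relevant, up to a negligible tail). On a single branch, tightness follows from Proposition~\ref{prop:iwtheorem11cor} and the Kolmogorov criterion: the fourth-moment bound afforded by \eqref{fourth} controls $\EW_{\mathfrak y}\big[|S_b^{(n)}(t)-S_b^{(n)}(s)|^4\big]$ by a constant times $|t-s|^{2H}$ via the Gaussian-type moment estimates underlying the CLT in \cite{IW}. The main obstacle is the cross-branch covariance computation: one must carefully track how the double renewal sum decomposes according to whether the coalescence point of the two lineages falls before or after the branch time $s$, and verify that the discrete sums converge to the two integrals in \eqref{eq:rhoKtt} with the correct constants. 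This is where the analytic identities advertised after Theorem~\ref{th:dbfbm} enter, and where the bookkeeping from \cite{IW} on the random genealogy must be adapted from the linear ($\Z$-indexed) setting to the tree-indexed one.
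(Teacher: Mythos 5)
Your proposal is correct in outline, but it identifies the limit with BFBM by a genuinely different mechanism than the paper. The paper's proof is abstract: Proposition~\ref{prop:condGauss} gives asymptotic joint Gaussianity, and Corollary~\ref{cor:uniquedistGauss} --- the statement that the law of a Gaussian $\mathfrak y$-indexed process with the pairwise conditional independence property \eqref{eq:equaldist} is determined by its basic law --- then identifies the limit with the kernel-constructed BFBM, since both are Gaussian $\mathfrak y$-indexed processes satisfying \eqref{eq:equaldist} with basic law FBM (Proposition~\ref{prop:iwtheorem11cor}); tightness is quoted from \cite[Proposition~1.3]{IW}. In particular the paper never matches the discrete covariance against $\rho^{\mathrm K}$ directly: its computation in Appendix~\ref{sec:proof:th:dbfbm} decomposes according to the positions of the two individuals relative to the branch point and lands on the triple-integral form $\rho^{\mathrm{HS}}$ of \eqref{eq:covhsbranching}, and the equality $\rho^{\mathrm{HS}}=\rho^{\mathrm K}$ of \eqref{eq:covequality} is obtained only as a \emph{consequence} of the uniqueness argument (the paper explicitly says this identity is elusive without the stochastic interpretation). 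Your route instead resums the double renewal sum over the location of the shared ancestry, splitting at time $0$, so that after the rescaling $q_m\sim C_q m^{\alpha-1}$ the two integrals $\int_{-\infty}^0$ and $\int_0^s$ of \eqref{eq:rhoKtt} appear directly. This is workable and, if carried out, would in effect give a direct (Fubini-at-the-discrete-level) proof of the identity $\rho^{\mathrm{HS}}=\rho^{\mathrm K}$, i.e.\ of Proposition~\ref{prop:id1}, which the paper only gets indirectly. What the paper's approach buys is that no constant-matching or singular-integral bookkeeping is needed and the argument generalizes; what yours buys is a self-contained covariance-level proof that bypasses Corollary~\ref{cor:uniquedistGauss}.

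Two points need care if you execute your plan. First, your stated key input \eqref{eq:coalprob} is the \emph{unconstrained} pair-coalescence asymptotic along a single branch; for individuals on different branches you need the constrained formula of Proposition~\ref{prop:coalbranches},
\begin{equation}
  \WS_{ \mathfrak y }\left( \left(b,i\right) \sim (\tilde b, j) \right) = \frac{1}{\sum_{l\geq 0}q_l^2} \sum_{ r\geq 0 } q_{ i-sn +r }q_{j-sn+r},
\end{equation}
which is exactly what makes the resummation over the coalescence location possible. Second, lineages of individuals on different branches can coalesce anywhere in $(-\infty,s]$, not only in the ``common portion $(0,s]$'' as you write at one point (and the later phrase ``before or after the branch time $s$'' should read ``before or after time $0$''); your own two-part decomposition, distant past $\xi\leq 0$ plus common stem $(0,s]$, is the correct one, and the prefactor matching via the diagonal case \eqref{eq:formcov} then works because both sides are the same integrals up to multiplicative constants.
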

Note that $S_{\mathfrak y}^{(n)}$ is a random function from the $\R$-tree $\mathfrak y$ into the real numbers $\R$. The above convergence in distribution occurs with respect to the topology induced by uniform convergence. 

The proof can be found in Section~\ref{sec:proofthdbfbm}.\\

Since again pair coalescence probabilities are a main ingredient to understand the structure of the model and its limit we state a simple proposition similar to \cite[Proposition~2.1]{IW}:
\begin{proposition}\label{prop:coalbranches}
  For branches $b$ and $\tilde b$ with $b\wedge \tilde b = s$
  the probability that $\left(b,i\right)$ and $(\tilde b, j)$ (which for $i,j>sn$ lie in the two different branches) lie in the same component of the \emph{branching} HS-model is given by 
  \begin{equation}
    \WS_{ \mathfrak y }\left( \left(b,i\right) \sim (\tilde b, j) \right) = \frac{1}{\sum_{l\geq 0}q_l^2} \sum_{ r \geq (i\wedge j)-sn } q_r q_{r+|j-i|} =\frac{1}{\sum_{l\geq 0}q_l^2} \sum_{ r\geq 0 } q_{ i-sn +r }q_{j-sn+r} 
  \end{equation}
  for $i,j> sn$. 
\end{proposition}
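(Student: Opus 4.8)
The plan is to follow the two ancestral lineages of $(b,i)$ and $(\tilde b,j)$ backward through $\mathfrak y$ and reduce the question to the intersection of two \emph{independent} renewal processes, mirroring the proof of \cite[Proposition~2.1]{IW} with the branch point $b\wedge\tilde b=s$ in the role of the origin. By construction each individual is joined to an ancestor a distance $R\sim\mu$ to its left, with all the $R$'s independent, so the backward lineage of a single individual is a renewal process with increment law $\mu$ running leftward; in particular the lineage of $(b,i)$ visits the common-branch site a distance $m\ge0$ to the left of the branch point exactly when $(i-sn)+m$ is a renewal epoch, an event of probability $q_{(i-sn)+m}$. Writing $a:=i-sn$ and $c:=j-sn$ (both $\ge1$ since $i,j>sn$), I would first record the standard \emph{independence-until-coalescence} observation: advancing the two lineages one site at a time, each step consumes the variable $R$ attached to the lineage's current site, and two lineages sitting at distinct sites consume distinct, hence independent, variables. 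Thus the two lineages are distributed as two genuinely independent leftward renewal processes up to the first site they share, and---since before both lineages have entered the common branch they sit on the two \emph{different} private branches---that first shared site, if any, must lie in the common branch. Hence $(b,i)\sim(\tilde b,j)$ if and only if two independent renewal sets $\mathcal R^{(1)},\mathcal R^{(2)}$ (started at $0$) satisfy $a+m\in\mathcal R^{(1)}$ and $c+m\in\mathcal R^{(2)}$ for some $m\ge0$.

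It then remains to compute this intersection probability. Let $N:=\#\{m\ge0:\ a+m\in\mathcal R^{(1)},\ c+m\in\mathcal R^{(2)}\}$ be the number of common distances. Independence of $\mathcal R^{(1)},\mathcal R^{(2)}$ gives $\WS(a+m\in\mathcal R^{(1)},\,c+m\in\mathcal R^{(2)})=q_{a+m}q_{c+m}$, so $\EW[N]=\sum_{m\ge0}q_{a+m}q_{c+m}$, which is finite for $\alpha\in(0,\tfrac12)$ since $q_{a+m}q_{c+m}$ decays like $m^{2\alpha-2}$. To convert $\EW[N]$ into $\WS(N\ge1)$ I would use a regeneration argument. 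Let $m^\ast$ be the smallest common distance; at $m^\ast$ both renewal processes have a renewal, so by the renewal (strong Markov) property both restart afresh, and the common distances exceeding $m^\ast$ are in bijection with the common epochs $\ge1$ of two \emph{fresh} independent renewal processes started together at $0$. Writing $W:=\#\{l\ge0:\ l\in\tilde{\mathcal R}^{(1)}\cap\tilde{\mathcal R}^{(2)}\}$ for that number of common epochs (so $W\ge1$, counting $l=0$), this shows that conditionally on $\{N\ge1\}$ one has $N\stackrel{d}{=}W$, while $N=0$ otherwise; and the same independence computation gives $\EW[W]=\sum_{l\ge0}q_l^2$.

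Combining these facts via the law of total expectation, $\EW[N]=\WS(N\ge1)\,\EW[W]$, I would obtain
\[
  \WS_{\mathfrak y}\big((b,i)\sim(\tilde b,j)\big)=\WS(N\ge1)=\frac{\EW[N]}{\EW[W]}=\frac{1}{\sum_{l\ge0}q_l^2}\sum_{m\ge0}q_{\,i-sn+m}\,q_{\,j-sn+m},
\]
which is the second expression in the statement; the first follows by reindexing $r=m+\big((i\wedge j)-sn\big)$ and noting $\big((i\wedge j)-sn\big)+|j-i|=(i\vee j)-sn$. I expect the main obstacle to be the careful justification of the reduction in the first paragraph---that the two lineages may be coupled to independent renewal processes up to their first common site, and that this first common site necessarily lies in the shared branch---since the regeneration identity $N\stackrel{d}{=}W$ and the evaluations of $\EW[N]$ and $\EW[W]$ are then routine renewal-theoretic computations.
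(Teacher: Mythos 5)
Your proposal is correct and takes essentially the same approach as the paper's proof in \appD: both replace the coalescing lineages by two independent (non-coalescing) ancestral lines, compute the expected number of shared sites $\sum_{m\ge 0} q_{i-sn+m}\,q_{j-sn+m}$ by independence, and divide by the expected number of common renewal epochs $\sum_{l\ge 0} q_l^2$ obtained by regenerating at the most recent common ancestor. The paper states these two identities more tersely, while you spell out the independence-until-coalescence coupling and the regeneration step, but the argument is the same.
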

The proof can be found in \appD.
This can now be used to compute the covariance structure of the discrete approximations in Theorem~\ref{th:dbfbm}: 
 For branches $b, \tilde b$ with $b\wedge \tilde b=s$ we have
  \begin{eqnarray}
    \Cov\left[ S_{b}^{(n)}\left(t_1\right), S_{\tilde b}^{(n)}\left(t_2\right)  \right] 
    &\sim&  \frac12 \left[ t_1^{2\alpha+1} - (t_1-s)^{2\alpha+1}
         +     t_2^{2\alpha+1} -  (t_2-s)^{2\alpha+1}\right] \nonumber\\
    &&+   \frac{ \int\limits_0^{t_1-s} \int\limits_0^{t_2-s} \int\limits_0^\infty (y_3+y_1)^{\alpha-1} (y_3+y_2)^{\alpha-1} \dif{y_3} \dif{y_2} \dif{y_1} }{ \Gamma(1-2\alpha) \Gamma(\alpha) \Gamma(1-\alpha)^{-1} (\alpha (2\alpha+1))^{-1} }.\nonumber\\
       &=:&\rho^{\mathrm{HS}}\left(t_1, t_2, s\right)\label{eq:covhsbranching}
  \end{eqnarray}
  This formula and its proof, which can be found in \appB, help to identify the sources of positive correlation. 
  We will show that
  the covariance structure  obtained with the discrete approximations and the kernel construction are indeed the same, see Corollary~\ref{cor:uniquedistGauss}. This implies that the two constructions of BFBM lead to the same object.
  
  In particular it turns out that for $H\equiv \alpha + \frac12$
\begin{equation}\label{eq:covequality}
  \rho^{\mathrm{K}}\left(t_1, t_2, s\right) = \rho^{\mathrm{HS}}\left(t_1, t_2, s\right)=: \rho \left(t_1, t_2, s\right)
\end{equation}
for $\rho^{\mathrm{K}}\left(t_1, t_2, s\right)$ defined by  \eqref{eq:rhoKtt}  and $\rho^{\mathrm{HS}}\left(t_1, t_2, s\right)$ defined by \eqref{eq:covhsbranching}.
This equality is elusive  to us   without the stochastic interpretation presented here.
Interesting analytic identities emerge of this and are the content of \appC.
\subsection{A prediction formula for fractional Brownian motion}
To better understand the connection between the representations  \eqref{eq:HSRWbranching} and \eqref{eq:FBM2} let us go back to fractional Brownian motion and provide additional insight on the following result by Gripenberg and Norros:
\begin{proposition}[{\cite[Theorem~3.1]{fBMCondEW}}]\label{prop:condew}
  For a fractional Brownian motion $B$ with Hurst parameter $\tfrac12 < H < 1$ for all $t>0$
  \begin{equation}\label{eq:propcondewintegral}
    \EW\left[ B_t | \sigma\left(B_s, s\leq 0  \right) \right] = \int_{-\infty}^0 g(t, s) \dif{B_s}
  \end{equation}
  with
  \begin{equation}\label{eq:gkernelfbmCondEW}
    g(t, -s) := \frac{ \sin\left(\pi \left(H-\frac12\right)\right) }{\pi} t^{-H+\frac12} \int_0^t \frac{ \xi^{H-\frac12} }{\xi +s} \dif{\xi}.
  \end{equation}
\end{proposition}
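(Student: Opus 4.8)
The plan is to exploit the Gaussianity of $B$ to identify the conditional expectation with an $L^2$-orthogonal projection, and then to pin down the projection kernel by solving the associated normal equations. Since $B$ is a centered Gaussian process, $\EW[B_t \mid \sigma(B_s, s\le 0)]$ coincides with the orthogonal projection of $B_t$ onto the closed linear subspace $\mathcal H_0 := \overline{\operatorname{span}}\{B_s : s\le 0\}\subset L^2$. For $H>\tfrac12$ the increment process of $B$ carries the (distributional) covariance $\EW[\dif B_s\,\dif B_u]=H(2H-1)|s-u|^{2H-2}\,\dif s\,\dif u$, and every element of $\mathcal H_0$ is, in the appropriate completion, a Wiener-type integral $\int_{-\infty}^0\varphi(s)\,\dif B_s$ against this covariance. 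Hence the projection has the form $\hat B_t=\int_{-\infty}^0 g(t,s)\,\dif B_s$, and it remains to determine $g(t,\cdot)$.

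First I would write down the normal equations: $\hat B_t$ is the projection precisely when the prediction error is orthogonal to the past, i.e. $\EW[(B_t-\hat B_t)B_u]=0$ for every $u\le 0$. Using $B_u=-\int_u^0\dif B_r$ together with the increment covariance, this becomes the integral equation
\[
R(t,u)\;=\;\int_{-\infty}^0 g(t,s)\,\Big(-\!\int_u^0 H(2H-1)|s-r|^{2H-2}\,\dif r\Big)\,\dif s,\qquad u\le 0,
\]
where $R(t,u)=\tfrac12(|t|^{2H}+|u|^{2H}-|t-u|^{2H})$. Differentiating once in $u$ strips off one layer and reduces this to a convolution equation on the half-line $(-\infty,0]$ with kernel $|s-u|^{2H-2}$, i.e. a Wiener--Hopf / fractional-integral equation for $g(t,\cdot)$.

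The heart of the argument — and the step I expect to be the main obstacle — is inverting this half-line fractional operator explicitly. The kernel $|s-u|^{2H-2}$ is, up to the factor $H(2H-1)$, the kernel of a Riemann--Liouville fractional integral of order $2H-1$, so the inversion is an Abel-type problem whose solution carries the normalizing constant $\tfrac{1}{\Gamma(H-1/2)\Gamma(3/2-H)}=\tfrac{\sin(\pi(H-1/2))}{\pi}$. Performing the inversion over a half-line rather than an interval is exactly what produces the Cauchy-type kernel $\int_0^t\frac{\xi^{H-1/2}}{\xi+s}\,\dif\xi$ in \eqref{eq:gkernelfbmCondEW}: the prefactor $t^{-H+1/2}$ and the truncation at $\xi=t$ both emerge from matching the behaviour of $R(t,\cdot)$ near $u=0$ with its behaviour far from $u=0$. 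Rather than inverting from scratch, a cleaner route is to \emph{verify} that the candidate $g(t,-s)$ solves the equation, substituting it in and collapsing the resulting double integral by Fubini together with the Euler integral $\int_0^\infty\frac{\xi^{a-1}}{\xi+s}\,\dif\xi=\frac{\pi}{\sin\pi a}\,s^{a-1}$ (valid for $0<a<1$, whose divergence for the exponents at hand is precisely what forces the truncation at $t$) and standard Beta-function identities.

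An alternative, more probabilistic route — consonant with the ``microscopic'' viewpoint emphasized in this paper — is to compute the analogous conditional expectation in the discrete Hammond--Sheffield urn, where $\EW[Y_k\mid\sigma(Y_j,j\le 0)]$ is an explicit finite sum of renewal weights $q_\bullet$, and then pass to the limit $n\to\infty$. The asymptotics $q_n\sim\frac{1}{\Gamma(\alpha)\Gamma(1-\alpha)}n^{\alpha-1}$ from \eqref{qas} should, after rescaling, reproduce the kernel $g$, with the constant $\frac{\sin(\pi\alpha)}{\pi}=\frac{1}{\Gamma(\alpha)\Gamma(1-\alpha)}$ (recall $\alpha=H-\tfrac12$) appearing directly from \eqref{qas}. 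The main obstacle along this route is controlling the convergence of the discrete prediction weights to the continuous kernel with enough uniformity to identify the limit.
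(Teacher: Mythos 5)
Your overall strategy is sound, but your primary route is genuinely different from the one the paper takes. You identify $\EW\left[ B_t \mid \sigma\left(B_s, s\leq 0\right) \right]$ with the $L^2$-orthogonal projection onto $\mathcal H_0=\overline{\operatorname{span}}\left\{B_s : s\leq 0\right\}$, write the normal equations, and reduce them to a half-line fractional (Wiener--Hopf/Abel) equation to be settled by verifying the explicit candidate; this is in essence the classical analytic argument of Gripenberg and Norros \cite{fBMCondEW}, who work from the representation \eqref{eq:FBM2}. The paper's proof in Section~\ref{sec:condew} is instead precisely the ``alternative, more probabilistic route'' you sketch at the end: in the discrete Hammond--Sheffield urn the conditional expectation is exact and finite, $\EW\left[ Y_l \mid \mathcal F_{\leq 0} \right]=\sum_{s\geq 0} Y_{-s}\, b_{l,-s}$, where $b_{l,-k}$ is the probability that the ancestral line of individual $l$ last meets $\Z_{\leq 0}$ at $-k$ (so that the type of $-k$ is copied into $l$); the renewal asymptotics \eqref{qas} together with $\mu(n)\sim\alpha n^{-\alpha-1}$ yield $b_{n,-\xi n}\sim\frac{1}{\Gamma(\alpha)\Gamma(1-\alpha)\,n}\cdot\frac{\xi^{-\alpha}}{1+\xi}$, and summing over $l\leq nt$, controlling the Riemann-sum error (the splitting of the sum at $s=n^{1+\varepsilon}$ is where the paper spends its technical effort), and invoking uniform integrability gives \eqref{eq:propcondewintegral}, with the constant $\sin(\pi\alpha)/\pi=1/\left(\Gamma(\alpha)\Gamma(1-\alpha)\right)$ coming from Euler's reflection formula exactly as you anticipated. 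What each buys: your route stays inside continuum Gaussian theory and needs no renewal structure, but it treats the kernel as an answer to be checked; the paper's route is heavier but exhibits $g$ as a limit of copy probabilities, which is the ``microscopic interpretation'' that is the point of this section.

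Two caveats on your analytic route. First, for $H>\tfrac12$ it is \emph{not} true that every element of $\mathcal H_0$ is a Wiener integral $\int_{-\infty}^0\varphi(s)\,\dif{B_s}$ of a function: the space of such integrands is incomplete in the inner product induced by $|s-u|^{2H-2}$, and its abstract completion contains non-function elements. So you cannot conclude a priori that the projection ``has the form'' $\int g\,\dif{B}$. Your verification variant repairs this: it suffices that the explicit candidate lies in $\mathcal H_0$ (guaranteed by $\int|g|<\infty$ and $\int g^2<\infty$, cf.\ the remark following the proposition) and that the prediction error is orthogonal to every $B_u$, $u\leq 0$; uniqueness of the projection then finishes, and the argument should be presented in that order. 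Second, had you carried the verification through, you would have found that the kernel exactly as printed in \eqref{eq:gkernelfbmCondEW} does not solve the normal equations: the prefactor should be $s^{-H+\frac12}$ (the past variable), not $t^{-H+\frac12}$. Indeed, with $t^{-H+\frac12}$ the kernel decays only like $|s|^{-1}$ as $s\to-\infty$, so $\int_{-\infty}^0|g(t,s)|\,\dif{s}=\infty$ and the cited sufficient condition for the integral in \eqref{eq:propcondewintegral} to exist already fails, whereas with $s^{-H+\frac12}$ it decays like $|s|^{-H-\frac12}$; moreover the form with $s^{-H+\frac12}$ is the one in Gripenberg--Norros's Theorem~3.1 and is exactly what the paper's own proof produces in its final display. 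So \eqref{eq:gkernelfbmCondEW} contains a typo that a fully executed version of your plan would have caught.
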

 
\begin{remark}
   Note, that the integral in \eqref{eq:gkernelfbmCondEW} has no singularity at 0 since since $H>\frac12$. 
  For the same reason we have that for all $t\geq 0$ the integrals $\int_{-\infty}^0 |g(t,s)| \dif{s}$ and $\int_{-\infty}^0 |g(t,s)^2| \dif{s}$ are finite. By \cite[p.~404]{fBMCondEW} this is enough for the integral in \eqref{eq:propcondewintegral} to exist. 
\end{remark}
 
The proof technique in \cite{fBMCondEW} consists in using the representation formulae \eqref{eq:FBM2}, see \cite[Proof of Theorem~3.1]{fBMCondEW}. 
The HS-model allows to understand the kernel $g$ in \eqref{eq:gkernelfbmCondEW} as the probability that a certain \emph{increment from the past will get copied into the present}. See Section~\ref{sec:condew} for a proof of Proposition~\ref{prop:condew} which makes use of this.\\

\subsection{The maximal displacement of branching fractional Brownian motion}\label{sec:maxbfbm}
In the following we identify the leading order of $ \max\limits_{b\in \mathcal B} B_{b}(t)$.
\begin{theorem}\label{eq:THmax} Let $\mathfrak y_{ \mathrm{bin} }$ be a deterministic binary branching tree (every branch branches into two after time 1, see Figure~\ref{fig:fig3}), and recall the definition of $C_H$ from \eqref{eq:fbmconst}. 
  The leading order of the maximum of a BFBM
  \begin{equation}
    \left( \left( B_{b}(t) \right)_{t\geq 0} \right)_{b\in\mathcal B}
  \end{equation}
  with Hurst parameter $H\in\left( \tfrac12,1 \right)$ is
  \begin{equation}\label{eq:mtdef}
    m(t):=
    t^{H+\frac12}\sqrt{ \frac{ \log(2)\sqrt{\pi} 2^{2H+1} H  }{ \Gamma(1-H)  \Gamma\left(H+\frac12\right) \left(H+\frac12\right)^2 } }
    = t^{H+\frac12} \cdot \frac{\sqrt{2\log(2)}}{C_H}\cdot \frac1{(H+\frac12)}
  \end{equation}
  in the sense that
  \begin{equation}
    \EW_{ \mathfrak y_{ \mathrm{bin} } }\left[ \frac{ \max\limits_{b\in \mathcal B} B_{b}(t)  }{m(t) }   \right] \rightarrow 1 \quad \mbox { for }t\rightarrow \infty.
  \end{equation}
\end{theorem}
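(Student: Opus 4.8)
The plan is to compute the leading order of $\max_{b} B_b(t)$ over the deterministic binary tree $\mathfrak y_{\mathrm{bin}}$ by recognizing it as a continuous generalized random energy model (GREM) and invoking the Bovier--Kurkova analysis \cite{GREM2}. First I would discretize the binary branching tree at its branch times $1, 2, \ldots, \lfloor t\rfloor$, so that after time $t$ there are roughly $2^{t}$ branches, giving $\log 2^{t} = t\log 2$ as the entropy available per unit of time. For a single branch $b$, the Gaussian variable $B_b(t)$ is centered with variance $\Varo[B_b(t)] = t^{2H}$ (rescaled so that the prefactor of $t^{2H}$ is $1$, as recorded in \eqref{eq:formcov}). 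The crucial input is the covariance structure \eqref{eq:formcov}: for branches $b,\tilde b$ with $b\wedge\tilde b = s$ we have
\begin{equation}
  \rho^{\mathrm{K}}(t,t,s) = t^{2H} - C_\rho (t-s)^{2H}, \qquad C_\rho = \frac{\sqrt{\pi}\,2^{2H-1}}{\Gamma(1-H)\Gamma(H+\tfrac12)}.
\end{equation}
This shows $B_b(t)$ decomposes into increments whose variance accumulates like a time-change of a GREM with \emph{decreasing} variance profile: two branches that separated recently (large $s$) remain strongly correlated, while those separating early (small $s$) are nearly independent.

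Next I would set up the GREM variance profile explicitly. Write the squared increment of the path variance along a branch between times $s$ and $t$; differentiating $C_\rho(t-s)^{2H}$ in $s$ yields the local variance rate of the ``last common'' contribution, which decreases as $s$ increases because $H > \tfrac12$. This is precisely the continuous-GREM-with-decreasing-variance setting of \cite{GREM2}. The Bovier--Kurkova formula expresses the leading order of the maximum as a variational problem: one maximizes the rate $\sqrt{2\log 2}\cdot\sqrt{A'(x)}$ integrated against the speed of branching, where $A(x)$ is the variance accumulated by ``time fraction'' $x$. Because the variance profile here is concave (a consequence of $H>\tfrac12$, i.e. positively correlated increments), the optimizer is the ``fully spread'' strategy and the variational problem is solved by the straight-line/REM-like configuration, so the maximum equals $\sqrt{2\log 2}$ times the total standard deviation $\sqrt{\Varo[B_b(t)]} = t^{H}$, corrected by the time-change. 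Carrying through the explicit time-change of the accumulated variance produces the factor $\tfrac{1}{H+\frac12}$ and reproduces \eqref{eq:mtdef}, matching
\begin{equation}
  m(t) = t^{H+\frac12}\cdot \frac{\sqrt{2\log 2}}{C_H}\cdot\frac{1}{H+\frac12}.
\end{equation}

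I would verify the constant by a first-moment/second-moment sanity check: the naive REM heuristic $2^{t}\,\WS(\mathcal N(0,t^{2H}) \ge m) \approx 1$ reproduces the leading power $t^{H+1/2}$ and the factor $\sqrt{2\log 2}$, and the relation $C_\rho = 1/(C_H^2 (2H))$ together with $2H = 2\alpha+1$ should convert the $\sqrt{2\log 2}/C_H$ form into the $\Gamma$-function form of \eqref{eq:mtdef}. The convergence in expectation, rather than merely in probability, would follow from a uniform-integrability argument: an upper bound via a union bound plus Gaussian tail estimates controls $\EW_{\mathfrak y_{\mathrm{bin}}}[\max_b B_b(t)]$ from above, while a matching lower bound comes from the second-moment method applied to the number of branches exceeding $(1-\varepsilon)m(t)$, using the decreasing-variance covariance \eqref{eq:formcov} to bound correlations between distinct branches.

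The main obstacle I anticipate is \emph{justifying that the Bovier--Kurkova variational formula applies to this particular continuous time-changed GREM and that the concavity of the variance profile forces the simple (non-degenerate) optimizer}. The subtlety is that \eqref{eq:formcov} is not the covariance of a standard GREM but of a time-changed one, so one must carefully identify the correct ``number of hierarchy levels per unit variance'' and confirm that the relevant Legendre-transform/variational problem from \cite{GREM2} is being fed the right profile. Converting the abstract variational answer into the explicit closed-form constant in \eqref{eq:mtdef}, and checking it reduces to $\sqrt{2}\,t$ at $H=\tfrac12$ as claimed, is the computation where an error is most likely to hide; I would cross-check it against the branching Brownian motion value $\sqrt 2\, t$ in that boundary case as the decisive consistency test.
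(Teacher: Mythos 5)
You adopt the same framework as the paper --- represent $\left( B_b(t) \right)_{b\in\mathcal B}$ as a GREM on the discretized binary tree and invoke Bovier--Kurkova \cite[Theorem~3.1]{GREM2} --- but you misstate the variational formula at the decisive step, and in a way that cannot produce the constant in \eqref{eq:mtdef}. Since the local variance rate $\bar a(x)= 2HC_{\rho}\left(1-x\right)^{2H-1}$ (obtained by differentiating \eqref{eq:formcov} in the splitting time) is \emph{decreasing} for $H>\tfrac12$, the accumulated variance profile is concave and hence equal to its own concave hull; in this case \cite[Theorem~3.1]{GREM2} gives
\begin{equation}
  m(t) \;=\; t^{H+\frac12}\,\sqrt{2\log 2}\int_0^1\sqrt{\bar a(x)}\,\dif{x}
       \;=\; t^{H+\frac12}\,\frac{\sqrt{2\log 2}\,\sqrt{2HC_{\rho}}}{H+\frac12},
\end{equation}
which by strict Cauchy--Schwarz is \emph{strictly smaller} than the REM prediction $\sqrt{2\log 2}\,t^{H+\frac12}$ whenever $H>\tfrac12$. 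Your claim that concavity makes ``the straight-line/REM-like configuration'' optimal, so that ``the maximum equals $\sqrt{2\log 2}$ times the total standard deviation'', is exactly backwards: the straight-line/REM answer $\sqrt{2\log 2}\,\sqrt{\smash[b]{\int_0^1\bar a(x)\dif{x}}}$ is what the concave hull produces for a \emph{convex} (increasing-variance) profile, while in the concave case extremal particles are forced to track the running maximum at every intermediate scale, yielding $\int_0^1\sqrt{\bar a}$ rather than $\sqrt{\int_0^1\bar a}$. No ``time-change correction'' of the total standard deviation generates the factor $\sqrt{2HC_{\rho}}=1/C_H$; it arises only from actually evaluating that integral, which is the computation the paper performs (together with verifying the hypothesis that $\rho(t,t,s)$ is increasing in $s$, i.e.\ $\bar a\ge 0$, see Section~\ref{sec:rhoisinc}).

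Two secondary problems. First, your sanity checks are inconsistent with the correct answer: the naive REM heuristic gives $\sqrt{2\log 2}\,t^{H+\frac12}$, which differs from \eqref{eq:mtdef} by the factor $\sqrt{2HC_{\rho}}/(H+\tfrac12)<1$ for $H>\tfrac12$, so it does \emph{not} reproduce the constant; and your proposed boundary-case test value $\sqrt2\,t$ at $H=\tfrac12$ belongs to the Yule-tree statement (Theorem~\ref{eq:THmax2}) --- for the present deterministic binary tree the correct $H=\tfrac12$ value is $\sqrt{2\log 2}\,t$, cf.\ Remark~\ref{eq:remlog2}. Second, the lower bound you propose --- a second-moment argument on the number of branches whose \emph{terminal} value exceeds $(1-\varepsilon)m(t)$ --- is not valid in the decreasing-variance regime for small $\varepsilon$: the second moment is dominated by strongly correlated pairs with late branching, precisely because $m(t)$ is determined by intermediate-scale constraints rather than terminal entropy. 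One needs a path-constrained (multiscale) second moment, as in the paper's proof of Theorem~\ref{eq:THmax2} where Paley--Zygmund is applied to the event that \emph{every} increment $\Delta Z_b^{(i)}$ is large, or one relies wholesale on \cite{GREM2}, as the paper does for this theorem.
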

See Section~\ref{sec:THmaxproof} for a proof. 
\begin{remark}\label{rem:newremarkanton}
    Arguments in the flavour of Arguin, Bovier and Kistler \cite{MR2838339}, which were continued in work by Kistler and Schmidt \cite{MR3358969}, see
Section~\ref{sec:THmax2proof}, give us a similar statement for branching fractional Brownian motion on Yule trees, see the next theorem. 
    The idea of the proof is to first  show that the maximum of a BFBM can only be attained by a trajectory staying very close to the maximum all along the way. For this purpose we will show exactly this for a series of Generalized random energy models (GREMs) approximating BFBM. Finally we observe that trajectories staying close to the maximum are indeed of the desired order.\\
    In Remark~\ref{rem:refereeproof} we give another proof of Theorem~\ref{eq:THmax2} which combines results of \cite{MR2838339}  with the  Mandelbrot-van-Ness-representation
of fractional Brownian motion in terms of Wiener integrals and  the Payley-Wiener partial integration formula. 
\end{remark}

\begin{theorem}\label{eq:THmax2}
  Let $\mathfrak Y$ be a (binary branching) Yule tree with branching rate $1$ and let $\eta$ denote its distribution. Then
  the leading order of the maximum of a BFBM
  \begin{equation}
    \left( \left( B_{b}(t) \right)_{t\geq 0} \right)_{b\in \mathcal B}
  \end{equation}
  with Hurst parameter $H\in \left( \tfrac12,1 \right)$ 
  is
  \begin{equation}\label{eq:mtdef2}
    m(t):= t^{H+\frac12} \sqrt{ \frac{ \sqrt{\pi} 2^{2H+1} H  }{ \Gamma(1-H)  \Gamma\left(H+\frac12\right) \left(H+\frac12\right)^2 } }
    = t^{H+\frac12} \cdot \frac{\sqrt{2}}{C_H}\cdot \frac1{(H+\frac12)}
  \end{equation}
 for $C_H$ defined by \eqref{eq:fbmconst}
  in the sense that
  \begin{equation}
    \bigintss_{\mathbb{T}} \WS_{ \mathfrak y }\left( \left \vert \frac{ \max\limits_{b\in \mathcal B} B_{b}(t)  }{m(t) } -1 \right\vert > \varepsilon \right)  \eta\left(\dif \mathfrak y\right) \rightarrow 0 \quad \mbox { for }t\rightarrow \infty
  \end{equation}
  and all $\varepsilon>0$. 
\end{theorem}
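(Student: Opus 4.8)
\textbf{Proof plan for Theorem~\ref{eq:THmax2}.}
The plan is to reduce the Yule-tree statement to the deterministic binary-tree result of Theorem~\ref{eq:THmax} by exploiting the connection between BFBM and the GREM picture, following the strategy sketched in Remark~\ref{rem:newremarkanton}. The starting point is the covariance \eqref{eq:formcov}: for two branches with split time $s$ one has $\rho^{\mathrm K}(t,t,s)=t^{2H}-C_\rho(t-s)^{2H}$, so that along the tree the variance accumulates in a way governed by the \emph{overlap} $s=b\wedge\tilde b$. This is precisely the structure of a continuous GREM with decreasing variance: two particles that separated early (small $s$) are weakly correlated, while particles sharing most of their ancestry (large $s$) are strongly correlated. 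The key quantitative input is that, up to the constant $C_\rho=\sqrt\pi 2^{2H-1}/(\Gamma(1-H)\Gamma(H+\tfrac12))$, the ``free energy'' of this GREM at a given slope determines the leading order of the maximum; Bovier and Kurkova's analysis \cite{GREM2} supplies this for the binary tree, giving $m(t)$ as in \eqref{eq:mtdef} with the factor $\sqrt{\log 2}$.

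The main step is to transfer from $\mathfrak y_{\mathrm{bin}}$ to $\mathfrak Y$. First I would argue the upper bound by a first-moment (union) argument: conditionally on $\mathfrak y$, the number of branches alive at time $t$ grows like $e^{t}$ (Yule), so
\begin{equation}
  \EW_{\mathfrak y}\Big[\#\{b:B_b(t)\ge (1+\varepsilon)m(t)\}\Big]
  \le |\mathcal B_t(\mathfrak y)|\,\WS\big(\mathcal N(0,t^{2H})\ge (1+\varepsilon)m(t)\big)\longrightarrow 0,
\end{equation}
after integrating over $\eta$ and using that $|\mathcal B_t|/e^{t}$ is bounded in $L^1$; here the crucial cancellation is that $e^{t}$ replaces $2^{t/\log 2\cdot\log 2}=2^{t}$, which is exactly why the Yule constant in \eqref{eq:mtdef2} loses the $\log 2$ relative to \eqref{eq:mtdef}. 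The delicate part is the matching lower bound, where a naive second moment fails because of the long-range positive correlations. Here I would follow the multiscale/coarse-graining scheme of Kistler and Schmidt \cite{MR3358969} and the truncated second-moment method of \cite{MR2838339}: decompose $[0,t]$ into blocks, restrict to trajectories that stay below the linear barrier $\tfrac{s}{t}m(t)$ at every intermediate split time $s$ (the ``staying close to the maximum all along the way'' of Remark~\ref{rem:newremarkanton}), and show that the conditional second moment of the restricted count is comparable to the square of its first moment.

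The heart of the argument, and the step I expect to be the main obstacle, is controlling this truncated second moment in the presence of the Yule randomness of $\mathfrak y$ itself. Two sources of fluctuation must be separated: the Gaussian fluctuations of $B$ given $\mathfrak y$, and the fluctuations of the genealogy. For the former, the barrier truncation together with the Gaussian correlation structure \eqref{eq:rhoKtt} controls the contribution of pairs of branches indexed by their split time $s$, with the dominant mass coming from recent splits; for the latter, I would use concentration of the Yule branching numbers (via the martingale $e^{-t}|\mathcal B_t|\to W$) to show that the annealed quantity $\int_{\mathbb T}\WS_{\mathfrak y}(\cdot)\,\eta(\dif\mathfrak y)$ is governed by the typical tree, so that the deterministic-tree asymptotics with $e^{t}$ branches carry over. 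Combining the two bounds yields, for every $\varepsilon>0$,
\begin{equation}
  \int_{\mathbb T}\WS_{\mathfrak y}\Big(\Big|\tfrac{\max_{b}B_b(t)}{m(t)}-1\Big|>\varepsilon\Big)\,\eta(\dif\mathfrak y)\longrightarrow 0,
\end{equation}
which is the claim. The alternative route flagged in Remark~\ref{rem:refereeproof}, deducing the result directly from known branching-Brownian-motion asymptotics via the Mandelbrot--van-Ness representation \eqref{eq:BFBM2_introduction} and Paley--Wiener integration by parts, would bypass the GREM machinery entirely and I would keep it as a cross-check on the constant.
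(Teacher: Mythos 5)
Your lower-bound sketch is broadly in line with what the paper actually does (Slepian comparison with a GREM on a time-discretized tree, restriction to branches whose increments all exceed their local targets, Paley--Zygmund, and an upgrade to high probability via Talagrand's concentration inequality, with the Yule randomness handled through the martingale $e^{-t}|\mathcal B_t|$). The genuine gap is in your upper bound. Because the variance profile of this GREM is \emph{decreasing} in the overlap ($\frac{\partial}{\partial s}\rho(t,t,s)=2HC_{\rho}(t-s)^{2H-1}$ decreases in $s$), the true leading order $m(t)$ lies \emph{strictly below} the i.i.d.\ (REM) level: by strict Cauchy--Schwarz,
\begin{equation}
\frac{m(t)}{t^{H+\frac12}}
=\frac{\sqrt 2}{t^{H+\frac12}}\int_0^t\sqrt{2HC_\rho}\,(t-s)^{H-\frac12}\,ds
<\sqrt 2\,\sqrt{\frac{t}{t^{2H+1}}\int_0^t 2HC_\rho\,(t-s)^{2H-1}\,ds}
=\sqrt{2C_\rho}\le\sqrt 2 .
\end{equation}
Hence $m(t)^2/(2t^{2H})=\frac{2HC_\rho}{(H+\frac12)^2}\,t$ with $\frac{2HC_\rho}{(H+\frac12)^2}<1$ for $H\in(\tfrac12,1)$, and your first-moment bound
\begin{equation}
e^{t}\,\WS\Big(\mathcal N(0,t^{2H})\ge(1+\varepsilon)m(t)\Big)
=\exp\Big(t\Big[1-(1+\varepsilon)^2\tfrac{2HC_\rho}{(H+\frac12)^2}\Big](1+o(1))\Big)
\end{equation}
diverges for small $\varepsilon$ instead of vanishing. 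The expected number of branches above $(1+\varepsilon)m(t)$ is exponentially large; what is small is the probability that any such branch exists, and this cannot be detected from the marginal Gaussian tail. Your union bound only recovers the weak estimate \eqref{eq:ThMaxWeak2} at level $\sqrt2\,t^{H+\frac12}$, not the claimed constant.

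This is precisely where the paper spends its effort: the multiscale ``stay close to the maximum all along the way'' mechanism, which you reserved for the lower bound, is indispensable for the \emph{upper} bound in the decreasing-variance regime. The paper shifts branch points to the left, Slepian-compares with a GREM on $K=t^{1-\xi}$ blocks, and then performs a union bound over branches \emph{and} over discretized increment configurations $(l_i)$: using the independence of the increments along a branch, any configuration reaching $(1+\varepsilon)\sum_i\Delta f_i$ must overshoot some local targets $\Delta f_i$, each overshoot carries a quadratic Gaussian penalty, and after Stirling-type counting and concentration of the number of branches this kills the entropy $e^{t}$. Without this path decomposition the constant in \eqref{eq:mtdef2} (in particular the disappearance of $\sqrt{\log 2}$ relative to Theorem~\ref{eq:THmax}) cannot be obtained. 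Note also that the paper does not reduce Theorem~\ref{eq:THmax2} to Theorem~\ref{eq:THmax}: no direct transfer between the two trees is possible since the constants differ, and the proof in Section~\ref{sec:THmax2proof} is self-contained apart from Slepian's Lemma and the concentration inputs.
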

Note that this result means 
\begin{equation}
    \mathcal P \left( \left \vert \frac{ \max\limits_{b\in \mathcal B} B_{b}(t)  }{m(t) } -1 \right\vert > \varepsilon  \right) \rightarrow 0,
\end{equation}
where $\mathcal P$ averages over the Yule tree and the  fBM increments along the branches.\\

\begin{remark}\label{eq:remlog2}
    Observe that the only difference between the formulas \eqref{eq:mtdef2} and \eqref{eq:mtdef2} for the leading order of the maximum $m(t)$ is the factor $\sqrt{\log 2}$. This comes from the fact that the Yule tree  has of order $e^t$ many leaves at time $t$, while the corresponding quantity for the deterministic binary branching tree is  $2^t$.  While this explanation can be made rigorous using the ideas by Kistler and Schmidt \cite{MR3358969}, a proof of Theorem \ref{eq:THmax2} can be given along the following lines: In the situation of the Theorem, the distribution of $\max\limits_{b\in \mathcal B} B_{b}(t)$ can be approximated by a \emph{generalized random energy model} (GREM, see \cite{Derrida}) in which the variance along all branches is decaying. The latter fact together with the Gaussian increments along all branches gives that the \emph{optimal strategy} is to always follow the maximum path. This allows to compute the order of the maximal displacement directly. For details we refer to  Section~\ref{sec:THmax2proof}
\end{remark}

\begin{remark}
    We conjecture that the above results hold almost surely.
\end{remark}
\begin{remark}\label{rem:refereeproof}
The proof of Theorem~\ref{eq:THmax2}, which we will give in Section~\ref{sec:THmax2proof}, is conceptual in the sense that it relates the fractional branching Brownian motion rather directly 
with a generalised random energy model, see Remark~\ref{rem:newremarkanton}. Here we give an alternative proof, which makes use of the Mandelbrot-van-Ness-representation
of fractional Brownian motion in terms of Wiener integrals and transforms these via the Payley-Wiener partial integration formula into weighted Riemann-Integrals of the paths
of branching Brownian motion and controls their asymptotics using results from \cite{MR2838339}:
    Remember that we can represent branching fractional Brownian motion by the Wiener integral
    \begin{equation}\label{eq:FBM2}
  B_b(t):= \int_{\R} \frac{1}{C_H}\left[ \mathbf{1}_{ s\leq 0 } \left( (t-s)^{H-\frac12}-(-s)^{H-\frac12} \right) + \mathbf{1}_{t\geq s>0} (t-s)^{H-\frac12} \right]\dif{W_b(s)}.
\end{equation}
Now note that all branches have the integral over $\R_{\leq 0}$ in common. Writing 
\begin{equation}
    Z_t := \int_{\R_{-}} \frac{1}{C_H} \left( (t-s)^{H-\frac12}-(-s)^{H-\frac12} \right)  \dif{W_0(s)}
\end{equation}
we thus obtain
\begin{equation}\label{eq:btrepresentreferee}
     B_b(t)= Z_t+\int_{0}^t \frac{1}{C_H} (t-s)^{H-\frac12} \dif{W_b(s)}.
\end{equation}
By integration by parts
the latter integral is equal to 
\begin{equation}
   \int_{0}^t \frac{H-\frac12}{C_H} (t-s)^{H-\frac32} W_b(s)\dif{s},
\end{equation}
and similarly 
\begin{equation}
    Z_t = \int_{\R_-} \frac{H-\frac12}{C_H} \left( (t-s)^{H-\frac32}-(-s)^{H-\frac32} \right) W_0(s)\dif{s}.
\end{equation}
Since $W_0(s)$ is just an ordinary Brownian motion $Z_t$ is almost surely at most of order $t^{H}$. Let $\varepsilon>0$, then by \cite[Corollary~2.6]{MR2838339} (see also \cite[Theorem~3.8]{MR3382171} and \cite[Proposition~2.5]{MR3101852}) the integral in \eqref{eq:btrepresentreferee} fulfills
\begin{equation}
    \mathcal P\left( \left\vert \max\limits_{b\in \mathcal B} \frac{\int_{0}^t \frac{H-\frac12}{C_H} (t-s)^{H-\frac32} W_b(s)\dif{s} }{ \int_{0}^t \frac{H-\frac12}{C_H} (t-s)^{H-\frac32} \sqrt{2}s\dif{s}  } -1  \right\vert>\varepsilon\right) \rightarrow0 \mbox{ as }t\rightarrow\infty. 
\end{equation}
Substituting $u=\tfrac{s}{t}$ gives
\begin{eqnarray*}
    &&\frac{H-\frac12}{C_H}\int_{0}^t  (t-s)^{H-\frac32} \sqrt{2}s\dif{s}\\
    &=&t^{H-\frac12}\cdot \sqrt{2}\cdot \frac{H-\frac12}{C_H}\int_0^1  u (1-u)^{H-3/2}  \dif{u} \\
    &=& t^{H+\frac12}\cdot\sqrt{2}\cdot\frac{H-\frac12}{C_H}\int_0^1  (1-u) u^{H-3/2}  \dif{u}  \\
    &=& t^{H+\frac12}\cdot\sqrt{2}\cdot \frac{H-\frac12}{C_H}\left[\int_0^1 u^{H-3/2} \dif{u} - \int_0^1 u^{H-1/2} \dif{u}\right]  \\
    &=& t^{H+\frac12}\cdot\frac{\sqrt{2}}{C_H}\cdot \frac1{(H+\frac12)},
\end{eqnarray*}
which gives the desired result.  

 \cite[Corollary~2.6]{MR2838339} can also be used for a weak result in the second order term since it also gives that the maximum path of branching Brownian motion at time $t$ is of order $\sqrt{s} \wedge \sqrt{t-s} $ smaller than the maximum path at time $s$. So since 
\begin{equation}
    \int_0^t (t-s)^{H-\frac32} \left(\sqrt{s} \wedge \sqrt{t-s}\right)\dif{s}
\end{equation}
is of order $t^H$ this entails that
\begin{equation}
     \limsup\limits_{t\rightarrow\infty} \mathcal P\left(\frac{ \max\limits_{b\in \mathcal B} B_b(t)-t^{H+\frac12}\cdot \frac{\sqrt{2}}{C_H}\cdot \frac1{(H+\frac12)}  }{t^H } >-K\right) \rightarrow1 \mbox{ as }K\rightarrow\infty.
\end{equation}
The next remark suggests that this bound is presumably not sharp. 
\end{remark}
\begin{remark}\label{rem:secondorder}
In \cite{slowdownbbm}
  Maillard and Zeitouni continue the study on the distribution of the maximum of branching Brownian motion with time-inhomogeneous variance, which corresponds to the study of the time-inhomogeneous F-KPP equation. (See also \cite{zeitounifirst}  and \cite{zeitounimiddle} by Fang and Zeitouni as discussed in Remark~\ref{remark:history}.)  They prove that the maximum particle of a branching Brownian motion $W^{\sigma^2, T}$ on $[0,T]$ with time inhomogeneous strictly decreasing variance $\sigma^2\left(\tfrac{s}{T}\right)$ satisfying  $\sigma \in C^2, \inf_t \vert\sigma'(t)\vert>0, \sigma(1)>0$, which can be written as
  \begin{equation}
   W^{\sigma^2, T}(t)= \int_0^t \sigma\left( \frac{s}{T} \right) \dif{W_s} \mbox{ for standard Wiener noise }W\mbox{ and }0\leq t \leq T,
  \end{equation}
   behaves like 
\begin{equation}
    T \cdot \int_0^1 \sigma(s)\dif{s} - T^{\frac13}\cdot 2^{-\frac13} \alpha_1 \int_0^1 \sigma(s)^{\frac13} \left|\sigma'(s)\right|^{\frac23}\dif{s}- \log(T)\cdot \sigma(1),
\end{equation}
where $\alpha_1$ is the largest zero of the Airy function of the first kind.

We can naively fit branching fractional Brownian motion into their setting to obtain a candidate for the order of the second order term of the maximum of fractional branching Brownian motion:   Using Equation \eqref{eq:formcov} we can write 
\begin{eqnarray*}
    \rho(s,t,t)&=& t^{2H} \left[ 1- C_{\rho} \left( 1- \frac{s}{t}\right)^{2H}\right]\\
    &=& t^{2H} C_{\rho} \left[ 1-  \left( 1- \frac{s}{t}\right)^{2H}\right]+ \left[ 1- C_\rho\right] t^{2H}\\
    &=&  t^{2H} C_{\rho}\cdot\left[  \int_0^{s/t} \sigma^2(x)\dif{x} \right]+ \left[ 1- C_\rho\right] t^{2H}
\end{eqnarray*}
for
\begin{equation}
    \sigma^2 (x) =  2H (1-x)^{2H-1}.
\end{equation}
Let now $W^{\sigma^2, T}$ be a Brownian motion on $[0,T]$ with time inhomogeneous variance $\sigma^2\left(\tfrac{s}{T}\right)$ and let $Z$ be an independent normally distributed random variable. Then the above representation of $\rho$ gives the equality in distribution
\begin{equation}
    W^{\sigma^2, T}\left(T\right) + \sqrt{ 1- C_\rho} t^{H}Z  \overset{(d)}{=} B^H(t)
\end{equation}
for $t$ fixed and $T=t^{2H} C_{\rho} $.
This also holds for the branching systems, in the sense that for branching Brownian motion $\left( W^{\sigma^2, T}_b\right)_{b\in \mathcal B}$ with time inhomogeneous variance $\sigma^2\left(\tfrac{s}{T}\right)$ we obtain
\begin{equation}\label{eq:mzequal}
     \max\limits_{b\in \mathcal B} W^{\sigma^2, T}_b\left(T\right) + \sqrt{ 1- C_\rho} t^{H}Z  \overset{(d)}{=} \max\limits_{b\in \mathcal B} B^H(t)_b. 
   \end{equation}
   This is due to the fact that two branches in \cite{slowdownbbm} which have split at time $xT, x\in (0,1)$ have covariance $T \int_0^x \sigma^2(u)\dif{u}$. 
We conjecture that the results  by Maillard and Zeitouni, see \cite[Theorem~1.1]{slowdownbbm}, give the second order term of the speed of the maximum of a BFBM even if the function $\sigma$ is not fulfilling the  conditions $\inf_t |\sigma'(t)|>0$,  $\sigma(1)>0$ and $\sigma \in C^2$ . By \eqref{eq:mzequal} This suggests that the second order term is of order $ t^{\frac23 H }$.  We do not believe that the third order term will be the same as in \cite[Theorem~1.1]{slowdownbbm}, where it is of logarithmic type.
\end{remark}
In the following we give a few remarks concerning the connection between branching random walks and the GREM; for more background see the Lecture Notes by Kistler \cite{MR3380419}.
\begin{remark}\label{remark:history}
A precursor (and a special case) of the GREM is the REM, Random Energy Model, introduced by Derrida in \cite{Derrida}, continued in \cite{PhysRevB.24.2613}.
This is the Gaussian random field $\left(X_{\sigma}\right)_{\sigma =1,\ldots,2^N}$ for independent $X_\sigma$.
An overview and analysis of this can be found in lecture notes by Bolthausen and Sznitman, see \cite{MR1890289}.\\
In \cite{Derrida1985AGO} Derrida then introduced the GREM, a REM with multiple levels, which introduced a hierarchical structure.
For example a GREM with two levels is a correlated random field
$$\left(X_{\sigma}\right)_{\sigma\in \left\{ 1,\ldots, 2^{N/2} \right\}\times \left\{ 1,\ldots, 2^{N/2} \right\} }$$ with
$$X_\sigma \equiv Y_{\sigma_1}^{(1)}+Y_{\sigma_1, \sigma_2}^{(2)}$$
for independent collections of independent random variables $$\left(Y_{\sigma_1}^{(1)}  \right)_{\sigma_1 \in \left\{ 1,\ldots , 2^{N/2} \right\}}\qquad \mbox{ and } \qquad \left(Y_{\sigma_1, \sigma_2}^{(2)}  \right)_{\sigma_1 \in \left\{ 1,\ldots , 2^{N/2} \right\}, \sigma_2 \in \left\{ 1,\ldots , 2^{N/2} \right\}}.$$
The covariance of the random field then depends on the so called overlap. See \cite[Section~2.2]{MR3380419} fore a more formal introduction. 
An analysis of the leading order of the maximum and a general overview can be found in \cite{MR2761979}. \\
In \cite{MR2070334} Bovier and Kurkova give a detailed analysis of GREMs with finitely many hierarchies, which is then extended to continuous hierarchies by \cite{GREM2}. The latter is used by us to prove Theorem~\ref{eq:THmax}. The analysis of \cite{MR2070334} 
 and \cite{GREM2} gives much finer results on the leading order than what one gets by just applying Slepian's Lemma \cite[Section~2.10]{slepian}, see Lemma~\ref{th:slepian}. 
\\
In \cite{MR2209333} Bolthausen and Kistler studied a non-hierarchical version of the GREM and showed that the GREM is in some sense able to overcome correlations in certain regimes since this behaves as a suitable constructed hierarchical GREM. In \cite{MR2531095} they studied the corresponding Gibbs measures. This analysis has been extended in recent work of Kistler and Sebastiani \cite{MR4600154}. 

The connection of the GREM to branching random walks, especially branching Brownian motion, got first explored by Arguin, Bovier and Kistler in \cite{MR2838339}.
They were able to analyse the full extremal process, including the maximal particle, the second maximal, and so on. They showed that extremal particles descend from ancestors that split either shortly after zero or just before the observed time.
In \cite{MR3129797} they continued the study and showed that the extremal process converges in law to a Poisson cluster point process. 
In \cite{MR3065863} the authors analysed the empirical distribution of the maximal displacement and showed that a  Gumbel distribution with a random shift occurs as a limit thus
proving a conjecture of Lalley and  Sellke. 

After this ground breaking work variations of BBM have been studied, for example by Bovier and Hartung in \cite{MR3164771} and \cite{MR3351476}.
 Fang and Zeitouni \cite{zeitounifirst} studied the lower orders of the maximum of a branching random walk with time-inhomogeneous variance on a deterministic binary branching tree, which is extended to the study of branching Brownian motion with time-inhomogenous variance in \cite{zeitounimiddle} by Fang and Zeitouni. 
 In more recent work \cite{slowdownbbm}
Maillard and Zeitouni then studied the maximum of BBM with decaying variance which is close to our setup. See Remark~\ref{rem:secondorder} for a more detailed explanation.

In Theorem~\ref{eq:THmax2} we consider an object that is closely related to a GREM, with the difference being that 
underlying tree in this object is not a deterministic binary tree but a Yule tree like in the BBM case. 

The GREM and its refinements have also found applications in topics beyond branching random walks, see
 \cite[Section~2]{MR3380419} for an outline of applications which include branching diffusions \cite{MR3382171}. A recent application is one to TAP equations by Kistler, Schmidt and Sebastiani \cite{MR4609283}. 
\end{remark}

\section{Proof of Proposition~\ref{prop:condew}: Conditioning  fractional Brownian motion on its past}\label{sec:condew}
We now give a conceptually new proof of Proposition~\ref{prop:condew} making use of the discrete approximation of fractional Brownian motion introduced by \cite{HS}.
\begin{proof}
  We set $\alpha = H-\frac12$. Let $S^{(n)}$ be the discrete approximation to fractional Brownian motion defined by \eqref{eq:defSnintro}.
  By $\mathcal F_{\leq 0}$ we denote the $\sigma$-algebra containing any information about $G_\mu \cap\Z_{\leq 0}$ and the colours of the individuals $i\in \Z_{\leq 0}$.  First observe that the family $\big( S^{(n)}(t)\big)_{n} $  for $t$ fixed  is uniformly integrable since $\big( S^{(n)}(t)\big)_{n} $ is bounded in $L_2$. \ 

%
%
Write
\begin{equation}
    E_n   :=  \bigcap_{m=1}^k \left\{ S^{(n)}(s_m)\in \mathfrak B_m\right\} 
\end{equation}
  for  $\mathfrak B_1, \mathfrak B_2,\ldots, \mathfrak B_k \in\mathcal{B}(\R), 0>s_1>s_2>\ldots > s_k$, then for $t> 0$
  \begin{equation}
    \lim\limits_{n\rightarrow\infty}\EW\left[ S^{(n)}(t)\mathbf{1}_{  E_n   } \right]
    = \EW\left[ B(t)\mathbf{1}_{B_{s_1}\in\mathfrak  B_1, \ldots, B_{s_k}\in\mathfrak B_k  }  \right]. 
  \end{equation} 
  by \cite[Corollary~1.2]{IW} and  uniform integrability.
  Now set
  \begin{equation}
    b_{n, -k}:= \WS\left( \max \left( A_n \cap \left\{ -\infty, \ldots, -2, -1 \right\} \right)=-k \right). 
  \end{equation}
  The event
  \begin{equation}
    \left\{ \max \left( A_n \cap \left\{ -\infty, \ldots, -2, -1 \right\} \right)=-k \right\}
  \end{equation}
   can be decomposed with respect to the last individual right of zero which still belongs to $A_n$, so
  \begin{equation}
    b_{n, -k}= \sum_{l=0}^{n-1} \WS\left( n-l \in A_n \right) \mu\left(k+n-l\right)=\sum_{l=1}^{n} \WS\left( l \in A_n \right) \mu\left(k+l\right).
  \end{equation}
  Using
  \begin{equation}
    \WS\left( 0 \in A_n \right) \overset{n\rightarrow \infty}{\sim} \frac{ 1 }{\Gamma\left(\alpha\right)\Gamma\left(1-\alpha\right)} \cdot n^{\alpha-1}
  \end{equation}
  and  choosing a $\mu$ satisfying 
  \begin{equation}
    \frac{ \mu(n) }{n^{-\alpha-1}}\sim \alpha \qquad \mbox{ for }n\rightarrow \infty
  \end{equation}
  we obtain for $n\rightarrow \infty$
  \begin{equation}
    b_{n, -k}=\sum_{l=1}^{n} \WS\left( l \in A_n \right) \mu\left(k+l\right)\sim\frac{\alpha}{\Gamma\left(\alpha\right)\Gamma\left(1-\alpha\right)} \sum_{l=1}^{n} \left(n-l\right)^{\alpha-1} \left( k+l \right)^{-\alpha-1}.
  \end{equation}
  For $k=\xi n, \xi>0$ we further get
  \begin{eqnarray*}
    b_{n, -k}&\overset{n\rightarrow \infty}{\sim}& \frac{\alpha}{\Gamma\left(\alpha\right)\Gamma\left(1-\alpha\right)}   \sum_{l=1}^{n} \left(n-l\right)^{\alpha-1} \left( k+l \right)^{-\alpha-1}\\
             &=& \frac{\alpha}{\Gamma\left(\alpha\right)\Gamma\left(1-\alpha\right)} n^{-1} \cdot \frac{1}{n} \sum_{l=1}^{n} \left( 1- \frac{l}{n} \right)^{\alpha-1} \left( \xi + \frac{l}{n} \right)^{-\alpha-1}\\
             &\overset{n\rightarrow \infty}{\sim}& \frac{\alpha}{\Gamma\left(\alpha\right)\Gamma\left(1-\alpha\right)n} \int_0^1 \left( 1-x \right)^{\alpha-1} \left(\xi+x\right)^{-\alpha-1} \dif{x}.
  \end{eqnarray*}
   Applying the substitution $u=\tfrac{1}{(x+\xi)^{\alpha}}$ we get the identity
  
  \begin{equation}
    \int_0^1 \left( 1-x \right)^{\alpha-1} \left(\xi+x\right)^{-\alpha-1} \dif{x}=  \frac{ \xi^{-\alpha} }{ \alpha + \alpha \xi }, 
  \end{equation}
   such that in total  
  \begin{equation}\label{eq:bnk}
    b_{n, -k}\overset{n\rightarrow \infty}{\sim} \frac{\alpha}{\Gamma\left(\alpha\right)\Gamma\left(1-\alpha\right)n}\cdot  \frac{ \xi^{-\alpha} }{ \alpha + \alpha \xi }.
  \end{equation}
  Using \eqref{eq:Snbranch0} we have 
  \begin{equation}
     c(n)  S^{(n)}(t)  = \sum_{l=1}^{ \lfloor nt \rfloor } Y_l +  \left[ tn-\left\lfloor tn \right\rfloor  \right] Y_{ \left\lfloor tn \right\rfloor  } + \left[  \left\lceil tn \right\rceil - tn \right]Y_{ \left\lceil tn \right\rceil }.
  \end{equation}
 If we now define
  \begin{equation}
     \sum_{l=1}^{  nt  } a_l:= \sum_{l=1}^{ \lfloor nt \rfloor } a_l +  \left[ tn-\left\lfloor tn \right\rfloor  \right] a_{ \left\lfloor tn \right\rfloor  } + \left[  \left\lceil tn \right\rceil - tn \right]a_{ \left\lceil tn \right\rceil }
   \end{equation}
   for a sequence $\left( a_l \right)_{l}$
 we can write
  \begin{equation}
     c(n)\EW\left[ S^{(n)}(t)\mathbf{1}_{  E_n  } \right]
    =  \EW\left[ \sum_{l=1}^{nt} \EW\left[ Y_l | \mathcal F_{\leq 0}  \right]  \mathbf{1}_{  E_n  } \right] .
  \end{equation}
  Observing
  \begin{equation}
    \EW\left[ Y_l | \mathcal F_{\leq 0}  \right] = \sum_{s\geq 0} Y_{-s} b_{l, - \frac{s}{l}\cdot l}
  \end{equation}
  gives us
  \begin{equation}
     c(n)\EW\left[ S^{(n)}(t)\mathbf{1}_{  E_n  } \right]
    =  \EW\left[ \mathbf{1}_{  E_n  }  \sum_{s\geq 0} \sum_{l=1}^{nt} Y_{-s} b_{l, - \frac{s}{l}\cdot l}  \right].
  \end{equation}
  If we now plug in \eqref{eq:bnk}, we obtain
  \begin{equation}
      \EW\left[ S^{(n)}(t)\mathbf{1}_{  E_n  } \right]
    \overset{n\rightarrow \infty}{\sim}
    \frac{1}{c(n)}\EW\left[ \mathbf{1}_{  E_n  }\sum_{s\geq 0}Y_{-s}  \sum_{l=1}^{nt} l^{-1} \frac{1}{\Gamma\left(\alpha\right)\Gamma\left(1-\alpha\right)} \cdot \frac{ \left( \frac{s}{l} \right)^{-\alpha} }{1+ \left( \frac{s}{l} \right)}  \right].
  \end{equation}
    Setting $\Delta S^{(n)}\left(-\tfrac{s}{n}\right):= S^{(n)}\left( -\tfrac{s}{n}\right)-S^{(n)}\left( -\tfrac{s-1}{n}\right)= \tfrac{1}{c(n)}Y_{-s}$ the right hand side can be rewritten  as
  \begin{eqnarray*}
    &&\frac{1}{c(n)} \EW\left[ \mathbf{1}_{  E_n  }\frac{1}{\Gamma\left(\alpha\right)\Gamma\left(1-\alpha\right)}\sum_{s\geq 0} Y_{-s} \sum_{l=1}^{nt} l^{\alpha} s^{-\alpha} \left(l+s\right)^{-1} \right]\\
    &=& \EW\left[ \mathbf{1}_{  E_n  } \frac{1}{\Gamma\left(\alpha\right)\Gamma\left(1-\alpha\right)}\sum_{s\geq 0}  \left( \frac{s}{n} \right)^{-\alpha} \Delta S^{(n)}\left(-\frac{s}{n}\right) \frac{1}{n} \sum_{l=1}^{nt} \left( \frac{l}{n} \right)^{\alpha} \left( \frac{l+s}{n} \right)^{-1} \right]. 
  \end{eqnarray*}
  Write 
  \begin{equation}
      f^{(n)}\left( \frac{s}{n}, t\right) :=  \left( \frac{s}{n} \right)^{-\alpha} \frac{1}{n} \sum_{l=1}^{nt} \left( \frac{l}{n} \right)^{\alpha} \left( \frac{l+s}{n} \right)^{-1}, \qquad 
      f\left( \frac{s}{n},t\right) :=  \left( \frac{s}{n} \right)^{-\alpha}  \int_0^t y^{\alpha} \left(y+\frac{s}{n}\right)^{-1} \dif{y},  
  \end{equation}
  then 
  \begin{eqnarray*}
      &&\sum_{s\geq 0}  \left( \frac{s}{n} \right)^{-\alpha} \Delta S^{(n)}\left(-\frac{s}{n}\right) \frac{1}{n} \sum_{l=1}^{nt} \left( \frac{l}{n} \right)^{\alpha} \left( \frac{l+s}{n} \right)^{-1}  \\
      &=& \sum_{s\geq 0} \Delta S^{(n)}\left(-\frac{s}{n}\right) \left[ f\left( \frac{s}{n},t\right)+ f^{(n)}\left( \frac{s}{n}, t\right) -f\left( \frac{s}{n},t\right) \right].
  \end{eqnarray*}
  Since $\left\vert\Delta S^{(n)}\left(-\tfrac{s}{n}\right) \right\vert \leq \mathrm{const}\cdot n^{-\alpha - \frac12}$ and 
  \begin{equation}
      \left\vert f^{(n)}\left( \frac{s}{n}, t\right) -f\left( \frac{s}{n},t\right) \right\vert
      \leq \mathrm{const}\cdot  \left( \frac{s}{n} \right)^{-\alpha} \cdot \frac{1}{n} \cdot \int_0^t y^{\alpha}\left(y+ \frac{s}{n} \right)^{-1} \dif{y} 
  \end{equation}
  we obtain
  \begin{equation}
      \left\vert\sum_{s\geq 0} \Delta S^{(n)}\left(-\frac{s}{n}\right) \left[  f^{(n)}\left( \frac{s}{n}, t\right) -f\left( \frac{s}{n},t\right) \right]
      \leq \mathrm{const}\cdot \sum_{s\geq 0}  n^{-\frac{3}{2}} s^{-\alpha} \int_0^t y^{\alpha}\left(y+\frac{s}{n}\right)^{-1}\dif{y}\right\vert.  
  \end{equation}
  We now split the sum: 
  For $\varepsilon>0$ small enough we have
  \begin{equation}
      \mathrm{const}\cdot \sum_{s= 0}^{n^{1+\varepsilon}}  n^{-\frac{3}{2}} s^{-\alpha} \int_0^t y^{\alpha}\left(y+\frac{s}{n}\right)^{-1}\dif{y}
      \leq \mathrm{const}\cdot  n^{-\frac{3}{2}} \left( n^{1+\varepsilon} \right)^{1-\alpha }  \rightarrow 0
  \end{equation}
  as $n\rightarrow \infty$, as well as 
  \begin{equation}
      \mathrm{const}\cdot \sum_{s\geq n^{1+\varepsilon}}  n^{-\frac{3}{2}} s^{-\alpha} \int_0^t y^{\alpha}\left(y+\frac{s}{n}\right)^{-1}\dif{y}
      \leq \mathrm{const}\cdot \sum_{s \geq n^{ 1+\varepsilon }} n^{-\frac{3}{2}}s^{-\alpha} \cdot \frac{n}{s} 
      \rightarrow 0
  \end{equation}
  as $n\rightarrow \infty$. 
  So
  \begin{equation}
    \frac{1}{\Gamma\left(\alpha\right)\Gamma\left(1-\alpha\right)}\sum_{s\geq 0}  \left( \frac{s}{n} \right)^{-\alpha} \Delta S^{(n)}\left(-\frac{s}{n}\right) \frac{1}{n} \sum_{l=1}^{nt} \left( \frac{l}{n} \right)^{\alpha} \left( \frac{l+s}{n} \right)^{-1}  
  \end{equation}
  converges in distribution to
  \begin{equation}
    \frac{1}{\Gamma\left(\alpha\right)\Gamma\left(1-\alpha\right)} \int_0^\infty s^{-\alpha} \int_0^t y^{\alpha} \left(y+s\right)^{-1} \dif{y}\, \dif{B}_{-s}.
  \end{equation}
  This gives the desired result since by Euler's reflection formula
  \begin{equation}
    \frac{1}{\Gamma\left(\alpha\right)\Gamma\left(1-\alpha\right)} = \frac{ \sin\left(\pi \left(H-\frac12\right)\right) }{\pi}.
  \end{equation}
\end{proof}
\section{ $\mathfrak y$-indexed processes (with memory)
}\label{sec:branchingtindexed}
In this section we want to put the branching HS-model in a wider context. Using this general theory we will then be able to conclude the validity of Proposition~\ref{prop:condGauss} directly by Theorem~\ref{prop:iwtheorem11cor}. 
To this purpose we start with the following definition which generalizes the class of branching processes beyond the Markov property.
 
\begin{definition}\label{def:branchingprocess}
   Let $\mathfrak y$ be a binary branching tree  as before and $ X= \left( X(t) \right)_{t\in\R}$ a real-valued stochastic process with distribution $\nu$ and càdlàg-paths. Given the tree $\mathfrak y$, 
  assume that attached to each branch $b$ of the random tree $\mathfrak y$ there is a process
  $\left( X_b(t) \right)_{t\in\R}$ such that for all branches $b, \tilde b\in \mathcal B$ with $b \wedge \tilde b =s$
  \begin{equation}\label{eq:equal}
    X_b(\xi)=X_{\tilde b}(\xi) \qquad \forall \xi \leq s
  \end{equation}
  and for all $b\in \mathcal B$
  \begin{equation}\label{eq:equalbranchdist}
    X_b \overset{(d)}{=} X. 
  \end{equation}
  The process
  \begin{equation}
   X_{\mathfrak y}:= \left( \left( X_{b}(t) \right)_{t\in\R} \right)_{b\in \mathcal B}
  \end{equation}
  will be called a \emph{  $\mathfrak y$-indexed process    (with memory)  with basic law $\nu$}.
\end{definition}
\
 Such processes can be constructed inductively for all basic laws as long as the tree $\mathfrak y$ has not uncountably many branches.\ 

\begin{definition}
  $ X_{\mathfrak y}$ has the \emph{pairwise conditional independence property \eqref{eq:equaldist}} 
 iff for all branches $b,\tilde b\in \mathcal B$ 
  \begin{equation}\label{eq:equaldist}\tag{PCI}
    \mathcal L\left( \left. \left( X_{b}(t) \right)_{ t\geq b\wedge \tilde b } \right\vert  \left( X_{\tilde b}(t) \right)_{ t\in\R}   \right)=\mathcal L\left( \left. \left( X_{b}(t) \right)_{ t\geq  b\wedge \tilde b} \right\vert  \left( X_{\tilde b}(t) \right)_{ t\leq b\wedge \tilde b }   \right)
  \end{equation}
  holds.
\end{definition}
\
 Note that \eqref{eq:equaldist} is equivalent to $\left( X_{b}(t) \right)_{ t\geq b\wedge \tilde b }$ being conditionally independent of $\left( X_{\tilde b}(t) \right)_{ t\geq b\wedge \tilde b }$ given $\left( X_{b}(t) \right)_{ t\leq b\wedge \tilde b }$   for all pairs of branches $b, \tilde b$. \ \\
 See also \cite[Section~2]{treeGaussianLLN} for a similar approach.\\\ 
Assume that the process $\left(X_t\right)_{t\in\R}$ has second moments. Now indeed for two branches the covariance $\Cov\left[ X_{b}(t_1), X_{\tilde b}(t_2) \right]$ of a \emph{$\mathfrak y$-indexed processes with basic law $\mathcal L \left( \left(X_t\right)_{t\in\R} \right)$}  having the \emph{pairwise conditional independence property \eqref{eq:equaldist}} \ is uniquely determined: 
\begin{proposition}\label{prop:covdist}
  Let
\begin{equation}
    \left( \left( X_{b}(t) \right)_{t\in\R} \right)_{b\in \mathcal B}
  \end{equation}
  be a
  \emph{  $\mathfrak y$-indexed process   with basic law $\mathcal L \left( \left(X_t\right)_{t\in\R} \right)$}   and let \eqref{eq:equaldist} be fulfilled. Assume that $\left(X_t\right)_{t\in\R}$ has finite second moments.
  For branches $b,\tilde b \in \mathfrak y$ with $b\wedge \tilde b=s$ and $t_1, t_2\geq 0$
  \begin{equation}
     \mathcal L\left( \left( X_{b}(t_1), X_{\tilde b}(t_2)\right) \right)  
  \end{equation}
  is uniquely determined by the distribution of the process $X$ along one branch, i.e. by $\mathcal L \left( \left( X_b(t) \right)_{t\in\R} \right)$
  for any branch $b\in \mathcal B$. 
\end{proposition}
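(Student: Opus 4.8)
The plan is to condition on the shared past of the two branches and exploit the pairwise conditional independence property \eqref{eq:equaldist} to factorize the conditional law. Write $s := b\wedge \tilde b$ and let $P_s := (X_b(u))_{u\leq s}$ denote the common-past path; by \eqref{eq:equal} we have $P_s = (X_{\tilde b}(u))_{u\leq s}$, so $P_s$ genuinely encodes the shared history and the $\sigma$-algebra $\sigma(P_s)$ does not depend on whether it is read off along $b$ or along $\tilde b$. The cases $t_1\leq s$ or $t_2\leq s$ are immediate from \eqref{eq:equal} and \eqref{eq:equalbranchdist}: the offending coordinate is then $\sigma(P_s)$-measurable with a single-branch marginal, so I focus on $t_1, t_2 > s$.

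Since $X_b$ and $X_{\tilde b}$ are real-valued c\`adl\`ag processes, their paths take values in a Polish space and regular conditional distributions exist. First I would record that, by \eqref{eq:equalbranchdist}, the joint law of $\big( P_s, X_b(t_1) \big)$ equals that of $\big( (X(u))_{u\leq s}, X(t_1) \big)$, and is thus determined by the basic law $\mathcal L\big( (X_t)_{t\in\R} \big)$. Hence a regular conditional distribution $\kappa_1(w,\cdot) := \mathcal L\big( X_b(t_1) \mid P_s = w \big)$ can be chosen as a function of the single-branch law alone, and the same argument along $\tilde b$ produces $\kappa_2(w,\cdot) := \mathcal L\big( X_{\tilde b}(t_2) \mid P_s = w \big)$, again a single-branch quantity. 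The law $\pi := \mathcal L(P_s)$ of the common past is a single-branch marginal as well.

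The decisive step is the factorization. By the remark following \eqref{eq:equaldist}, property \eqref{eq:equaldist} asserts precisely that $(X_b(t))_{t\geq s}$ and $(X_{\tilde b}(t))_{t\geq s}$ are conditionally independent given $\sigma(P_s)$; in particular $X_b(t_1)$ and $X_{\tilde b}(t_2)$ are conditionally independent given $P_s$, so that
\begin{equation}
  \mathcal L\big( (X_b(t_1), X_{\tilde b}(t_2)) \mid P_s = w \big) = \kappa_1(w,\cdot) \otimes \kappa_2(w,\cdot) \qquad \text{for } \pi\text{-a.e. } w .
\end{equation}
Integrating against $\pi$ then yields
\begin{equation}
  \mathcal L\big( (X_b(t_1), X_{\tilde b}(t_2)) \big) = \int \kappa_1(w,\cdot) \otimes \kappa_2(w,\cdot)\, \pi(\dif w),
\end{equation}
and every ingredient on the right — the kernels $\kappa_1, \kappa_2$ and the mixing measure $\pi$ — has been identified as a function of the single-branch law. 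This establishes the asserted uniqueness.

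I expect the main difficulty to be measure-theoretic bookkeeping rather than anything conceptual. One must check that the regular conditional distributions can genuinely be taken as functions of the basic law (which rests on Skorokhod space being standard Borel, so that disintegrations exist and are almost surely unique), and that the version of $\kappa_1$ produced by the branch-$b$ disintegration is compatible with the version appearing in the factorization coming from \eqref{eq:equaldist}; fixing one disintegration of $\pi$ once and for all resolves this. The only genuinely structural input is \eqref{eq:equal}, which guarantees that the conditioning past is the same object along both branches, so that the two marginal disintegrations are taken with respect to a common mixing measure.
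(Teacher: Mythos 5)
Your proof is correct, and its decisive input --- conditioning on the common past and reading \eqref{eq:equaldist} as conditional independence of the two post-split futures given that past --- is exactly the structural fact with which the paper's own proof (\appG) opens: there it appears as the identity $\mathcal L\left( \left.\left( X_{\tilde b}(t) \right)_{t\geq s}\right\vert \left( X_{b}(t) \right)_{t\in\R}\right) = \mathcal L\left( \left.\left( X_{b}(t) \right)_{t\geq s}\right\vert \left( X_{b}(t) \right)_{t\leq s}\right)$, i.e.\ as a sequential disintegration (full path of $b$ first, then the single-branch prediction kernel for $\tilde b$). The execution differs from yours in a way worth noting. After stating that identity, the paper works at the level of second moments: it reduces $\EW\left[ X_b(t_1) X_{\tilde b}(t_2) \right]$ by iterated conditional expectations (conditioning on $\sigma\left( X_{\tilde b}(t), t\leq t_2 \right)$, shrinking to $\sigma\left( X_{\tilde b}(t), t\leq s \right)$ via \eqref{eq:equaldist}, then swapping $b$ for $\tilde b$ via \eqref{eq:equalbranchdist}) to a purely single-branch expression --- in effect proving the covariance version of the claim, which is what the corollary following the proposition actually uses. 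You instead stay at the level of laws throughout: disintegrate both futures against the common past, factorize the conditional joint law by \eqref{eq:equaldist} into $\kappa_1(w,\cdot)\otimes \kappa_2(w,\cdot)$, identify $\kappa_1$, $\kappa_2$ and the mixing measure $\pi$ as single-branch objects via \eqref{eq:equalbranchdist} and \eqref{eq:equal}, and integrate. This is more directly matched to the distributional statement being asserted (a covariance computation alone would not pin down the bivariate law without Gaussianity, which the proposition does not assume), and you also treat the degenerate cases $t_i\leq s$ explicitly, which the paper leaves implicit. What the paper's route buys is the explicit covariance identity reused later for $\rho^{\mathrm{HS}}$; what yours buys is a cleaner proof of the proposition exactly as stated, with the measure-theoretic points (existence and a.e.-uniqueness of regular conditional distributions on path space) correctly isolated as the only technical caveats.
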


\begin{corollary}
 Let the basic law $\mathcal L \left( \left(X_t\right)_{t\in\R} \right)$ have finite variance for all $t$. Then in the setting of the above proposition \ 
  \begin{equation}
    \Cov\left[ X_{b}(t_1), X_{\tilde b}(t_2) \right]
  \end{equation}
   is uniquely determined by the distribution of the process $X$ along one branch, i.e. by $\mathcal L \left( \left( X_b(t) \right)_{t\in\R} \right)$  for any branch $b\in \mathcal B$. 
\end{corollary}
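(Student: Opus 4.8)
The plan is to deduce the Corollary almost for free from Proposition~\ref{prop:covdist}, which already delivers the full bivariate law; the only genuine content to check is that the covariance is a well-defined functional of that law, and this is precisely what the finite-variance hypothesis secures. First I would record that, for fixed branches $b, \tilde b$ with $b\wedge \tilde b = s$ and fixed times $t_1, t_2 \geq 0$, the quantity
\[
  \Cov\left[ X_b(t_1), X_{\tilde b}(t_2) \right] = \EW_{\mathfrak y}\left[ X_b(t_1) X_{\tilde b}(t_2) \right] - \EW_{\mathfrak y}\left[ X_b(t_1) \right]\EW_{\mathfrak y}\left[ X_{\tilde b}(t_2) \right]
\]
is a deterministic function of the joint distribution $\mathcal L\left( \left( X_b(t_1), X_{\tilde b}(t_2) \right) \right)$ alone: each of the three expectations on the right is the integral of a fixed measurable function of the pair $\left( X_b(t_1), X_{\tilde b}(t_2) \right)$ against that joint law.

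Second, I would invoke Proposition~\ref{prop:covdist} directly. Under the pairwise conditional independence property~\eqref{eq:equaldist}, that proposition asserts that $\mathcal L\left( \left( X_b(t_1), X_{\tilde b}(t_2) \right) \right)$ is uniquely determined by the one-branch distribution $\mathcal L\left( \left( X_b(t) \right)_{t\in\R} \right)$. Combining this with the previous paragraph yields at once that $\Cov\left[ X_b(t_1), X_{\tilde b}(t_2) \right]$ depends on the model only through the law of $X$ along a single branch, which is the assertion of the Corollary.

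The one point requiring care, and the step I expect to be the main (if modest) obstacle, is integrability: uniqueness of the joint law does not by itself force the covariance even to exist. Here the standing hypothesis that the basic law has finite variance for all $t$ does the work. Since $X_b \overset{(d)}{=} X$ and $X_{\tilde b} \overset{(d)}{=} X$ by~\eqref{eq:equalbranchdist}, both marginals $X_b(t_1)$ and $X_{\tilde b}(t_2)$ lie in $L^2$, so by the Cauchy--Schwarz inequality
\[
  \left\vert \EW_{\mathfrak y}\left[ X_b(t_1) X_{\tilde b}(t_2) \right] \right\vert \leq \EW_{\mathfrak y}\left[ X_b(t_1)^2 \right]^{1/2} \EW_{\mathfrak y}\left[ X_{\tilde b}(t_2)^2 \right]^{1/2} < \infty.
\]
Thus all three expectations above are finite, the covariance is a genuine finite functional of the bivariate law, and the argument closes.
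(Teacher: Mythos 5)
Your proposal is correct and takes essentially the same route as the paper: the Corollary is stated there without separate proof precisely because, as you argue, $\Cov\left[ X_{b}(t_1), X_{\tilde b}(t_2) \right]$ is a functional of the bivariate law $\mathcal L\left( \left( X_{b}(t_1), X_{\tilde b}(t_2) \right) \right)$, which Proposition~\ref{prop:covdist} determines from the single-branch law, while the finite-variance hypothesis together with \eqref{eq:equalbranchdist} and Cauchy--Schwarz guarantees the covariance exists. The only difference worth noting is that the paper's appendix proof of the Proposition also derives the covariance identity directly, writing $\EW_{ \mathfrak y }\left[ X_{b}(t_1) X_{\tilde b}(t_2) \right] = \EW_{ \mathfrak y }\left[ \EW_{ \mathfrak y }\left[\left. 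X_{\tilde b}(t_1) \right\vert \sigma\left( X_{\tilde b}(t), t\leq s \right)  \right]  X_{\tilde b}(t_2) \right]$ via the tower property, \eqref{eq:equaldist} and \eqref{eq:equalbranchdist}, so your black-box use of the Proposition reaches the same conclusion slightly more abstractly.
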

 
   Keeping in mind Definition~\ref{def:branchingprocess}  for branches $b, b_1,\ldots, b_m\in \mathfrak y$ and
  \begin{equation}
    \mathcal S:= \bigcup\limits_{i=1}^m b_i
  \end{equation}
  we introduce the notations 
  \begin{equation}
   X_{b}:= \left( X_b(t) \right)_{t\in\R}, \qquad X_{\mathcal S}:= \left(\left( X_{b_i}(t) \right)_{ t\in\R  }\right)_{b_i \in \mathcal S}, \qquad  X_{b \backslash \mathcal S }:= \left( X_b(t) \right)_{ t\geq \max\limits_{i=1,\ldots, m} b\wedge b_i  },
  \end{equation}
 as well as 
  \begin{equation}
    X_{ \mathcal S \backslash b }:= \left(\left( X_{b_i}(t) \right)_{ t\geq b\wedge b_i }\right)_{b_i \in \mathcal S}, \qquad  X_{\mathcal S \cap b}:= \left(\left( X_{b_i}(t) \right)_{ t\leq b\wedge b_i }\right)_{b_i \in \mathcal S}.
  \end{equation}
  Having defined this we can specify a property stronger than \eqref{eq:equaldist}:
  \begin{definition}\label{def:strong}
Let $X$ be a \emph{$\mathfrak y$-indexed process  with basic law $\mathcal L \left( \left(X_t\right)_{t\in\R} \right)$}.
We say that $X$ has the the \emph{  conditional independence property   \eqref{eq:strongprop}} iff
  \begin{equation}\label{eq:strongprop}\tag{CI}
    \mathcal L \left( \left. X_{ b\backslash \mathcal S } \right\vert X_{ \mathcal S }  \right) =  \mathcal L \left( \left. X_{ b\backslash \mathcal S } \right\vert X_{ \mathcal S \cap b }  \right)
  \end{equation}
   for all pairwise distinct branches $b, b_1, \ldots, b_m\in \mathfrak y$  with  
  \begin{equation}
     \mathcal S:= \bigcup\limits_{i=1}^m b_i
     .
  \end{equation}
\end{definition}

Let us now give a definition of Gaussian   $\mathfrak y$-indexed process es: 
\begin{definition}\label{def:branchingprocess_gauss}
  Let $\nu$ be the law of a Gaussian process $\left( X(t)\right)_{t\in\R}$. We call the corresponding   $\mathfrak y$-indexed process  $X_{\mathfrak y}$  a \emph{Gaussian   $\mathfrak y$-indexed process } if and only if for all branches $b_1,\ldots, b_m\in \mathfrak y$ and $t_1, \ldots, t_m$
  \begin{equation}
    \left( X_{b_1}(t_1),\ldots, X_{b_m}(t_m) \right)
  \end{equation}
  is multivariate Gaussian distributed.
\end{definition}
The following Lemma is proved in \appG.
\begin{lemma}\label{eq:corkallenberg}
 Every \textbf{Gaussian} \emph{  $\mathfrak y$-indexed process }  fulfilling \eqref{eq:equaldist}  \ has the \emph{  conditional independence property   \eqref{eq:strongprop}}.
\end{lemma}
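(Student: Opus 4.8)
The plan is to recast both \eqref{eq:equaldist} and \eqref{eq:strongprop} as orthogonality statements in the Gaussian Hilbert space $\mathcal H := \overline{\operatorname{span}}\{X_b(t): b\in\mathcal B,\, t\in\R\}\subseteq L^2$, which is legitimate because Definition~\ref{def:branchingprocess_gauss} makes every finite subfamily of the $X_b(t)$ jointly Gaussian. The tool is the standard Gaussian characterisation of conditional independence: for jointly Gaussian families $A,B,C$ one has $A\perp B\mid C$ if and only if the $L^2$-projection residuals $(I-P_C)A$ and $(I-P_C)B$ are orthogonal, where $P_C$ is orthogonal projection onto $\overline{\operatorname{span}}(C)$; equivalently it suffices that $\langle (I-P_C)a,\, b\rangle = 0$ for all generators $a\in A$, $b\in B$. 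I would invoke this equivalence (the ``Kallenberg'' input) to turn the whole lemma into a computation of inner products, and note that it requires no non-degeneracy hypothesis since it is phrased entirely through projections in $\mathcal H$. Writing $\mathcal H_b^{\le s}:=\overline{\operatorname{span}}\{X_b(t):t\le s\}$, property \eqref{eq:equaldist} for a pair $b,\tilde b$ with $b\wedge\tilde b=s$ then reads $(I-P_{\mathcal H_b^{\le s}})X_b(t_1)\perp X_{\tilde b}(t_2)$ for all $t_1,t_2\ge s$, using the common past $\mathcal H_b^{\le s}=\mathcal H_{\tilde b}^{\le s}$.

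The first substantive step is a geometric collapse of the conditioning set. Write $s_i:=b\wedge b_i$ and $s^*:=\max_i s_i$. By \eqref{eq:equal} each $X_{b_i}(t)$ with $t\le s_i$ coincides with $X_b(t)$, so the seemingly large family $X_{\mathcal S\cap b}$ spans exactly $C:=\mathcal H_b^{\le s^*}$, the past of the \emph{single} branch $b$ up to time $s^*$. Consequently, setting $P:=P_C$, property \eqref{eq:strongprop}, i.e. $X_{b\backslash\mathcal S}\perp X_{\mathcal S}\mid X_{\mathcal S\cap b}$, reduces via the Gaussian residual criterion to showing
$\langle (I-P)X_b(t_u),\, X_{b_i}(t_v)\rangle = 0$
for every $t_u\ge s^*$, every index $i$, and every $t_v$.

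To verify this I would, for each fixed $i$, introduce $P_i:=P_{\mathcal H_b^{\le s_i}}$ (projection onto the common past of $b$ and $b_i$; note $\mathcal H_b^{\le s_i}=\mathcal H_{b_i}^{\le s_i}$) and decompose $X_{b_i}(t_v) = P_i X_{b_i}(t_v) + (I-P_i)X_{b_i}(t_v)$. The first summand lies in $\mathcal H_b^{\le s_i}\subseteq C$ and is therefore orthogonal to the residual $(I-P)X_b(t_u)$. For the second summand, the pairwise property \eqref{eq:equaldist} applied to the single pair $(b,b_i)$ gives $(I-P_i)X_{b_i}(t_v)\perp X_b(t')$ for all $t'\ge s_i$, while $(I-P_i)X_{b_i}(t_v)\perp\mathcal H_{b_i}^{\le s_i}=\mathcal H_b^{\le s_i}$ covers all $t'\le s_i$; together these show that $(I-P_i)X_{b_i}(t_v)$ is orthogonal to the entire branch space $\mathcal H_b=\overline{\operatorname{span}}\{X_b(t'):t'\in\R\}$, hence to $(I-P)X_b(t_u)\in\mathcal H_b$. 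Both summands contribute zero, which is exactly the required identity. Crucially, this argument touches only one branch $b_i$ at a time, so no induction on $m$ is needed.

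The main obstacle — and essentially the only conceptual content — lies in the second and third steps: recognising that $X_{\mathcal S\cap b}$ collapses onto $\mathcal H_b^{\le s^*}$, and that the future residual $(I-P_i)X_{b_i}(t_v)$ is orthogonal not merely to the future of $b$ but to \emph{all} of $\mathcal H_b$. It is this last point that lets the pairwise hypothesis \eqref{eq:equaldist} do the work of the global hypothesis \eqref{eq:strongprop}, since it decouples each $b_i$ from the whole of branch $b$ rather than just from the part after $s_i$. Everything else is routine Gaussian bookkeeping, namely the reduction of conditional independence to orthogonality of $L^2$-residuals and the fact that such orthogonality may be checked on generating variables by bilinearity and continuity of the inner product.
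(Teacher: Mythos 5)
Your proof is correct, and it rests on the same skeleton as the paper's argument — treat one branch $b_i$ at a time, collapse the conditioning family $X_{\mathcal S\cap b}$ onto the past of the single branch $b$ up to $s^*=\max_i\, b\wedge b_i$, and then let the pairwise property \eqref{eq:equaldist} for the pair $(b,b_i)$ kill the cross terms — but the decomposition you use is genuinely different. The paper manipulates conditional laws: it chooses the branch $b'$ attaining $s^*$, splits the \emph{conditioning set} as the pair $(X_{(b\cap b')\backslash b_i}, X_{b\cap b_i})$ (with $b_i$ playing the role of the paper's $\hat b$), and argues via a factorization of conditional distributions that the extra conditioning on $X_{(b\cap b')\backslash b_i}$ can be dropped, so the conditional covariance given $X_{\mathcal S\cap b}$ equals the one given $X_{b\cap b_i}$, which vanishes by \eqref{eq:equaldist}. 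You instead split the \emph{variable}: $X_{b_i}(t_v)=P_iX_{b_i}(t_v)+(I-P_i)X_{b_i}(t_v)$, with the projected part landing inside $C=\mathcal H_b^{\le s^*}$ and the residual orthogonal to the whole branch space $\mathcal H_b$. These are dual routes to the same vanishing covariance; what yours buys is that the entire argument is bilinear algebra in the Gaussian Hilbert space — no product-measure factorizations of conditional laws are needed, the reduction to generators is automatic, and it is manifest that no non-degeneracy assumption enters since only orthogonal projections are used. One small point worth a sentence in a final write-up: your claim that \eqref{eq:equaldist} gives $(I-P_i)X_{b_i}(t_v)\perp X_b(t')$ for $t'\ge s_i$ applies when $t_v\ge s_i$; for $t_v<s_i$ the residual $(I-P_i)X_{b_i}(t_v)$ is identically zero by \eqref{eq:equal}, so orthogonality holds trivially there as well.
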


Proposition~\ref{prop:covdist} implies another nice property of Gaussian \emph{  $\mathfrak y$-indexed processes } having the  \emph{pairwise conditional independence property \eqref{eq:equaldist}} \  namely that their distribution is uniquely determined by the distribution along one branch: 
\begin{corollary}\label{cor:uniquedistGauss}
   The distribution of a \textbf{Gaussian} \emph{  $\mathfrak y$-indexed process } having the  \emph{pairwise conditional independence property \eqref{eq:equaldist}} \ is uniquely determined by its basic law. 
\end{corollary}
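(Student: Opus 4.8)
The plan is to exploit the defining feature of Gaussian laws, namely that they are pinned down by their first two moments, and thereby reduce the whole statement to the pairwise assertion already contained in Proposition~\ref{prop:covdist}. By Definition~\ref{def:branchingprocess_gauss}, for every finite family of branches $b_1,\dots,b_m\in\mathcal B$ and times $t_1,\dots,t_m$ the vector $(X_{b_1}(t_1),\dots,X_{b_m}(t_m))$ is multivariate Gaussian, and a multivariate Gaussian is completely determined by its mean vector and its covariance matrix. Since $\mathcal B$ is countable and the paths are c\`adl\`ag, the law of $X_{\mathfrak y}$ is determined by its finite-dimensional distributions. Hence it suffices to show that all one-point means $\EW_{\mathfrak y}[X_b(t)]$ and all pairwise covariances $\Cov[X_b(t_1),X_{\tilde b}(t_2)]$ are functionals of the basic law $\nu$ alone.

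First I would dispose of the means: property \eqref{eq:equalbranchdist} gives $X_b\overset{(d)}{=}X$ for every branch $b$, so $\EW_{\mathfrak y}[X_b(t)]$ equals the mean of $\nu$ at time $t$ and is therefore a functional of $\nu$. For the covariances I would invoke Proposition~\ref{prop:covdist} together with the corollary immediately following it: for any two branches $b,\tilde b$ with $b\wedge\tilde b=s$ (the case $b=\tilde b$ being trivially the autocovariance of $\nu$), the joint law of $(X_b(t_1),X_{\tilde b}(t_2))$, and in particular $\Cov[X_b(t_1),X_{\tilde b}(t_2)]$, is uniquely determined by $\mathcal L((X_b(t))_{t\in\R})=\nu$. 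Consequently every entry of the mean vector and of the covariance matrix of $(X_{b_1}(t_1),\dots,X_{b_m}(t_m))$ is a functional of $\nu$, so this multivariate Gaussian law is itself determined by $\nu$.

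Finally I would conclude that, since this holds for every finite collection of branch–time pairs, all finite-dimensional distributions of $X_{\mathfrak y}$—and hence its law on the relevant path space—are determined by $\nu$. I do not expect a genuine obstacle here: the only points requiring a word of care are the passage from pairwise covariances to the full joint law, which is exactly where Gaussianity enters (for a non-Gaussian process the bivariate marginals of Proposition~\ref{prop:covdist} would not suffice), and the routine measure-theoretic step from finite-dimensional distributions to the law on function space, which is unproblematic because $\mathcal B$ is countable. The substantive work has already been carried out in Proposition~\ref{prop:covdist}; the corollary is essentially the observation that for Gaussian fields the pairwise covariances determine everything.
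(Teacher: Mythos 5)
Your proposal is correct and follows essentially the same route as the paper: the paper also deduces the corollary from Proposition~\ref{prop:covdist}, whose proof ends precisely with the observation that once the pairwise covariances (and means, via \eqref{eq:equalbranchdist}) are determined by the basic law, joint Gaussianity of all finite collections $\left( X_{b_1}(t_1),\ldots, X_{b_m}(t_m) \right)$ pins down every finite-dimensional distribution and hence the law of $X_{\mathfrak y}$. Your write-up is slightly more explicit about the means and the passage from finite-dimensional distributions to the law on path space, but these are exactly the steps the paper treats as routine.
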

 
\begin{remark}\label{def:bfbm}
  Let $\mathfrak y$ be as before and let $B$ be a fractional Brownian motion with Hurst parameter $H$. By Corollary~\ref{cor:uniquedistGauss} \emph{$\mathfrak y$-indexed fractional Brownian motion}
  is the (unique in distribution)  \emph{Gaussian   $\mathfrak y$-indexed process } with basic law $\mathcal L(B)$ having the  \emph{pairwise conditional independence property  \eqref{eq:equaldist}}.  \ 
 \end{remark}
 \
Furthermore, \eqref{eq:strongprop} implies the following two corollaries for Gaussian processes: 
  \begin{corollary}\label{cor:multivariateGaussianBP}
   Let $X_{\mathfrak y}$ be a \emph{  $\mathfrak y$-indexed process es}. Assume that the basic law of $X_{\mathfrak y}$ is Gaussian and that $X_{\mathfrak y}$ has the property \eqref{eq:strongprop}. Then $X_{\mathfrak y}$ is a \emph{Gaussian   $\mathfrak y$-indexed process }.
  \end{corollary}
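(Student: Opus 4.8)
The plan is to prove the statement by reducing joint Gaussianity of the whole $\mathfrak y$-indexed process to a statement about finitely many branches and then inducting on the number of branches. Concretely, by Definition~\ref{def:branchingprocess_gauss} it suffices to show that for pairwise distinct branches $b_1,\dots,b_m\in\mathcal B$ and finite sets of times $T_1,\dots,T_m\subset\R$ the vector $\left(X_{b_i}(t)\right)_{t\in T_i,\,1\le i\le m}$ is multivariate Gaussian. The base case $m=1$ is immediate: along a single branch $X_{b_1}\overset{(d)}{=}X$ by \eqref{eq:equalbranchdist}, and the basic law is Gaussian by assumption, so every finite marginal along $b_1$ is Gaussian. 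Throughout I would pass to centered variables, which is harmless since affine shifts preserve Gaussianity and the mean functions agree along all branches.

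For the inductive step I would single out one branch, say $b_m$, put $\mathcal S:=\bigcup_{i<m}b_i$, and set $s^\ast:=\max_{i<m} b_m\wedge b_i$, the latest time at which $b_m$ has split off from the remaining branches. By \eqref{eq:equal}, every coordinate $X_{b_m}(t)$ with $t\le s^\ast$ coincides with the coordinate $X_{b_j}(t)$ on the branch $b_j$ realizing the maximum, hence is already a coordinate of $X_{\mathcal S}$ (after harmlessly enlarging the time sets); denote the vector of all these ``old'' coordinates by $Z_{\mathcal S}$. The genuinely new coordinates form $Z_m:=\left(X_{b_m}(t)\right)_{t\in T_m,\,t> s^\ast}$, a finite marginal of $X_{b_m\backslash\mathcal S}$. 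Since by the induction hypothesis the finite marginals of $X_{\mathcal S}$ are jointly Gaussian, it remains only to show that $(Z_{\mathcal S},Z_m)$ is jointly Gaussian.

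This is where \eqref{eq:strongprop} enters. Applying (CI) with distinguished branch $b=b_m$ gives $\mathcal L\big(Z_m\mid X_{\mathcal S}\big)=\mathcal L\big(Z_m\mid X_{\mathcal S\cap b_m}\big)$, and by \eqref{eq:equal} the conditioning field $X_{\mathcal S\cap b_m}$ generates the same $\sigma$-algebra as the shared past $\left(X_{b_m}(u)\right)_{u\le s^\ast}$. Thus the conditional law of $Z_m$ given $X_{\mathcal S}$ depends only on the past of the single branch $b_m$, which is a Gaussian process. Invoking the standard facts about conditioning inside a Gaussian Hilbert space (the same circle of ideas, going back to Kallenberg, used for Lemma~\ref{eq:corkallenberg} in \appG), this conditional law is Gaussian, its mean $\hat\mu:=\EW[Z_m\mid X_{\mathcal S}]$ is the $L^2$-orthogonal projection of $Z_m$ onto the closed span of the shared past, and its covariance matrix is non-random. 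Because the past variables are coordinates of $X_{\mathcal S}$, the projection $\hat\mu$ lies in the Gaussian Hilbert space generated by $X_{\mathcal S}$, so $(Z_{\mathcal S},\hat\mu)$ is jointly Gaussian; moreover the residual $Z_m-\hat\mu$ has a non-random Gaussian conditional law given $X_{\mathcal S}$ and is therefore independent of $X_{\mathcal S}$, in particular of $(Z_{\mathcal S},\hat\mu)$. Concatenating a Gaussian vector with an independent Gaussian vector preserves joint Gaussianity, so $(Z_{\mathcal S},\hat\mu,Z_m-\hat\mu)$ is jointly Gaussian, and applying the linear map $(z,\mu,r)\mapsto(z,\mu+r)$ shows that $(Z_{\mathcal S},Z_m)$ is jointly Gaussian, completing the induction.

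I expect the main obstacle to be the infinite-dimensional nature of the conditioning: the shared past $\left(X_{b_m}(u)\right)_{u\le s^\ast}$ is an infinite family, so one must justify carefully that conditioning a Gaussian process on such a sub-family yields a Gaussian conditional law with linear mean and non-random covariance, and that the conditional mean stays inside the Gaussian Hilbert space generated by $X_{\mathcal S}$. This is precisely the point where the $L^2$-closedness of Gaussian families and the identification of conditional expectation with orthogonal projection are needed; I would import these from the Gaussian-process prerequisites already collected for \appG rather than re-deriving them.
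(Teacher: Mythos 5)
Your proposal is correct and takes essentially the same route as the paper: an induction over the branches in which \eqref{eq:strongprop} reduces the law of the new branch's post-split coordinates to that of a single Gaussian process conditioned on its shared past, which stays Gaussian with linear conditional mean and deterministic conditional covariance. The paper compresses this into the factorization $\mathcal L\left( \mathcal L\left(\left. X_{ b \backslash \mathcal S } \right\vert X_{ b\cap \mathcal S } \right)\otimes \mathcal L\left( \left.X_{\mathcal S \backslash b} \right\vert X_{ b\cap \mathcal S } \right) \otimes \delta_{ X_{ b\cap \mathcal S } } \right)$ plus the phrase ``by iteration,'' whereas you spell out the Gaussian Hilbert space projection step that justifies gluing the conditional laws back into a jointly Gaussian vector.
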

  More generally: 
  \begin{corollary}\label{eq:corasymjointGauss}
   Let $X_{\mathfrak y}^{(n)}$ be a sequence of \emph{  $\mathfrak y$-indexed process es}. Assume that the basic law of $X_{\mathfrak y}^{(n)}$ is asymptotically Gaussian for $n\rightarrow \infty$ and that the \emph{  conditional independence property   \eqref{eq:strongprop}} holds  for each $n\in\N$. Then $X_{\mathfrak y}^{(n)}$  is asymptotically a \emph{Gaussian   $\mathfrak y$-indexed process }. 
 \end{corollary}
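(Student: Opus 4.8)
The plan is to reduce the claim to finite-dimensional distributions and then to propagate the single-branch asymptotic Gaussianity along the tree by peeling off one branch at a time, following the template of the proof of Corollary~\ref{cor:multivariateGaussianBP} but carrying an error term that vanishes as $n\to\infty$. First I would fix branches $b_1,\dots,b_m\in\mathfrak y$ and times $t_1,\dots,t_m$ and aim to show that $\big(X^{(n)}_{b_1}(t_1),\dots,X^{(n)}_{b_m}(t_m)\big)$ converges in distribution to a multivariate Gaussian; by Definition~\ref{def:branchingprocess_gauss} this is exactly what ``asymptotically a Gaussian $\mathfrak y$-indexed process'' means. I would argue by induction on $m$. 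The base case $m=1$ (with possibly several times on a single branch) is precisely the hypothesis that the basic law is asymptotically Gaussian, since $X^{(n)}_{b_1}\overset{(d)}{=}X^{(n)}$ by \eqref{eq:equalbranchdist}.

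For the inductive step I would single out the branch $b_m$ whose split from the remaining ones is latest, set $\mathcal S:=\bigcup_{i<m}b_i$ and $s:=\max_{i<m} b_m\wedge b_i$. If $t_m\le s$, then by \eqref{eq:equal} the value $X^{(n)}_{b_m}(t_m)$ already occurs on $\mathcal S$ and the claim collapses to $m-1$ branches. If $t_m>s$, I would condition on $X^{(n)}_{\mathcal S}$ and invoke \eqref{eq:strongprop}, which holds for every $n$, to write the joint characteristic function as
\begin{equation*}
\mathbf E\!\left[\exp\!\Big(i\sum_{j<m}\lambda_j X^{(n)}_{b_j}(t_j)\Big)\,\psi_n\big((X^{(n)}_{b_m}(u))_{u\le s}\big)\right],
\end{equation*}
where $\psi_n$ is the single-branch conditional characteristic function of $X^{(n)}_{b_m}(t_m)$ given its own past up to $s$. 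The crucial structural point is that, by \eqref{eq:strongprop} together with \eqref{eq:equal}, $\psi_n$ is dictated by the basic law alone and by the shared lineage, so that the entire joint law is reconstructed from single-branch data exactly as in Proposition~\ref{prop:covdist}.

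As $n\to\infty$ the single-branch law converges to a Gaussian, so $\psi_n$ should converge to the conditional characteristic function of that Gaussian limit, which is the explicit expression $\exp\big(i\lambda_m\widehat X(t_m)-\tfrac12\lambda_m^2 v\big)$ with $\widehat X(t_m)$ a fixed linear functional of the past and $v$ the conditional variance. Combined with the inductive hypothesis on the outer factor, this exhibits the limit as the characteristic function of the Gaussian vector glued together by conditional independence, that is, the one produced by Corollary~\ref{cor:multivariateGaussianBP} applied to the Gaussian basic law obtained in the limit; its covariance is precisely the one pinned down by Proposition~\ref{prop:covdist}.

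The hard part will be the interchange of $\lim_{n\to\infty}$ with the conditioning, since conditional laws are not continuous under weak convergence in general, and here $\psi_n$ a priori lives on an infinite-dimensional path space. I expect to resolve this by exploiting that the conditioning is one-sided (future given past) and that only the covariances of $X^{(n)}_{b_m}(t_m)$ with the finitely many coordinates $\{X^{(n)}_{b_j}(t_j)\}_{j<m}$ survive in the Gaussian limit: one first approximates $\psi_n$ by conditioning on finitely many past times, controls the approximation error in $L^2$ uniformly in $n$ via the finite second moments, and then applies joint finite-dimensional convergence. The finite-fourth-moment assumption \eqref{fourth} and the $L^2$-boundedness already exploited in Section~\ref{sec:condew} supply the uniform integrability needed both to pass to the limit in the product above and to guarantee convergence of the covariances that determine the Gaussian limit.
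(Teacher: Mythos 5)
Your overall architecture is exactly the paper's: reduce to finite-dimensional distributions, peel off one branch at a time, use \eqref{eq:strongprop} at each fixed $n$ to factor the joint law into single-branch conditional laws glued along the tree, and invoke the template of Corollary~\ref{cor:multivariateGaussianBP}. In fact the paper's own proof consists of little more than stating the conditional independence of the futures given the pasts and then saying ``by similar arguments as in the proof of Corollary~\ref{cor:multivariateGaussianBP}'' with Gaussianity replaced by asymptotic Gaussianity; so at the level of strategy you have reproduced the paper's argument in greater detail.

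However, the step you yourself flag as ``the hard part'' --- passing $n\to\infty$ through the conditional characteristic functions $\psi_n$ --- is a genuine gap, and your proposed resolution does not close it. Truncating to finitely many past times and controlling the error in $L^2$ only handles the infinite-dimensionality of the past; the real obstruction survives with a single conditioning variable: $\psi_n$ is an $n$-dependent measurable (in general discontinuous) function of the conditioning variables, so finite-dimensional weak convergence gives no control over $\EW\left[\cdots\psi_n\right]$, and uniform integrability only pins down the limiting covariance (via Proposition~\ref{prop:covdist}), not Gaussianity. The failure mode is that the oscillating parts of the conditional laws on different branches can be correlated through the shared past. Concretely, take one split at time $s$, common past $X^{(n)}_b(t)=Z\sim\mathcal N(0,1)$ for $t\le s$, and on branch $b_i$ set $X^{(n)}_{b_i}(t)=Z+\mathbf 1_{t\ge s'}\,\mathrm{sign}\left(\sin(nZ)\right)|\xi_i|$ for some $s'>s$, with $\xi_1,\xi_2$ i.i.d.\ standard normal independent of $Z$. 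For every $n$ this is a $\mathfrak y$-indexed process satisfying \eqref{eq:strongprop}, and each single branch is asymptotically Gaussian because $\left(Z,\mathrm{sign}\left(\sin(nZ)\right)|\xi_i|\right)\Rightarrow(Z,N)$ with $N$ an independent standard normal; but jointly $\left(X^{(n)}_{b_1}(t),X^{(n)}_{b_2}(t)\right)\Rightarrow\left(Z+\varepsilon|\xi_1|,\,Z+\varepsilon|\xi_2|\right)$ with a single shared sign $\varepsilon$, which is not bivariate Gaussian (the two innovations always carry the same sign). So the conclusion cannot be reached from the stated hypotheses alone: one needs, in addition, convergence of the conditional (prediction) laws themselves, which the paper's application to the Hammond--Sheffield walks does provide, since there the conditional expectation given the past is an explicit linear functional of past increments (Section~\ref{sec:condew}) and the innovations obey a CLT. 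In fairness, the paper's own proof is silent on precisely this point as well, so your attempt is no less complete than the original --- but as written, neither closes the hole.
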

 The following can be seen as a standard example for the fact, that bivariate Gaussianity is not sufficient for multivariate Gaussianity. 
  \begin{example}
   We give an example why in Corollary~\ref{cor:multivariateGaussianBP} the property \eqref{eq:strongprop} instead of \eqref{eq:equaldist} is needed. This example also shows why we need the multivariate Gaussianity in Lemma~\ref{eq:corkallenberg}. Let $\mathfrak y$ have the branches $0, 0r, 0rs$ and let $Z_i$ be independent standard normal random variables. Set
    \begin{eqnarray*}
       X_{0}(t)&:=& \mathbf{1}_{ t\leq r } Z_0 + \mathbf{1}_{ r\leq t \leq s } Z_1 +  \mathbf{1}_{ s\leq t } Z_2,\\
      X_{0r}(t)& :=& \mathbf{1}_{ t\leq r } Z_0 + \mathbf{1}_{ r\leq t \leq s } Z_3 +  \mathbf{1}_{ s\leq t } Z_4,\\
      X_{0rs}(t) &:=& \mathbf{1}_{ t\leq r } Z_0 + \mathbf{1}_{ r\leq t \leq s } Z_3 +  \mathbf{1}_{ s\leq t } \mathrm{sign}\left( Z_2Z_4 \right) | Z_5|. 
    \end{eqnarray*}
    Observe that $\left( X_0(t), X_{0r}(t), X_{0rs}(t) \right), t\geq s$ is not jointly Gaussian, but the \eqref{eq:equaldist} holds while \eqref{eq:strongprop} does not hold since the sign of $X_{0rs}(t), t\geq s$ is determined by $X_{0r}(t), X_{0}(t)$. So in general \eqref{eq:equaldist} does not imply \eqref{eq:strongprop} and the joint Gaussianity of $X_{\mathfrak y}$ is not implied by $X$ being a Gaussian process.  
  \end{example}

   The following example is particularly relevant in the context of this paper. 
\begin{example}
  One easily verifies that the process 
\begin{equation}
  S^{(n)}_{\mathfrak y}:= \left( \left( S_b^{(n)}(t) \right)_{t\in\R} \right)_{b\in \mathcal B}
\end{equation}
defined through \eqref{eq:HSRWbranching} is a  \emph{  $\mathfrak y$-indexed process   with basic law}
\begin{equation}
  \mathcal L \left( \left( S_0^{(n)}(t) \right)_{t\in\R} \right),
\end{equation}
see \eqref{eq:Snbranch0} for the definition of $S_0^{(n)}$. Note that $ S^{(n)}_{\mathfrak y}$ has the \emph{  conditional independence property   \eqref{eq:strongprop}.}  By construction the HS-random walk $ S^{(n)}_{\mathfrak y}$ has the property \eqref{eq:strongprop}. So we obtain that the branching HS-random walk is also asymptotically jointly Gaussian. 
\end{example}

 \section{Proof of Theorem~\ref{th:dbfbm} and Proposition~\ref{prop:condGauss}}\label{sec:proofthdbfbm}
 We will now make use of the general theory in the above section to prove Theorem~\ref{th:dbfbm} and Proposition~\ref{prop:condGauss}.
\begin{proof}[Proof of Proposition~\ref{prop:condGauss}]
 Since $\left(\left( S_0^{(n)}(t) \right)_{t}\right)_{n\in\N}$ is a  sequence  of Gaussian processes and $S_{\mathfrak y}$ obeys the \emph{  conditional independence property  } \eqref{eq:strongprop} we can apply Corollary~\ref{eq:corasymjointGauss} and get the assertion of Proposition~\ref{prop:condGauss}. 
\end{proof}
 Now Theorem~\ref{th:dbfbm} is a direct consequence of Proposition~\ref{prop:condGauss}:
 \begin{proof}[Proof of Theorem~\ref{th:dbfbm}]
  Combining Corollary~\ref{cor:uniquedistGauss} and Proposition~\ref{prop:condGauss} gives the desired result since the property \eqref{eq:strongprop} holds.
\end{proof}
\section{Proof of Theorem~\ref{eq:THmax}}\label{sec:THmaxproof}
We start with a slightly weaker result than Theorem~\ref{eq:THmax}: Let $\eta$ denote the distribution of a (binary branching) Yule tree $\mathfrak y$ (or even a deterministic binary branching tree like in Theorem~\ref{eq:THmax}), then
\begin{equation}\label{eq:ThMaxWeak}
   \liminf\limits_{t\rightarrow \infty} \bigintss_{\mathbb{T}}  \WS_{ \mathfrak y }\left( \frac{ \max\limits_{b\in \mathcal B} B_{b}(t)  }{t^{H+\frac12}}  > a \right) \eta\left(\dif \mathfrak y\right) >0 \quad \mbox { for }a\in\R_{+}  
\end{equation}
and  for all $f(t)\gg t^{H+\frac12}$  
\begin{equation}\label{eq:ThMaxWeak2}
     \limsup\limits_{t\rightarrow \infty} \bigintss_{\mathbb{T}}  \WS_{ \mathfrak y }\left( \frac{ \max\limits_{b\in \mathcal B} B_{b}(t)  }{f(t)}  > a \right) \eta\left(\dif \mathfrak y\right) =0 \quad \mbox { for }a\in\R_{+}  
\end{equation}
 can be obtained by the following well known lemma. Details can be found in \appA.  
\begin{lemma}[{Slepian inequality, \cite[Section~2.10]{slepian}}]\label{th:slepian}
  Let $\sigma^2>0$ and $G$ be a Gaussian vector in $\R^l$ with covariance matrix
  \begin{equation}
    \Sigma_G(i,j)
    \begin{cases}
      =\sigma^2>0,& i=j\\
      \geq 0,&i\not=j,
    \end{cases}
  \end{equation}
  and $G'$ a Gaussian vector in $\R^l$ with covariance matrix
  \begin{equation}
     \Sigma_{G'}(i,j)
    \begin{cases}
      =\sigma^2>0,& i=j\\
      \geq \Sigma_{G}(i,j),&i\not=j.
    \end{cases}
  \end{equation}
  then for all $a=\left( a_1, \ldots a_l\right)\in\R^l $
  \begin{equation}
    \WS\left( G_i < a_i \quad \forall i  \right) \leq \WS\left( G_i' < a_i \quad \forall i  \right).
  \end{equation}
  This gives also
  for $a\in\R$
  \begin{equation}
    \WS\left( \max\limits_{i}G_i > a  \right) \geq \WS\left(\max\limits_{i}  G_i' >a  \right). 
  \end{equation}
\end{lemma}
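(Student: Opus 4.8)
The plan is to prove the first inequality by the classical normal-comparison (Slepian--Kahane) interpolation, and then to deduce the statement about the maximum by complementation. First I would linearly interpolate the two covariance matrices: set
\begin{equation}
  \Sigma_t := (1-t)\,\Sigma_G + t\,\Sigma_{G'}, \qquad t\in[0,1],
\end{equation}
so that $\Sigma_0=\Sigma_G$ and $\Sigma_1=\Sigma_{G'}$. As a convex combination of positive semidefinite matrices, each $\Sigma_t$ is again a legitimate covariance matrix; crucially its diagonal stays constant equal to $\sigma^2$ (since $G$ and $G'$ agree there), while $\tfrac{d}{dt}\Sigma_t(i,j)=\Sigma_{G'}(i,j)-\Sigma_G(i,j)\geq 0$ for $i\neq j$ by hypothesis. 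Let $X^{(t)}\sim\mathcal N(0,\Sigma_t)$, regularizing by $\Sigma_t+\delta I$ and letting $\delta\downarrow 0$ if nondegeneracy is needed (the added $\delta$ cancels in the $t$-derivative, as it affects only the diagonal).

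The heart of the argument is to fix smooth nonincreasing functions $g_i$ approximating the indicators $\mathbf 1_{(-\infty,a_i)}$ from above, to consider
\begin{equation}
  F(t):=\EW\Big[\textstyle\prod_{i=1}^l g_i\big(X^{(t)}_i\big)\Big],
\end{equation}
and to show $F'\geq 0$. Differentiating in $t$ and invoking the Gaussian heat-equation identity $\tfrac{\partial}{\partial\Sigma_{ij}}\EW[\phi(X)]=\EW[\partial^2_{x_ix_j}\phi(X)]$ for $i\neq j$ (valid because the centered Gaussian density solves $\partial_{\Sigma_{ij}}p=\partial^2_{x_ix_j}p$ in the off-diagonal covariance entries) gives
\begin{equation}
  F'(t)=\sum_{i<j}\big(\Sigma_{G'}(i,j)-\Sigma_G(i,j)\big)\,\EW\!\left[g_i'\big(X^{(t)}_i\big)\,g_j'\big(X^{(t)}_j\big)\!\!\prod_{k\neq i,j}\!\! g_k\big(X^{(t)}_k\big)\right].
\end{equation}
No diagonal terms survive since the diagonal of $\Sigma_t$ is constant. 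Each summand is nonnegative: the prefactor is $\geq 0$ by assumption, the two factors $g_i',g_j'$ are both $\leq 0$ so their product is $\geq 0$, and the remaining factors $g_k$ are $\geq 0$. Hence $F'\geq 0$, so $F(0)\leq F(1)$, and letting the $g_i$ decrease to the indicators yields $\WS(G_i<a_i\ \forall i)\leq\WS(G_i'<a_i\ \forall i)$. Taking $a_i=a$ for all $i$ turns this into $\WS(\max_i G_i<a)\leq\WS(\max_i G_i'<a)$, and passing to complements (the boundary $\{\max_i G_i=a\}$ being null for the relevant Gaussian laws) gives the stated inequality for the maximum.

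I expect the main technical obstacle to be the rigorous justification of differentiation under the expectation together with the integration-by-parts identity, and the control of the approximation of the discontinuous indicators by the smooth $g_i$ (dominated convergence, plus the possibly degenerate-covariance case handled by the $\delta$-regularization). These steps are routine but must be carried out with care; everything else is an elementary sign analysis.
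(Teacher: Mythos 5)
The paper does not prove this lemma at all: it is imported as a known result, quoted with the citation to Section~2.10 of the reference \cite{slepian}, and is only \emph{applied} (in Section~\ref{sec:THmaxproof} and Appendix~\ref{sec:slepianapplication}). So there is no internal proof to compare yours against; what you have written is the classical interpolation proof of Slepian's inequality (Slepian's original comparison argument in the smoothed Kahane form), and it is correct. Your structure — the linear interpolation $\Sigma_t=(1-t)\Sigma_G+t\Sigma_{G'}$, which stays positive semidefinite and has constant diagonal $\sigma^2$ because the two matrices agree there; the Plackett/Price identity $\partial_{\Sigma_{ij}}\EW[\phi(X)]=\EW[\partial^2_{x_ix_j}\phi(X)]$ for $i\neq j$; the sign analysis with nonincreasing $g_i\geq 0$; the monotone passage from smooth $g_i$ to indicators; and complementation to get the statement for the maximum — is exactly the standard route, and the technical caveats you flag (differentiation under the expectation, vanishing boundary terms in the double integration by parts, $\delta$-regularization when $\Sigma_t$ is degenerate) are indeed the only points requiring care. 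Two small remarks. First, your formula for $F'(t)$ implicitly uses the convention that the sum runs over unordered pairs $i<j$ with $\Sigma_{ij}=\Sigma_{ji}$ treated as a single parameter; with that convention there is no factor-of-two discrepancy, and the diagonal terms drop out precisely because both covariance matrices share the diagonal $\sigma^2$ (this is where the hypothesis of equal variances enters, and the lemma would be false without it). Second, in the complementation step the null-boundary claim should be justified by the marginals alone: since $\sigma^2>0$, each $G_i$ is a nondegenerate one-dimensional Gaussian, so $\WS(G_i=a)=0$ for every $i$ and hence $\WS(\max_i G_i=a)=0$ even when the joint law is degenerate; your parenthetical remark is correct but this is the cleanest way to see it. In short: the proposal is a valid, self-contained proof of a statement the paper treats as a black box.
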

Essentially the above lemma states that increasing already positive correlations decreases the maximum of a Gaussian vector.

We now prove Theorem~\ref{eq:THmax} by using results of \cite[Theorem~3.1]{GREM2} and connecting the maximum of branching fractional Brownian motion to a \emph{generalized random energy model }(GREM).  In order to connect our setting with  \cite[Theorem~3.1]{GREM2} we briefly describe our problem in their notation.
\begin{remark}
  Bovier and Kurkova \cite{GREM2} consider a Gaussian process $X_{\sigma}$ on the hypercube $ \{-1, 1\}^N$ whose covariance is given by
  \begin{equation}
    \EW\left[ X_{\sigma} X_{\sigma'} \right] = A \left( d_N(\sigma, \sigma') \right) \mbox{ for } d_N(\sigma, \sigma'):= \frac{1}{N}\left( \min\left\{ i: \sigma_i \not=\sigma_i' \right\}-1 \right)
  \end{equation}
  and some probability density function $A$ on $[0,1]$. In our setting the $X_{\sigma}$ will correspond to the $B_b(t)$, and the covariance will be given by $\rho\left( t, t, b\wedge \tilde b \right)$ for two branches $b$ and $\tilde b$, see \eqref{eq:rhoKtt}, \eqref{eq:covhsbranching} and  \eqref{eq:covequality} for the definition of $\rho$. 
\end{remark} 
\begin{proof}[Proof of Theorem~\ref{eq:THmax}]
  To prove Theorem~\ref{eq:THmax} we will apply \cite[Theorem~3.1]{GREM2}.
   We think of a tree $\mathfrak y_{\mathrm{bin}}$ like the one in figure~\ref{fig:fig3}, a tree in which every branch branches into two after time 1. If one now assigns to each part of each branch random variables $\Delta Z_{(b), r}$ like it is done in figure~\ref{fig:fig4} for $t=3$, such that
  \begin{equation}
    \Delta Z_{b,r} \sim \mathcal N \left( 0,  \rho\left( t, t, r  \right)- \rho\left( t, t, r-1  \right)  \right), \qquad r>0
  \end{equation}
  and
  \begin{equation}
    \Delta Z_{b,0} \sim \mathcal N \left( 0, \rho \left(t, t, 0\right) \right),
  \end{equation}
  we get that if we define $Z_{(b)}$ as the sum of the random variables along branch $b$, i.e. for $t=3$
  \begin{eqnarray*}
    Z_{013}&=& \Delta Z_{013,0}+ \Delta Z_{013,1}+\Delta Z_{013,2}+\Delta Z_{013,3}\\
             &=& \Delta Z_{0,0}+ \Delta Z_{01,1}+ \Delta Z_{01,2}+\Delta Z_{013,3},
  \end{eqnarray*}
  the following equality in distribution:
  \begin{equation}
    \left( B_b(t) \right)_{b\in \mathcal B} \overset{(d)}{=} \left( Z_{b} \right)_{b\in \mathcal B}.
  \end{equation}
  Note that $\rho(t,t,s)$ is increasing as a function of $s$, see end of Section~\ref{sec:rhoisinc}. 
  Now we are in the world of a classical \emph{generalized random energy model }(GREM), see e.g. \cite{GREM2}. So in essence we represented the values of the fractional Brownian motions $B_b$ at time $t$ via a GREM. 
  \begin{figure}[h]
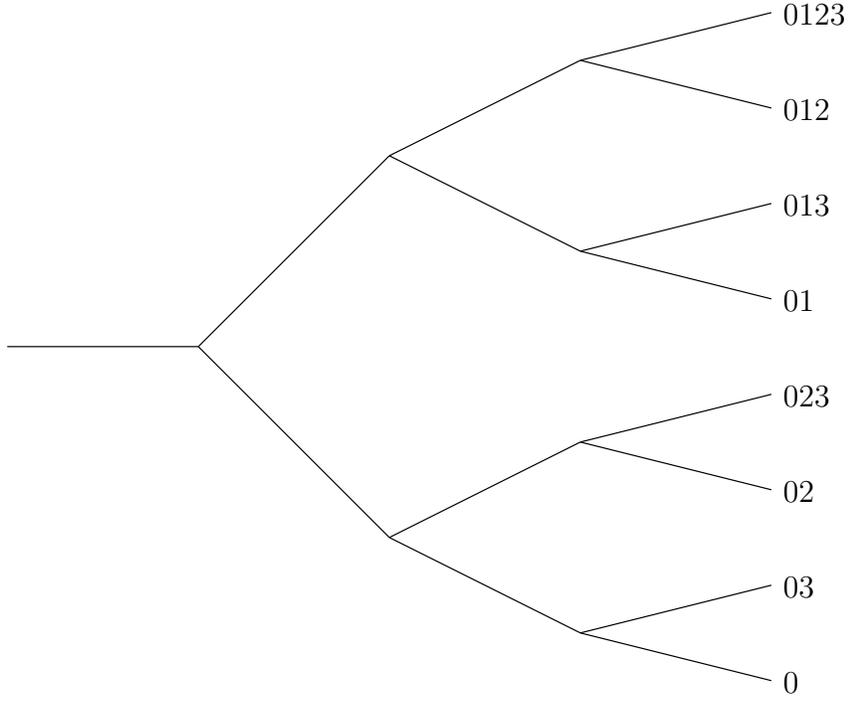

    \centering
    \scalebox{1}{
{\pgfkeys{/pgf/fpu/.try=false}%
\ifx\XFigwidth\undefined\dimen1=0pt\else\dimen1\XFigwidth\fi
\divide\dimen1 by 5649
\ifx\XFigheight\undefined\dimen3=0pt\else\dimen3\XFigheight\fi
\divide\dimen3 by 4749
\ifdim\dimen1=0pt\ifdim\dimen3=0pt\dimen1=3946sp\dimen3\dimen1
  \else\dimen1\dimen3\fi\else\ifdim\dimen3=0pt\dimen3\dimen1\fi\fi
\tikzpicture[x=+\dimen1, y=+\dimen3]
{\ifx\XFigu\undefined\catcode`\@11
\def\temp{\alloc@1\dimen\dimendef\insc@unt}\temp\XFigu\catcode`\@12\fi}
\XFigu3946sp
\ifdim\XFigu<0pt\XFigu-\XFigu\fi
\clip(1188,-7212) rectangle (6837,-2463);
\tikzset{inner sep=+0pt, outer sep=+0pt}
\pgfsetfillcolor{black}
\pgftext[base,left,at=\pgfqpointxy{6075}{-2775}] {\fontsize{12}{14.4}\usefont{T1}{ptm}{m}{n}$0123$}
\pgfsetlinewidth{+7.5\XFigu}
\pgfsetstrokecolor{black}
\draw (2400,-4800)--(3600,-3600);
\draw (2400,-4800)--(3600,-6000);
\draw (3600,-3600)--(4800,-3000);
\draw (3600,-3600)--(4800,-4200);
\draw (3600,-6000)--(4800,-5400);
\draw (3600,-6000)--(4800,-6600);
\draw (4800,-3000)--(6000,-2700);
\draw (4800,-3000)--(6000,-3300);
\draw (4800,-4200)--(6000,-3900);
\draw (4800,-4200)--(6000,-4500);
\draw (4800,-5400)--(6000,-5100);
\draw (4800,-5400)--(6000,-5700);
\draw (4800,-6600)--(6000,-6300);
\draw (4800,-6600)--(6000,-6900);
\pgfsetstrokecolor{white}
\draw (6825,-2475)--(6825,-7200);
\pgftext[base,left,at=\pgfqpointxy{6075}{-6975}] {\fontsize{12}{14.4}\usefont{T1}{ptm}{m}{n}$0$}
\pgftext[base,left,at=\pgfqpointxy{6075}{-6375}] {\fontsize{12}{14.4}\usefont{T1}{ptm}{m}{n}$03$}
\pgftext[base,left,at=\pgfqpointxy{6075}{-5775}] {\fontsize{12}{14.4}\usefont{T1}{ptm}{m}{n}$02$}
\pgftext[base,left,at=\pgfqpointxy{6075}{-5175}] {\fontsize{12}{14.4}\usefont{T1}{ptm}{m}{n}$023$}
\pgftext[base,left,at=\pgfqpointxy{6075}{-4575}] {\fontsize{12}{14.4}\usefont{T1}{ptm}{m}{n}$01$}
\pgftext[base,left,at=\pgfqpointxy{6075}{-3975}] {\fontsize{12}{14.4}\usefont{T1}{ptm}{m}{n}$013$}
\pgftext[base,left,at=\pgfqpointxy{6075}{-3375}] {\fontsize{12}{14.4}\usefont{T1}{ptm}{m}{n}$012$}
\pgfsetstrokecolor{black}
\draw (1200,-4800)--(2400,-4800);
\endtikzpicture}%
}
    \caption{Binary Branching GREM}
    \label{fig:fig3}
  \end{figure}
  \begin{figure}[h]
    \centering
    {\pgfkeys{/pgf/fpu/.try=false}%
\ifx\XFigwidth\undefined\dimen1=0pt\else\dimen1\XFigwidth\fi
\divide\dimen1 by 5577
\ifx\XFigheight\undefined\dimen3=0pt\else\dimen3\XFigheight\fi
\divide\dimen3 by 6252
\ifdim\dimen1=0pt\ifdim\dimen3=0pt\dimen1=3946sp\dimen3\dimen1
  \else\dimen1\dimen3\fi\else\ifdim\dimen3=0pt\dimen3\dimen1\fi\fi
\tikzpicture[x=+\dimen1, y=+\dimen3]
{\ifx\XFigu\undefined\catcode`\@11
\def\temp{\alloc@1\dimen\dimendef\insc@unt}\temp\XFigu\catcode`\@12\fi}
\XFigu3946sp
\ifdim\XFigu<0pt\XFigu-\XFigu\fi
\clip(1110,-7812) rectangle (6687,-1560);
\tikzset{inner sep=+0pt, outer sep=+0pt}
\pgfsetfillcolor{black}
\pgftext[base,left,at=\pgfqpointxy{5925}{-1725}] {\fontsize{12}{14.4}\usefont{T1}{ptm}{m}{n}3}
\pgfsetlinewidth{+7.5\XFigu}
\pgfsetstrokecolor{black}
\draw (2400,-4800)--(3600,-3600);
\draw (3600,-6000)--(4800,-5400);
\draw (4800,-3000)--(6000,-2700);
\draw (4800,-3000)--(6000,-3300);
\draw (4800,-5400)--(6000,-5100);
\draw (4800,-5400)--(6000,-5700);
\draw (4800,-6600)--(6000,-6300);
\draw (2400,-4800)--(3600,-6000);
\draw (3570,-3615)--(4770,-3015);
\draw (3600,-3600)--(4800,-4200);
\draw (4800,-4200)--(6000,-3900);
\draw (4800,-4200)--(6000,-4500);
\draw (3600,-6000)--(4800,-6600);
\pgfsetdash{{+15\XFigu}{+45\XFigu}}{+15\XFigu}
\draw (3600,-1800)--(3600,-7800);
\draw (4800,-1800)--(4800,-7800);
\draw (6000,-1800)--(6000,-7800);
\pgfsetdash{}{+0pt}
\draw (4800,-6600)--(6000,-6900);
\pgfsetstrokecolor{white}
\draw (6675,-2475)--(6675,-7200);
\pgfsetstrokecolor{black}
\pgfsetdash{{+15\XFigu}{+45\XFigu}}{+15\XFigu}
\draw (2400,-1800)--(2400,-7800);
\pgftext[base,left,at=\pgfqpointxy{3750}{-5550}] {\fontsize{12}{14.4}\usefont{T1}{ptm}{m}{n}$\Delta Z_{02,2}$}
\pgftext[base,left,at=\pgfqpointxy{3750}{-4125}] {\fontsize{12}{14.4}\usefont{T1}{ptm}{m}{n}$\Delta Z_{01,2}$}
\pgftext[base,left,at=\pgfqpointxy{3750}{-6525}] {\fontsize{12}{14.4}\usefont{T1}{ptm}{m}{n}$\Delta Z_{0,2}$}
\pgftext[base,left,at=\pgfqpointxy{5175}{-2625}] {\fontsize{12}{14.4}\usefont{T1}{ptm}{m}{n}$\Delta Z_{0123,3}$}
\pgftext[base,left,at=\pgfqpointxy{5175}{-3525}] {\fontsize{12}{14.4}\usefont{T1}{ptm}{m}{n}$\Delta Z_{012,3}$}
\pgftext[base,left,at=\pgfqpointxy{5175}{-3825}] {\fontsize{12}{14.4}\usefont{T1}{ptm}{m}{n}$\Delta Z_{013,3}$}
\pgftext[base,left,at=\pgfqpointxy{5175}{-4650}] {\fontsize{12}{14.4}\usefont{T1}{ptm}{m}{n}$\Delta Z_{01,3}$}
\pgftext[base,left,at=\pgfqpointxy{5175}{-5025}] {\fontsize{12}{14.4}\usefont{T1}{ptm}{m}{n}$\Delta Z_{023,3}$}
\pgftext[base,left,at=\pgfqpointxy{5175}{-5850}] {\fontsize{12}{14.4}\usefont{T1}{ptm}{m}{n}$\Delta Z_{02,3}$}
\pgftext[base,left,at=\pgfqpointxy{5175}{-6225}] {\fontsize{12}{14.4}\usefont{T1}{ptm}{m}{n}$\Delta Z_{03,3}$}
\pgftext[base,left,at=\pgfqpointxy{3675}{-3150}] {\fontsize{12}{14.4}\usefont{T1}{ptm}{m}{n}$\Delta Z_{012,2}$}
\pgftext[base,left,at=\pgfqpointxy{2325}{-4125}] {\fontsize{12}{14.4}\usefont{T1}{ptm}{m}{n}$\Delta Z_{01,1}$}
\pgftext[base,left,at=\pgfqpointxy{2325}{-5550}] {\fontsize{12}{14.4}\usefont{T1}{ptm}{m}{n}$\Delta Z_{0,1}$}
\pgftext[base,left,at=\pgfqpointxy{6000}{-2775}] {\fontsize{12}{14.4}\usefont{T1}{ptm}{m}{n}$Z_{0123}$}
\pgftext[base,left,at=\pgfqpointxy{6075}{-3375}] {\fontsize{12}{14.4}\usefont{T1}{ptm}{m}{n}$Z_{012}$}
\pgftext[base,left,at=\pgfqpointxy{6075}{-3975}] {\fontsize{12}{14.4}\usefont{T1}{ptm}{m}{n}$Z_{013}$}
\pgftext[base,left,at=\pgfqpointxy{6075}{-4575}] {\fontsize{12}{14.4}\usefont{T1}{ptm}{m}{n}$Z_{01}$}
\pgftext[base,left,at=\pgfqpointxy{6075}{-5175}] {\fontsize{12}{14.4}\usefont{T1}{ptm}{m}{n}$Z_{023}$}
\pgftext[base,left,at=\pgfqpointxy{6075}{-6375}] {\fontsize{12}{14.4}\usefont{T1}{ptm}{m}{n}$Z_{03}$}
\pgftext[base,left,at=\pgfqpointxy{6075}{-6975}] {\fontsize{12}{14.4}\usefont{T1}{ptm}{m}{n}$Z_{0}$}
\pgftext[base,left,at=\pgfqpointxy{5175}{-7050}] {\fontsize{12}{14.4}\usefont{T1}{ptm}{m}{n}$\Delta Z_{0,3}$}
\pgftext[base,left,at=\pgfqpointxy{6075}{-5775}] {\fontsize{12}{14.4}\usefont{T1}{ptm}{m}{n}$Z_{02}$}
\pgftext[base,left,at=\pgfqpointxy{1125}{-4650}] {\fontsize{12}{14.4}\usefont{T1}{ptm}{m}{n}$\Delta Z_{0,0}$}
\pgftext[base,left,at=\pgfqpointxy{2400}{-1725}] {\fontsize{12}{14.4}\usefont{T1}{ptm}{m}{n}0}
\pgftext[base,left,at=\pgfqpointxy{3525}{-1725}] {\fontsize{12}{14.4}\usefont{T1}{ptm}{m}{n}1}
\pgftext[base,left,at=\pgfqpointxy{4725}{-1725}] {\fontsize{12}{14.4}\usefont{T1}{ptm}{m}{n}2}
\pgfsetdash{}{+0pt}
\draw (1200,-4800)--(2400,-4800);
\endtikzpicture}%
    \caption{Binary Branching GREM}
    \label{fig:fig4}
  \end{figure}
  Since this is now precisely the setting of \cite[Theorem~3.1]{GREM2}, we get 
  that for a setting like this with
  \begin{equation}
    \Covbin\left[ Z_{b}, Z_{\tilde b} \right]= t^{2H} \left[1 -  \frac{ \sqrt{\pi} 2^{2H-1} }{ \Gamma(1-H)  \Gamma\left(H+\frac12\right) } \left(1- \frac{s}{t} \right)^{2H}\right],
  \end{equation}
  see \eqref{eq:formcov}, 
  for two branches $b$ and $\tilde b$ with $b\wedge \tilde b = s$
  we get that the leading order of the maximum for $t\rightarrow \infty$ is 
  \begin{equation}
    t^{H+\frac12}\sqrt{2} \sqrt{\log(2)} \int_0^1 \sqrt{ \bar{a}(x) } \dif{x},
  \end{equation}
  where
  \begin{equation}
    \bar{a}(x):= \frac{\dif}{\dif{x}} \Covbin\left[ Z_{(b)}, Z_{\left( b xt \right)} \right]. 
  \end{equation}
  So
  \begin{equation}
    \bar{a}(x)
    = \frac{ \sqrt{\pi} 2^{2H-1} }{ \Gamma(1-H)  \Gamma\left(H+\frac12\right) } 2H \left( 1-x \right)^{2H-1},
  \end{equation}
  which gives
  \begin{eqnarray*}
    &&\sqrt{2\log 2} \int_0^1 \sqrt{ \bar{a}(x) } \dif{x}\\
    &=& \sqrt{2\log 2 \frac{ \sqrt{\pi} 2^{2H-1} }{ \Gamma(1-H)  \Gamma\left(H+\frac12\right) } 2H} \int_0^1 \left(1-x\right)^{H-\frac12}\dif{x}\\
                                                       &=&\sqrt{ 2\log 2 \frac{ \sqrt{\pi} 2^{2H} }{ \Gamma(1-H)  \Gamma\left(H+\frac12\right) } H} \left[- \frac{1}{H+\frac12} \left(1-x\right)^{H+\frac12} \right]_{x=0}^1 \\
                                                       &=& \frac{\sqrt{2\log 2 \frac{ \sqrt{\pi} 2^{2H} }{ \Gamma(1-H)  \Gamma\left(H+\frac12\right) } H}}{H+\frac12}.
  \end{eqnarray*}
Since $\bar{a}(x)'<0$  it follows by
\cite[Theorem~3.1]{GREM2} 
that as stated
  \begin{equation}
    \EW_{ \mathfrak y_{\mathrm{bin}} }\left( \frac{ \max\limits_{b\in \mathcal B} B_{b}(t)  }{m(t) }   \right) \rightarrow 1 \quad \mbox { for }t\rightarrow \infty
  \end{equation}
  for $m(t)$ defined by \eqref{eq:mtdef}. 
\end{proof}
 \FloatBarrier

\section{Proof of Theorem~\ref{eq:THmax2}}\label{sec:THmax2proof}
\begin{proof}
We will approximate $\left( B_b(t) \right)_{b\in \mathcal B}$
  by a sequence of GREMs, cf. Section~\ref{sec:THmaxproof}.
  For this purpose we discretize time: Let $K\in\N$. For all $i=1,\ldots, K-1$ we shift all branching events in $\left[ \frac{i}{K} t , \frac{i+1}{K} t\right]$ of the Yule tree $\mathfrak y$ to $\frac{i}{K} t$, such that we can only have branching events at $0,\frac{t}{K}, \frac{2t}{K},\ldots, \frac{(K-1)t}{K}$.  This approximation to the Yule Tree  $\mathfrak y$ will be denoted by $\mathfrak y_{K,t}$, and its collection of branches by $\mathcal B_{K,t}$. 
    In the GREM we set 
    \begin{equation}
        Z_b^{(i)}:= \sum_{\ell=0}^i \Delta Z_b^{(\ell)},
    \end{equation}
    where the $\Delta Z_b^{(\ell)}$ are the increments along the branch $b$, see Figure~\ref{fig:figgremm}.
  We set $Z_b \equiv Z^{(K)}_b $.
For two branches $b$ and $\tilde b$ with $b\wedge \tilde b =s$ by
\eqref{eq:rhoKtt} we have
\begin{equation}
  \Cov\left[ B_{b}(t), B_{\tilde b}(t) \right] = t^{2H} \left[ 1- C_{\rho} \left(1- \frac{s}{t} \right)^{2H} \right]= \rho(t,t,s),
\end{equation}
where
\begin{equation}
  C_{\rho} = \frac{ \sqrt{\pi}2^{2H-1} }{ \Gamma\left( 1-H \right) \Gamma\left( H+\frac12 \right)}.
\end{equation}
Thus, in order to represent the $\mathfrak y_{K,t}$-indexed fractional Brownian motion as a GREM, we choose, for each $b\in \mathcal B_{K,t}$, the distribution of the increment $\Delta Z_{b}^{(i)}$
as
\begin{equation}\label{eq:defdeltaZ}
  \mathcal N \left( 0, \rho\left( t,t,\frac{i}{K}t  \right) -\rho\left( t,t,\frac{i-1}{K}t  \right)  \right) \qquad \mbox{if } i\ge 0
\end{equation}
and
\begin{equation}
  \Delta Z_{b}^{(0)}\equiv  Z_{b}^{(0)} \sim  \mathcal N \left( 0, \rho\left( 0,t,t  \right)  \right)=\mathcal N \left( 0, \left( 1- C_{\rho}\right)t^{2H}  \right).
\end{equation}
Remember that in a GREM  $\Delta Z_b^{(i)}$ and $\Delta Z_{\tilde b}^{(i)}$ coincide if the branches $b$ and $\tilde b$ did not separate till $\frac{iT}{K}$, meaning $b \wedge \tilde b > \frac{iT}{K}$.
  \begin{figure}
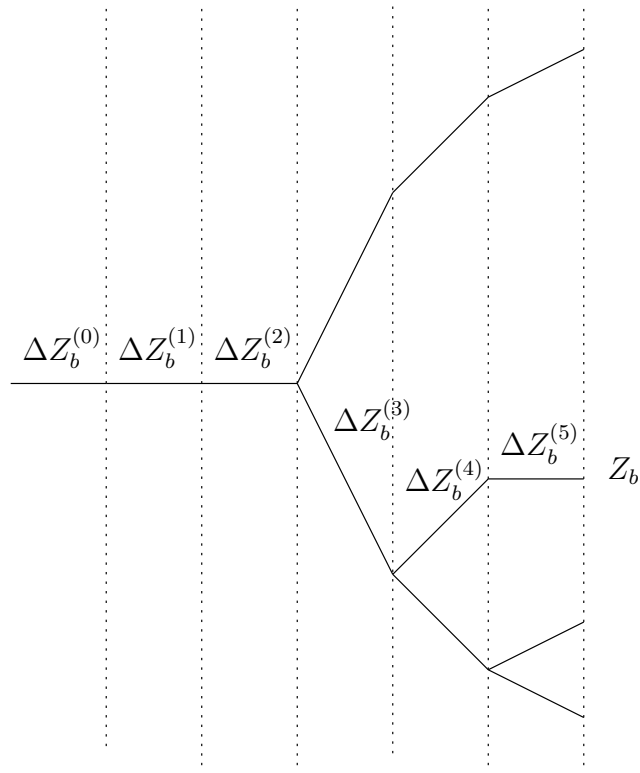

  \centering
  {\pgfkeys{/pgf/fpu/.try=false}%
\ifx\XFigwidth\undefined\dimen1=0pt\else\dimen1\XFigwidth\fi
\divide\dimen1 by 5424
\ifx\XFigheight\undefined\dimen3=0pt\else\dimen3\XFigheight\fi
\divide\dimen3 by 4899
\ifdim\dimen1=0pt\ifdim\dimen3=0pt\dimen1=3946sp\dimen3\dimen1
  \else\dimen1\dimen3\fi\else\ifdim\dimen3=0pt\dimen3\dimen1\fi\fi
\tikzpicture[x=+\dimen1, y=+\dimen3]
{\ifx\XFigu\undefined\catcode`\@11
\def\temp{\alloc@1\dimen\dimendef\insc@unt}\temp\XFigu\catcode`\@12\fi}
\XFigu3946sp
\ifdim\XFigu<0pt\XFigu-\XFigu\fi
\clip(-1212,-6012) rectangle (4212,-1113);
\tikzset{inner sep=+0pt, outer sep=+0pt}
\pgfsetfillcolor{black}
\pgftext[base,left,at=\pgfqpointxy{1425}{-3900}] {\fontsize{12}{14.4}\usefont{T1}{ptm}{m}{n}$\Delta Z_{b}^{(3)}$}
\pgfsetlinewidth{+7.5\XFigu}
\pgfsetstrokecolor{black}
\pgfsetdash{{+15\XFigu}{+45\XFigu}}{+15\XFigu}
\draw (1200,-1200)--(1200,-6000);
\draw (600,-1200)--(600,-6000);
\draw (3000,-1200)--(3000,-6000);
\pgfsetdash{}{+0pt}
\draw (1200,-3600)--(1800,-2400);
\draw (1200,-3600)--(1800,-4800);
\draw (1800,-2400)--(2400,-1800);
\draw (2400,-1800)--(3000,-1500);
\draw (1800,-4800)--(2400,-5400);
\draw (1800,-4800)--(2400,-4200);
\draw (2400,-5400)--(3000,-5700);
\draw (2400,-5400)--(3000,-5100);
\draw (2400,-4200)--(3000,-4200);
\pgfsetstrokecolor{white}
\draw (3975,-5700)--(3975,-5775);
\pgfsetstrokecolor{black}
\draw (0,-3600)--(-600,-3600);
\pgfsetstrokecolor{white}
\draw (4200,-1275)--(4200,-6000);
\pgfsetstrokecolor{black}
\pgfsetdash{{+15\XFigu}{+45\XFigu}}{+15\XFigu}
\draw (0,-1200)--(0,-5925);
\pgfsetstrokecolor{white}
\draw (-1200,-1200)--(-1200,-6000);
\pgfsetstrokecolor{black}
\draw (2400,-1200)--(2400,-6000);
\draw (1800,-1125)--(1800,-5925);
\pgftext[base,left,at=\pgfqpointxy{3150}{-4200}] {\fontsize{12}{14.4}\usefont{T1}{ptm}{m}{n}$Z_{b}$}
\pgftext[base,left,at=\pgfqpointxy{-525}{-3450}] {\fontsize{12}{14.4}\usefont{T1}{ptm}{m}{n}$\Delta Z_{b}^{(0)}$}
\pgftext[base,left,at=\pgfqpointxy{75}{-3450}] {\fontsize{12}{14.4}\usefont{T1}{ptm}{m}{n}$\Delta Z_{b}^{(1)}$}
\pgftext[base,left,at=\pgfqpointxy{675}{-3450}] {\fontsize{12}{14.4}\usefont{T1}{ptm}{m}{n}$\Delta Z_{b}^{(2)}$}
\pgftext[base,left,at=\pgfqpointxy{2475}{-4050}] {\fontsize{12}{14.4}\usefont{T1}{ptm}{m}{n}$\Delta Z_{b}^{(5)}$}
\pgftext[base,left,at=\pgfqpointxy{1875}{-4275}] {\fontsize{12}{14.4}\usefont{T1}{ptm}{m}{n}$\Delta Z_{b}^{(4)}$}
\pgfsetdash{}{+0pt}
\draw (0,-3600)--(1200,-3600);
\endtikzpicture}%
  \caption{A GREM on a discretized Yule tree.}
  \label{fig:figgremm}
\end{figure}
Since we shifted all branching events in the time intervals of length $\tfrac{1}{K}$ to the left by Slepian's Lemma, see Lemma~\ref{th:slepian}, the  order the maximum of the approximating GREM gives an upper bound for the order of the maximum of BFBM.

Now denote the rescaled standard variations of $\Delta Z_0^{(i)}, i\geq 1$ as
\begin{eqnarray}
  \Delta f_i&:=& \sqrt{ 2\frac{t}{K} } \sqrt{\rho\left( t ,t,\frac{i}{K}t \right) -\rho\left( t,t,\frac{i-1}{K}t\right)}\nonumber\\
                 &=&\sqrt{ 2\frac{t}{K} } t^H \left[ -C_{\rho} \left( 1-\frac{i}{K} \right)^{2H}+C_{\rho} \left( 1-\frac{i-1}{K} \right)^{2H} \right]^{\frac12}\label{eq:deltaf}            
\end{eqnarray}
and the rescaled standard variation of $\Delta Z _0^{(0)}$
 \begin{equation}
   \Delta f_0:= \sqrt{ \rho\left( t,t,0  \right)}.
 \end{equation}
\FloatBarrier
We now start to obtain an upper bound on the order of the maximum. 
Since $Z_b$ is equal in distribution to the main branch $Z_0$ we have almost surely
\begin{eqnarray*}
  &&\WS_{ \mathfrak y }\left(  \exists b\in \mathcal B_{K,t}:  Z_{b} >  (1+\varepsilon)\sum_{i=0}^K \Delta f_i 
  \right)\leq |\mathcal B_{K,t}| \WS_{ \mathfrak y }\left( \left.  Z_{0} >   (1+\varepsilon) \sum_{i=0}^K \Delta f_i  \right. \right)\nonumber\\
  &\leq& |\mathcal B_{K,t}| \sum_{ \{(l_i): \sum_{i=1}^K l_i <0 \}}  \WS_{ \mathfrak y }\left( \left.  \Delta Z_{0}^{(i)} \in  \left[ (\Delta f_i- l_i \varepsilon )\pm\varepsilon \right] \quad \forall i \right. \right)\nonumber.
\end{eqnarray*}
Now by independence we get
\begin{eqnarray*}
  &&|\mathcal B_{K,t}| \sum_{ \{(l_i): \sum_{i=1}^K l_i <0 \}}\WS_{ \mathfrak y }\left( \left.  \Delta Z_{0}^{(i)} \in   \left[ \left(\Delta f_i- l_i \varepsilon\right)\pm\varepsilon \right] \quad \forall i \right. \right)\nonumber\\
  &=& |\mathcal B_{t,K}| \sum_{ \{(l_i): \sum_{i=1}^K l_i <0 \}} \prod_{{i=1\normalcolor}}^K \WS_{ \mathfrak y }\left( \left.  \Delta Z_{0}^{(i)} \in  \left[ \left(\Delta f_i- l_i \varepsilon\right)\pm\varepsilon \right] \right. \right).\nonumber
\end{eqnarray*}
\normalcolor
By \eqref{eq:defdeltaZ} we obtain
\begin{eqnarray}
  && |\mathcal B_{t,K}| \sum_{ \{(l_i): \sum_{i=1}^K l_i <0 \}} \prod_{{i=1\normalcolor}}^K \WS_{ \mathfrak y }\left( \left.  \Delta Z_{0}^{(i)} \in \left[ \left(\Delta f_i- l_i \varepsilon\right)\pm\varepsilon \right] \right. \right).\nonumber\\
  &\leq& \mathrm{const}\cdot |\mathcal B_{t,K}| \sum_{ \{(l_i): \sum_{i=1}^K l_i <0 \}} \prod_{{i=1\normalcolor}}^K \exp\left( - \frac{ \left( \Delta f_i- l_i \varepsilon \right)^2 }{2 \left( \rho\left( \frac{i}{K}t,t,t  \right) -\rho\left( \frac{i-1}{K}t,t,t  \right) \right) } \right)  \nonumber\\
  &=& \mathrm{const} \cdot |\mathcal B_{t,K}| \sum_{ \{(l_i): \sum_{i=1}^K l_i <0 \}} \prod_{{i=1\normalcolor} }^K \exp\left( - \frac{t}{K}\cdot \frac{ \left( \Delta f_i- l_i \varepsilon \right)^2 }{ \Delta f_i^2  } \right)  \label{eq:sumoverall}.
\end{eqnarray}
\normalcolor
We now examine the product in \eqref{eq:sumoverall} in more detail:
\begin{equation}
    \prod_{{i=1\normalcolor}}^K \exp\left( - \frac{t}{K}\cdot \frac{ \left( \Delta f_i- l_i \varepsilon \right)^2 }{ \Delta f_i^2  } \right)  
    =\exp\left( - \frac{t}{K}\sum_{{i=1\normalcolor}}^K \frac{ \left( \Delta f_i- l_i \varepsilon \right)^2 }{ \Delta f_i^2  } \right)  \nonumber.
\end{equation}
Writing 
\begin{equation}
    g_i := \sqrt{2 C_{\rho}} \left[  - \left( 1-\frac{i}{K}\right)^{2H} +\left( 1-\frac{i-1}{K}\right)^{2H} \right]^{\frac12}
  \end{equation}
  we can write the exponent as
  \begin{eqnarray}
    &&-\frac{t}{K} \sum_{{i=1\normalcolor}}^K \left[ 1+ \frac{l_i^2 \varepsilon^2}{\Delta f_i^2} - \frac{2 l_i \varepsilon}{\Delta f_i}\right]\nonumber\\
    &=& -t - \frac{t\varepsilon^2 }{K}\sum_{{i=1\normalcolor}}^K\frac{l_i^2 }{\Delta f_i^2} + \frac{2t\varepsilon}{K}\sum_{{i=1\normalcolor}}^K  \frac{l_i}{\Delta f_i}\nonumber\\
    &=& - t - \frac{t}{K}\varepsilon^2  \sum_{{i=1\normalcolor}}^K l_i^2 \frac{K}{g_i^2 t^{2H+1}} + \frac{2t \varepsilon}{K} \sum_{{i=1\normalcolor}}^K l_i \frac{\sqrt{K}}{ t^{H+\frac12} g_i }\nonumber\\
    &=& -t - \frac{\varepsilon^2}{t^{2H}}\sum_{{i=1\normalcolor}}^K \frac{l_i^2}{g_i^2} +\frac{t^{ \frac12-H }}{K^{\frac12}} \varepsilon \sum_{{i=1\normalcolor}}^K \frac{l_i}{g_i}.\nonumber
  \end{eqnarray}
 For $\delta >0$ choose $\varepsilon = \delta t^{H+\frac12}$ and  $K\equiv t^{1-\xi}$ for some $1>\xi>0$  this becomes
  \begin{eqnarray}
    &&-t - t\delta^2 \sum_{{i=1\normalcolor}}^K  \frac{l_i^2}{g_i^2} + t^{\frac12 + \frac{\xi}{2}} \delta\sum_{{i=1\normalcolor}}^K \frac{l_i}{g_i}\nonumber\\
    &=&-t -  K^{ \frac{1}{1-\xi}} \delta^2\sum_{{i=1\normalcolor}}^K  \frac{l_i^2}{g_i^2} + \delta K^{ \frac12 \frac{ 1+\xi }{1-\xi}}\sum_{{i=1\normalcolor}}^K \frac{l_i}{g_i}.\nonumber
  \end{eqnarray}
  Plugging this into \eqref{eq:sumoverall} gives
  \begin{eqnarray}
      &&|\mathcal B_{t,K}| \sum_{ \{(l_i): \sum_{i=1}^K l_i <0 \}} \exp\left( -t -  K^{ \frac{1}{1-\xi}}  \delta^2\sum_{{i=1\normalcolor}}^K  \frac{l_i^2}{g_i^2} + K^{ \frac12 \frac{ 1+\xi }{1-\xi}} \delta\sum_{{i=1\normalcolor}}^K \frac{l_i}{g_i}\right)\nonumber\\
      &=& 
          |\mathcal B_{t,K}  |
    \sum_{ \{(l_i): \sum_{i=1}^K l_i <0 \}}\exp\left( -t -  K^{ \frac{1}{1-\xi}} \delta^2\sum_{{i=1\normalcolor}}^K  \frac{l_i^2}{g_i^2} + K^{ \frac12 \frac{ 1+\xi }{1-\xi}}\delta\sum_{{i=1\normalcolor}}^K \frac{l_i}{g_i}\right).\label{eq:sumsrewritten1}
  \end{eqnarray}
  Remembering the form of the $g_i$ and writing
  \begin{equation}
      h_i := \left( 4H C_{\rho} \left(1-\frac{i}{K}\right)^{2H-1} \right)^{\frac12}
  \end{equation}
  we get that \eqref{eq:sumsrewritten1} is asymptotically equivalent to
  \begin{equation}
    |\mathcal B_{t,K}  |  
    \sum_{ \{(l_i): \sum_{i=1}^K l_i <0 \}} \exp\left( -t -  K^{ \frac{2-\xi}{1-\xi}} \delta^2\sum_{{i=1\normalcolor}}^K  \frac{l_i^2}{h_i^2} + K^{  \frac{ 1 }{1-\xi}}\delta\sum_{{i=1\normalcolor}}^K \frac{l_i}{h_i}\right)\label{eq:sumsrewritten}.
  \end{equation}
  Since $ K^{ \frac{2-\xi}{1-\xi}} \gg K^{  \frac{ 1 }{1-\xi}}$ and $h_i^{-1}$ is bounded this is 
  logarithmically equivalent to
  \begin{equation}
    |\mathcal B_{t,K}  | 
    \sum_{ \{(l_i): \sum_{i=1}^K l_i <0 \}}\exp\left( -t -  K^{ \frac{2-\xi}{1-\xi}} \delta^2\sum_{{i=1\normalcolor}}^K  \frac{l_i^2}{h_i^2} 
      \right).\label{eq:sumallterms}
  \end{equation}
  This we can write as
  \begin{equation}
      \exp(-t)|\mathcal B_{t,K}  | \sum_{r=1}^K \sum_{m\geq 1} \sum_{ \substack{\{(l_i): \sum_{i=1}^K l_i <0,\\ \# \{i: l_i\not=0\}=r,\\
      \max |l_i|=m\}}} \exp\left( -  K^{ \frac{2-\xi}{1-\xi}} \delta^2\sum_{{i=1\normalcolor}}^K  \frac{l_i^2}{h_i^2} 
      \right).\label{eq:sumalltermssecond}
  \end{equation}
  Now observe that for fixed $r$ and $m$ the third sum in \eqref{eq:sumalltermssecond} consists at most of $\binom{K}{r}(2m)^r $ many summands. 
  By Stirling's formula 
  \begin{equation}
    \frac{ \#\left\{ (l_i): \sum_{i=1}^K l_i <0,\# \{i: l_i\not=0\}=r, 
      \max |l_i|=m\right\}}{ \exp\left(K \log(K) + \log(2m)r \right)} \rightarrow 0
  \end{equation}
  as $K\rightarrow \infty$.
  Since $\tfrac{1}{h_i^2}$ is bounded from below by $\tfrac{1}{4H C_{\rho}}$ and at least for one $i\in\{1,\ldots, K\}$ we have $l_i^2 = m^2$, the third sum in \eqref{eq:sumalltermssecond} is subexponentially small in $K$ and $m$ for all fixed $r$ and $m$. 
  We conclude that \eqref{eq:sumalltermssecond} vanishes for $K\rightarrow \infty$.\\
%
%
Finally observe that $\delta$ can be taken to zero such that the logarithmic equivalences of \eqref{eq:sumsrewritten} and \eqref{eq:sumallterms} still hold.
So by
 \begin{equation}
  \frac{ |\mathcal B_{t,K}| }{e^{t}} \Rightarrow \mathrm{Exp}(1)
\end{equation}
the distribution of $|\mathcal B_{t,K}|$ is sufficiently concentrated, such that
averaging \eqref{eq:sumallterms} over the random tree gives a term of order $o(1)$, such that in total  the leading order of the maximum is bounded by
\begin{equation}\label{eq:sumdeltai}
  \sum_{i=0}^K \Delta f_i
\end{equation}
from above.  Note that this is of the same order as $\sum_{i=1}^K \Delta f_i$.\normalcolor\\\\
To obtain the lower bound
we change the above construction, such that we shift all branching events in $\left[ \frac{i}{K} t , \frac{i+1}{K} t\right]$ of the Yule tree $\mathfrak y$ to $\frac{i+1}{K} t$, such that we can only have branching events at $\frac{t}{K},\ldots, \frac{Kt}{K}$. By Slepian's Lemma, see Lemma~\ref{th:slepian}, the order of the maximum of the corresponding GREM gives us a lower bound on the order of the maximum of BFBM. 
We now aim to apply the Paley–Zygmund inequality to show that for some function $\zeta: \R\rightarrow \R$, such that $\zeta(t)\rightarrow 1$ as $t\rightarrow \infty$, the probability to see a branch $b$ with $\Delta Z_b^{(i)} \geq \Delta f_i\cdot \zeta(t)$ for all $i\geq 1$ is positive.  Remember that we are allowed to ignore $\Delta Z_b^{(0)}$ since $\sum_{i=0}^K \Delta f_i \sim \sum_{i=1}^K \Delta f_i$. \normalcolor
Together with the classical concentration inequality by Talagrand, see \cite{Talagrand1995}, this gives that as a lower bound on the leading order of the maximum we obtain \eqref{eq:sumdeltai}: 
Choosing $\xi > \frac23, K=t^{1-\xi}$ and
\begin{equation}
    \zeta(t)= \left(  1+ \frac12\log\left(\frac{1}{4\pi}  \right)\cdot \frac{K}{t} + \frac12\log\left( \frac{K}{t}\right)\cdot \frac{K}{t}  {+\log(K) \cdot \frac{K}{t} } \right)^{\frac12},
\end{equation}
we have {$\zeta(t)\downarrow 1$} as $t\rightarrow \infty$. Now
by a standard Gaussian tail estimate we have
\begin{equation}
\WS_{ \mathfrak y }\left(\Delta Z_0^{(i)} \geq \Delta f_i\cdot \zeta(t) \right) \geq \mathrm{const} \frac{1}{\sqrt{2\pi} \cdot \sqrt{2} \cdot \zeta(t) }\cdot \left(\frac{K}{t}\right)^{\frac12}\cdot\exp\left( - \frac{t}{K}\cdot \zeta(t)^2 \right) =  \exp\left( - \frac{t}{K} \right) \frac{\mathrm{const}}{\zeta(t){ \cdot K }}.
\end{equation}
 Since we shifted all branching events a little bit to the right, this implies that the expected number of branches such that $\Delta Z_b^{(i)} \geq \Delta f_i \cdot \zeta(t)$ for all $i$ is bounded from below by $\mathrm{const} \cdot \zeta(t)^{-K} K^{-K} \exp\left(-t/K\right)$ \normalcolor for $t$ large enough. The probability that two branches $b$ and $\tilde b$ with $b\wedge \tilde b= \tfrac{l}{K}t $ fulfill $\Delta Z_b^{(i)},\Delta Z_{\tilde b}^{(i)} \geq \Delta f_i \cdot \zeta(t) $ for all $i$ is given by 
\begin{equation}
    \left[ \prod_{{i=1\normalcolor}}^{l}\WS_{ \mathfrak y }\left( \Delta Z_{0}^{(i)} \geq \Delta f_i\cdot \zeta(t) \right) \right]\cdot \left[ \prod_{i=l+1}^{K}\WS_{ \mathfrak y }\left( \Delta Z_{0}^{(i)} \geq \Delta f_i\cdot \zeta(t)\right) \right]^2.
\end{equation}
By the corresponding standard Gaussian tail estimate
\begin{equation}
\WS_{ \mathfrak y }\left(\Delta Z_0^{(i)} \geq \Delta f_i\cdot\zeta(t) \right) \leq \mathrm{const} \frac{1}{\sqrt{2\pi } \cdot \sqrt{2} \cdot \zeta(t) { \cdot K }}\cdot \left(\frac{K}{t}\right)^{\frac12}\cdot\exp\left( - \frac{t}{K}\cdot \zeta(t)^2 \right) =  \exp\left( - \frac{t}{K} \right) \frac{\mathrm{const}}{\zeta(t){ \cdot K } }
\end{equation}
 this is bounded from above by
 \begin{equation}
     \frac{\mathrm{const}}{\zeta(t)^{2K-l} { \cdot K^{2K-l} } } \exp\left( - l \frac{t}{K} - 2(K-l) \frac{t}{K}\right) = \frac{\mathrm{const}}{\zeta(t)^{2K-l} { \cdot K^{2K-l} }}\exp\left( - (2K-l) \frac{t}{K}\right).
 \end{equation}
 Denote by $\EW_{\mathcal P}$ the expectation with averages with respect to $\mathcal P$.
 Since now in expectation we have of order
 \begin{equation}
     \exp\left( \frac{t}{K}\left( 2K-l-1 \right)\right)\normalcolor
 \end{equation}
 many choices for two branches that separate at time $l \frac{t}{K}$ 
this directly gives that
\begin{equation}
    \EW_{\mathcal P}\left[\left( \# \left\{ b\in \mathcal B_{t,K}: \Delta Z_b^{(i)} \geq \Delta f_i \cdot \zeta(t) \qquad \forall i \right\}\right)^2 \right]
\end{equation}
is bounded from above by
\begin{equation}
    \mathrm{const}\cdot \sum_{l=0}^K  \exp\left( \frac{t}{K}\left( 2K-l -1\right)\right)  \frac{1}{\zeta(t)^{2K-l} { \cdot K^{2K-l} }}\exp\left( - (2K-l) \frac{t}{K}\right)  \leq \mathrm{const} \cdot \zeta(t)^{-K} K^{-K}\exp\left(-t/K\right).
\end{equation}

For $1>\xi > \frac12$ the factor $\zeta(t)^{-K}$ converges to 1
as $K\rightarrow \infty$. So in total
the Paley–Zygmund inequality gives that there exists a constant $c(H)$, such that for $t$ large enough
\begin{equation}
    \mathcal P\left( \# \left\{ b\in \mathcal B_{t,K}: \Delta Z_b^{(i)} \geq \Delta f_i \qquad \forall i \right\}>0\right)\geq c(H)>0. 
  \end{equation}
  Since $\max_b Z_b$ is a Lipschitz-function of the increments $\Delta Z_b^{(i)}$ we can apply the standard concentration inequality by Talagrand, see  \cite[(1.14)]{Talagrand1995},
such that as a lower bound on the leading order of the maximum we also obtain \eqref{eq:sumdeltai}. 
Now by \eqref{eq:deltaf} 
\begin{eqnarray}
  \Delta f_i &=&   \sqrt{ 2\frac{t}{K} } t^H \left[ -C_{\rho} \left( 1-\frac{i}{K} \right)^{2H}+C_{\rho} \left( 1-\frac{i-1}{K} \right)^{2H} \right]^{\frac12}\nonumber\\
             &=& t^{H+\frac12} \sqrt{2C_{\rho}}  \left[ \frac{-1}{K} \left[ \left( 1-\frac{i}{K} \right)^{2H}-    \left( 1-\frac{i-1}{K} \right)^{2H} \right]  \right]^{\frac12}\nonumber\\
             &\approx& t^{H+\frac12} \sqrt{2C_{\rho}} \sqrt{2H} \left[\frac{1}{K^2} \left( 1-\frac{i}{K} \right)^{2H-1}  \right]^{\frac12}\nonumber\\
             &=&2  t^{H+\frac12} \sqrt{C_{\rho} H}   \left[ \frac{1}{K^2}\left( 1-\frac{i}{K} \right)^{2H-1}  \right]^{\frac12}\nonumber\\
             &=& 2  t^{H+\frac12} \sqrt{C_{\rho} H} K^{-1} \left( 1-\frac{i}{K} \right)^{H-\frac12} \nonumber,
\end{eqnarray}
which for $t\equiv K$ large gives
\begin{eqnarray}
  f\left( \frac{l}{K}t \right)&:=&  \sum_{i=1}^{ l } \Delta f_i\label{eq:flk}=  2  t^{H+\frac12} \sqrt{C_{\rho} H}K^{-1}\sum_{i=1}^{ l }\left( 1-\frac{i}{K} \right)^{H-\frac12} \\
                              &\sim&  2  t^{H+\frac12} \sqrt{C_{\rho} H} \int_0^{ l/K} \left( 1-y \right)^{ H-\frac12 }\dif{y}.\nonumber
\end{eqnarray}
For $l=K$ the left hand side of \eqref{eq:flk} equals \eqref{eq:sumdeltai}. 
This gives the desired result. 
\end{proof}

\newpage
\appendix

 \section{Proof of \eqref{eq:ThMaxWeak} and \eqref{eq:ThMaxWeak2}}\label{sec:slepianapplication}
In this section we give a proof of \eqref{eq:ThMaxWeak} and \eqref{eq:ThMaxWeak2} to show how Slepian's Lemma, Lemma~\ref{th:slepian}, is already applicable to obtain the order of magnitude of $ \max\limits_{b\in \mathcal B} B_{b}(t)$ without the prefactor.  
\begin{proof}[Proof of \eqref{eq:ThMaxWeak} and \eqref{eq:ThMaxWeak2}]
   To prove  \eqref{eq:ThMaxWeak} and \eqref{eq:ThMaxWeak2}   we will make use of Lemma~\ref{th:slepian}. Given a Yule-Tree with $\mathcal N(t) \sim \mathrm{Geo}\left( e^{-t}  \right)$ many leaves at time $t$ we choose $s:= xt, 0<x<1$  arbitrary  and couple a process in the following way: All branching events after time $s$ get deleted. All branching events before time $s$ get shifted to exactly $s$, such that there is one branch till $s$ and at time $s$ exactly $\mathcal N(t)$ branch off simultaneously. (So we only consider a modified version of a subtree. To keep notation consistent we do not modify the names of the branches while modifying the tree.) On this tree we build a BFBM $\tilde B$ with Hurst parameter $H$, covariance structure $\rho$ and branching rate $1$. Obviously for all branches $\tilde b_1, \tilde b_2$ in the new tree and all branches $b_1, b_2$ in $\mathfrak y$ which start before time $s$, we get
  \begin{equation}
    \rho\left(t, t, s  \right)= \Cov\left[ \tilde{B}_{ \tilde b_1 }(t),\tilde{B}_{ \tilde b_2}(t)  \right] \geq \Cov\left[ {B}_{  b_1 }(t),{B}_{  b_2}(t)  \right]. 
  \end{equation}
  By Lemma~\ref{th:slepian} the order of the maximum of our new process is lower than the one of the original process, giving us a lower bound on the maximum of our BFBM. Since
  \begin{equation}
    \tilde B_{b} (t) \overset{(d)}{=} \sqrt{\rho(t,t,s)} Z + \sqrt{ t^{2H} - \rho(t,t,s) }Z_{b}
  \end{equation}
  for $Z, \left( Z_b \right)_{b\in \mathcal B}$ independent and standard Gaussian, we see that the order of the maximum over $\tilde B_b(t)$ is of the same order as
  \begin{equation}
    \max_{b\in \mathcal B}\sqrt{ t^{2H} - \rho(t,t,s) }Z_{b}. 
  \end{equation}
  By a standard tail estimate we get 
  \begin{eqnarray*}
    &&\WS_{ \mathfrak y }\left( r t^H Z_b > Ct^{H+\frac12}+a_t \right)\\
    &=& \WS_{ \mathfrak y }\left( Z_b > \frac{ c t^{\frac12} }{r} + \frac{a_t}{r t^H } \right)\\
    &\sim& \frac{r}{\sqrt{2\pi}C } \exp\left( - \frac12 \log(t) - \frac12 \left( \frac{C^2}{r^2}t + \frac{a_t^2}{r^2 t^{2H}} + \frac{2C}{r} t^{\frac12 - H} a_t \right) \right)\\
    &=& \frac{r}{\sqrt{4\pi }} \exp\left( - \frac12 \log(t) - \frac12 \left( 2 t + o(1) -\log(t) \right) \right)\\
    &=& \frac{1}{\sqrt{4\pi }} \exp\left( -  t \right)\left(1+o(1)\right)
  \end{eqnarray*}
  for
  \begin{equation}
    C= \sqrt{2} r
  \end{equation}
  and
  \begin{equation}
    a_t = - t^{H-\frac12} \log(t) \frac{\rho}{2 \sqrt{2}}.
  \end{equation}
  So
  since $ e^{- t} \mathcal N(t)$ is a martingale with
  \begin{equation}
    \mathrm{Exp}(1)\sim E_1 = \lim\limits_{t\rightarrow \infty} e^{- t} \mathcal N(t),
  \end{equation}
  we immediately get  \eqref{eq:ThMaxWeak} .
  This shows the lower bound. The upper bound follows analogously for $\mathcal N(t)$ independent fractional Brownian motions by simply using a first moment bound. 
\end{proof}
 
\section{Proofs of some properties of branching, $\mathfrak y$-indexed processes  }\label{sec:gaussianstuff}
We start by giving a proof of Lemma~\ref{eq:corkallenberg}:
\begin{proof}[Proof of Lemma~\ref{eq:corkallenberg}]
For all $b' \in \left\{b_1,\ldots, b_m\right\}$ and $t, t'> b'\wedge b$ we have
  \begin{equation}
    \Cov\left[ \left. X_{b}(t), X_{b'}(t')  \right\vert X_{ b \cap b'}  \right]=0
  \end{equation}
  since $X_{\mathfrak y}$ has the \emph{pairwise branching property \eqref{eq:equaldist}}, see \eqref{eq:equaldist}. By construction one can choose $b'$ such that
  \begin{equation}
    X_{\mathcal S \cap b} = X _{b' \cap b}.
  \end{equation}
  This implies that for all branches $\hat b \in \left\{b_1,\ldots, b_m\right\}$ and $t, \hat t > \hat b \wedge b$
   \begin{equation}
     \Cov\left[ \left. X_{b}(t), X_{\hat b}(\hat t)  \right\vert X_{ b \cap \mathcal S}  \right]=
     \Cov\left[ \left. X_{b}(t), X_{\hat b}(\hat t)  \right\vert X_{ b \cap b'}  \right].
   \end{equation}
   Now
   \begin{equation}
     \Cov\left[ \left. X_{b}(t), X_{\hat b}(\hat t)  \right\vert X_{ b \cap b'}  \right]= \Cov\left[ \left. X_{b}(t), X_{\hat b}(\hat t)  \right\vert X_{ \left( b \cap b' \right)\backslash \hat b}, X_{ b\cap \hat b }  \right].
   \end{equation}
   But since
   \begin{equation}
     \left( X_b(t), X_{ \left( b\cap b' \right)\backslash \hat b } \right) \quad \mbox{ and }\quad X_{\hat b}(\hat t)
   \end{equation}
    are conditionally independent given
   \begin{equation}
      X_{ b\cap \hat b }
    \end{equation}
    and
    \begin{equation}
      X_{\hat b}(\hat t), \qquad  X_{ \left( b\cap b' \right)\backslash \hat b }
    \end{equation}
    are conditionally independent given
    \begin{equation}
      X_{b\cap \hat b}
    \end{equation}
    we obtain
      \begin{eqnarray*}
     &&\mathcal L \left( \left.  \left( X_b(t), X_{ \left( b\cap b' \right)\backslash \hat b } \right),  X_{\hat b}(\hat t)  \right\vert X_{ b\cap \hat b }  \right)\\
     &=& \mathcal L \left( \left.  \left( X_b(t), X_{ \left( b\cap b' \right)\backslash \hat b } \right)  \right\vert X_{ b\cap \hat b }  \right)\otimes \mathcal L \left( \left.  X_{\hat b}(\hat t)  \right\vert X_{ b\cap \hat b }  \right)\\
     &=& \left[ \mathcal L \left( \left.  \left( X_b(t) \right)  \right\vert  X_{ b\cap \hat b }  \right)\otimes
\mathcal L \left( \left.  X_{ \left( b\cap b' \right)\backslash \hat b }  \right\vert  X_{ b\cap \hat b }  \right)
             \right] \\
     &&\otimes \mathcal L \left( \left.  X_{\hat b}(\hat t)  \right\vert X_{ b\cap \hat b }  \right)\\
     &=& \left[ \mathcal L \left( \left.  \left( X_b(t) \right)  \right\vert X_{ \left( b\cap b' \right)\backslash \hat b },  X_{ b\cap \hat b }  \right)\otimes
\mathcal L \left( \left.  X_{ \left( b\cap b' \right)\backslash \hat b }  \right\vert  X_{ b\cap \hat b }  \right)
             \right]\\
     && \otimes \mathcal L \left( \left.  X_{\hat b}(\hat t)  \right\vert X_{ \left( b\cap b' \right)\backslash \hat b },  X_{ b\cap \hat b }  \right)\\
   \end{eqnarray*}
   giving us
   \begin{equation}
     \Cov\left[ \left. X_{b}(t), X_{\hat b}(\hat t)  \right\vert X_{ \left( b \cap b' \right)\backslash \hat b}, X_{ b\cap \hat b }  \right]
     =  \Cov\left[ \left. X_{b}(t), X_{\hat b}(\hat t)  \right\vert X_{ b\cap \hat b }  \right]=0. 
   \end{equation}

  Now since $X_{\mathfrak y}$ is Gaussian this implies that $X_{b\backslash \mathcal S }$ and $X_{ \mathcal S \backslash b }$ are  conditionally independent given $X_{\mathcal S \cap b}$. This implies that $X_{\mathfrak y}$ has the \emph{branching property \eqref{eq:strongprop}}.
\end{proof}
\begin{proof}[Proof of Corollary~\ref{cor:multivariateGaussianBP}]
      Let $t_1\leq t_2$, then
    \begin{equation}
      \left( X_b(t) \right)_{t_1 \leq t \leq t_2}
    \end{equation}
    is a Gaussian process for all $b\in \mathcal B$.  Set $\mathcal S:=\bigcup_{i=1}^m b_i$ for  branches $b_1,\ldots, b_m$.
    Then by the \emph{branching property \eqref{eq:strongprop}}
    \begin{eqnarray*}
      \mathcal L \left( X_{ b\cup  \mathcal S } \right)&=& \mathcal L \left(\left. \mathcal L \left(X_{ b \backslash \mathcal S }, X_{\mathcal S \backslash b}  \right\vert X_{ b\cap \mathcal S } \right)\otimes \delta_{ X_{ b\cap \mathcal S } } \right)\\
      &=&\mathcal L \left( \mathcal L \left(\left. X_{ b \backslash \mathcal S }  \right\vert X_{ b\cap \mathcal S } \right)\otimes \mathcal L \left(  \left.X_{\mathcal S \backslash b} \right\vert X_{ b\cap \mathcal S } \right) \otimes \delta_{ X_{ b\cap \mathcal S } } \right).
    \end{eqnarray*}
    Note that $ \mathcal L \left(\left. X_{ b \backslash \mathcal S }  \right\vert X_{ b\cap \mathcal S } \right)$ is the distribution of a Gaussian process conditioned on its past, as is $ \mathcal L \left(  \left.X_{\mathcal S \backslash b} \right\vert X_{ b\cap \mathcal S } \right)$ by iteration, such that in total $ \mathcal L \left( X_{ b\cup  \mathcal S } \right)$ is a Gaussian process. 
\end{proof}
The proof of Corollary~\ref{eq:corasymjointGauss} is now very similar to the one of Corollary~\ref{cor:multivariateGaussianBP}, but Gaussianity gets exchanged with asymptotic Gaussianity. 
 \begin{proof}[Proof of Corollary~\ref{eq:corasymjointGauss}]
     Let $t_1\leq t_2$, then
    \begin{equation}
      \left( X_b^{(n)}(t) \right)_{t_1 \leq t \leq t_2}
    \end{equation}
    is an asymptotically Gaussian process for all $b\in \mathcal B$ and by the \emph{branching property \eqref{eq:strongprop}}  for branches $b_1,\ldots, b_m$ with $\max\limits_{i=1,\ldots, m} b\wedge b_i = s$ 
    \begin{equation}
      \left( X^{(n)}_b(t) \right)_{t \geq s }, \qquad  \left( \left( X^{(n)}_{b_i}(t) \right)_{t\geq b_i \wedge b} \right)_{i=1,\ldots, m}
    \end{equation}
    are conditionally independent given
    \begin{equation}\label{eq:anothergaussianone}
      \left( \left( X^{(n)}_{b_i}(t) \right)_{t\leq b_i \wedge b} \right)_{i=1,\ldots, m}.
    \end{equation}
    By similar arguments as in the proof of Corollary~\ref{cor:multivariateGaussianBP} it follows that the process \eqref{eq:anothergaussianone} is also asymptotically Gaussian. 
 This already gives the assertion. 
\end{proof}
\begin{proof}[Proof of Proposition~\ref{prop:covdist}]
  First observe that by \eqref{eq:equal}
\begin{equation}
    \mathcal L\left( \left. \left( X_{\tilde b} \right)_{ t\geq s} \right\vert  \left( X_{b} \right)_{ t\in\R}   \right)=\mathcal L\left( \left. \left( X_{b} \right)_{ t\geq s} \right\vert  \left( X_{ b} \right)_{ t\leq s}   \right) = \mathcal L\left( \left. \left( X_{\tilde b} \right)_{ t\geq s} \right\vert  \left( X_{ \tilde b} \right)_{ t\leq s}   \right)
  \end{equation}
  is implied by \eqref{eq:equaldist}.
  Now for branches $b, \tilde b \in \mathfrak y$ with $b\wedge \tilde b = s$ by definition we get for $t_1, t_2 > s$
  \begin{eqnarray*}
    &&\Cov\left[ X_{b}(t_1), X_{\tilde b}(t_2) \right]\\
    &=& \EW_{ \mathfrak y }\left[ X_{b}(t_1) X_{\tilde b}(t_2) \right]- \EW_{ \mathfrak y }\left[ X_{b}(t_1) \right]\EW_{ \mathfrak y }\left[X_{\tilde b}(t_2) \right].
  \end{eqnarray*}
  By \eqref{eq:equalbranchdist} we get
  \begin{equation}\label{eq:secondew}
    \EW_{ \mathfrak y }\left[ X_{b}(t_1) \right]\EW_{ \mathfrak y }\left[X_{\tilde b}(t_2) \right]=\EW_{ \mathfrak y }\left[ X_{\tilde b}(t_1) \right]\EW_{ \mathfrak y }\left[X_{ \tilde b}(t_2) \right].
  \end{equation}
  Using \eqref{eq:equaldist} we obtain
  \begin{eqnarray*}
    &&\EW_{ \mathfrak y }\left[ X_{b}(t_1) X_{\tilde b}(t_2) \right]\\
    &=& \EW_{ \mathfrak y }\left[ \EW_{ \mathfrak y }\left[\left. X_{b}(t_1) X_{\tilde b}(t_2) \right\vert \sigma\left( X_{\tilde b}(t), t\leq t_2 \right)  \right] \right]\\
    &=& \EW_{ \mathfrak y }\left[ \EW_{ \mathfrak y }\left[\left. X_{b}(t_1) \right\vert \sigma\left( X_{\tilde b}(t), t\leq t_2 \right)  \right]  X_{\tilde b}(t_2) \right]\\
     &=& \EW_{ \mathfrak y }\left[ \EW_{ \mathfrak y }\left[\left. X_{b}(t_1) \right\vert \sigma\left( X_{\tilde b}(t), t\leq s \right)  \right]  X_{\tilde b}(t_2) \right].
  \end{eqnarray*}
  And by \eqref{eq:equalbranchdist} we can exchange $X_{b\normalcolor}(t_1)$ and $X_{\tilde b\normalcolor}(t_1)$, meaning
   \begin{eqnarray}
     &&\EW_{ \mathfrak y }\left[ \EW_{ \mathfrak y }\left[\left. X_{b\normalcolor}(t_1) \right\vert \sigma\left( X_{\tilde b}(t), t\leq s \right)  \right]  X_{\tilde b}(t_2) \right]\nonumber \\
     &&= \EW_{ \mathfrak y }\left[ \EW_{ \mathfrak y }\left[\left. X_{\tilde b\normalcolor}(t_1) \right\vert \sigma\left( X_{\tilde b}(t), t\leq s \right)  \right]  X_{\tilde b}(t_2) \right].\label{eq:firstew}
   \end{eqnarray}
  Together \eqref{eq:firstew} and \eqref{eq:secondew} give
  \begin{eqnarray*}
    &&\Cov\left[ X_{b}(t_1), X_{\tilde b}(t_2) \right]\\
    &&= \EW_{ \mathfrak y }\left[ \EW_{ \mathfrak y }\left[\left. X_{\tilde b}(t_1) \right\vert \sigma\left( X_{\tilde b}(t), t\leq s \right)  \right]  X_{\tilde b}(t_2) \right]\\
    &&- \EW_{ \mathfrak y }\left[ X_{\tilde b}(t_1) \right]\EW_{ \mathfrak y }\left[X_{ \tilde b}(t_2) \right].
  \end{eqnarray*}
  This gives that for branches $b,\tilde b \in \mathfrak y $ with $b\wedge \tilde b=s$ and $t_1, t_2> s$
  \begin{equation}
    \Cov\left[ X_{b}(t_1), X_{\tilde b}(t_2) \right]
  \end{equation}
  is uniquely determined by the distribution of the process along one branch, i.e. by
  \begin{equation}
    \mathcal L \left( \left( X_b(t) \right)_{t\in\R} \right).
  \end{equation}
  If now for all branches $b_1, \ldots b_m$ and $t_1,\ldots t_m\in\R$
  \begin{equation}
    \left( X_{b_1}(t_1), \ldots, X_{b_m}(t_m) \right)
  \end{equation}
  is jointly Gaussian we get that the distribution of the \emph{tree-indexed branching stochastic process}
   \begin{equation}
    \left( \left( X_{b}(t) \right)_{t\in\R} \right)_{b\in \mathcal B}
  \end{equation}
  is uniquely determined by the distribution along one branch, i.e. by
   \begin{equation}
    \mathcal L \left( \left( X_b(t) \right)_{t\in\R} \right)
  \end{equation}
  for any branch $b\in \mathcal B$. 
\end{proof}

\section{Covariances of different branches}\label{sec:proof:th:dbfbm}
To prove Theorem~\ref{th:dbfbm} by Proposition~\ref{prop:condGauss} it is enough to analyze
\begin{equation}\label{eq:covges}
  \Cov\left[ \sum_{l=1}^{m_1} Y_{b,l} , \sum_{l=1}^{m_2} Y_{\tilde b,l}  \right]\quad \mbox{ for } m_1 = t_1 n, m_2 = t_2 n, t_1>t_2>s, b\in \mathcal B
\end{equation}
for two branches $b, \tilde b$ with $b\wedge \tilde b = s$         
since by \cite[Proposition~1.3]{IW} tightness is given and asymptotic joint Gaussianity is given by Proposition~\ref{prop:condGauss}. 
We now calculate the covariance between $S^{(n)}_{b}(t_1)$ and $S^{(n)}_{\tilde b}(t_2)$ for $t_1 > t_2 > s$. 

First set
\begin{equation}\label{eq:defc1}
  C_1 := C_2 \frac{ \Gamma(1-2\alpha) }{\Gamma(\alpha)\Gamma(1-\alpha)^3},
\end{equation}
for
\begin{equation}\label{eq:defc2}
  C_2 := \frac{1}{ \sum_{l\geq 0}q_l^2 } >0,
\end{equation}
and
\begin{equation}\label{eq:defc3}
  C_3 := \frac{1}{\alpha(2\alpha+1)C_1} = \frac{C_2}{\alpha(2\alpha+1)} \cdot \frac{\Gamma(1-2\alpha)}{\Gamma(\alpha)\Gamma(1-\alpha)^3}.
\end{equation}
Recalling \eqref{eq:coalprob} we get
\begin{equation}\label{eq:coalasymp}
  C_1 n^{2\alpha-1} \sim \WS_{ \mathfrak y }\left( 0 \sim n \right),
\end{equation}
see also \cite[Proposition~2.1]{IW}. 
Denote by $A_0$ the ancestral line of zero. Set
\begin{equation}\label{eq:Cq}
  C_q:= \frac{1}{\Gamma(\alpha)\Gamma(1-\alpha)},
\end{equation}
then
\begin{equation}\label{eq:qnasymp}
  C_q n^{\alpha-1} \sim q_n := \WS_{ \mathfrak y }\left( -n \in A_0 \right), 
\end{equation}
see \eqref{qas} and for further reference \cite[(1.10)]{IW}. 
Recalling \eqref{asvar}, see also \cite[(1.8)]{IW}, we get
\begin{equation}\label{eq:varnasymp}
  C_3 n^{2\alpha+1} \sim \Var\left[ \sum_{l=1}^n Y_{l} \right].
\end{equation}
Let's start by using bilinearity and the fact that $ Y_{(b,i)}= Y_{(\tilde b,i)}=: Y_i$ for $i\leq sn$: 
\begin{eqnarray}
  && \Cov\left[ \sum_{l=1}^{m_1} Y_{b,l} , \sum_{l=1}^{m_2} Y_{\tilde b,l}  \right]\nonumber\\
  &=& \Cov\left[ \sum_{l=1}^{sn} Y_{b,l} +\sum_{l=sn+1}^{m_1} Y_{b,l} , \sum_{l=1}^{sn} Y_{\tilde b,l} +\sum_{l=sn+1}^{m_2} Y_{\tilde b,l}  \right]\nonumber\\
  &=& \Cov\left[ \sum_{l=1}^{sn} Y_{b,l}, \sum_{l=1}^{sn} Y_{\tilde b, l}  \right]+  \Cov\left[ \sum_{l=1}^{sn} Y_{b,l} ,\sum_{l=sn+1}^{m_2} Y_{\tilde b, l}  \right]\nonumber\\
  &&+ \Cov\left[ \sum_{l=sn+1}^{m_1} Y_{b,l} , \sum_{l=1}^{sn} Y_{\tilde b, l} \right] \nonumber\\
  &&+ \Cov\left[\sum_{l=sn+1}^{m_1} Y_{b,l} ,\sum_{l=sn+1}^{m_2} Y_{\tilde b, l}  \right]\nonumber\\
  &=& \Var\left[ \sum_{l=1}^{sn} Y_{l}  \right]+  \Cov\left[ \sum_{l=1}^{sn} Y_{l} ,\sum_{l=sn+1}^{m_2} Y_{\tilde b, l}  \right]\nonumber\\
  &&+ \Cov\left[ \sum_{l=sn+1}^{m_1} Y_{(b,l)} , \sum_{l=1}^{sn} Y_{b,l} \right] \nonumber\\
  &&+ \Cov\left[\sum_{l=sn+1}^{m_1} Y_{b,l} ,\sum_{l=sn+1}^{m_2} Y_{\tilde b, l}  \right]\nonumber\\
  &=& \Var\left[ \sum_{l=1}^{sn} Y_{l}  \right]\label{eq:cov2}\\
  &&+  \Cov\left[ \sum_{l=1}^{sn} Y_{l} ,\sum_{l=sn+1}^{m_1} Y_{b,l}  \right] 
  +  \Cov\left[ \sum_{l=1}^{sn} Y_{l} ,\sum_{l=sn+1}^{m_2} Y_{b,l}  \right]\label{eq:cov2b}
  \\
  &&+ \Cov\left[\sum_{l=sn+1}^{m_1} Y_{b,l} ,\sum_{l=sn+1}^{m_2} Y_{\tilde b, l}  \right]. \label{eq:cov3}
\end{eqnarray}
We now need to check the asymptotics of \eqref{eq:cov2}, \eqref{eq:cov2b} and \eqref{eq:cov3}. These summands can be interpreted in the following way:
\begin{enumerate}[label=(\alph*)]
\item The summand in \eqref{eq:cov2} is the variance of the random walk along one branch.
\item The first and second summand in \eqref{eq:cov2b} represent the covariance between the increments before and after a branching event.
\item  \eqref{eq:cov3} represents the covariance between the increments of two branches after a branching event. 
\end{enumerate}
\paragraph*{\textbf{Variance of the random walk accumulated before the branching event.}}
For the first summand in \eqref{eq:cov2} we get
\begin{equation}
  \Var\left[ \sum_{l=1}^{sn} Y_{l}  \right] \sim  C_3 (sn)^{2\alpha+1},
\end{equation}
see \eqref{eq:varnasymp}.
\paragraph*{\textbf{Covariance between the increments of the random walk before and after the branching event.}}
We now analyze the second summand in \eqref{eq:cov2}. 
By $Y_l^2=1$ we get
\begin{equation}\label{eq:covidcoal}
  \Cov\left[Y_l, Y_{b,k}\right]= \WS_{ \mathfrak y }\left(l \sim (b,k)\right).
\end{equation}
This gives 
\begin{eqnarray}
  &&\Cov\left[ \sum_{l=1}^{sn} Y_{l} ,\sum_{l=sn+1}^{m_1} Y_{b,l}  \right]\nonumber\\
  &=&  \sum_{l=1}^{sn}\sum_{k=sn+1}^{m_1}  \Cov\left[  Y_{l} , Y_{b,k}  \right]\nonumber\\
  &=&  \sum_{l=1}^{sn}\sum_{k=sn+1}^{m_1} \WS_{ \mathfrak y }\left( l \sim k \right)\nonumber.
\end{eqnarray}
Plugging in the asymptotics \eqref{eq:coalasymp} for $n\rightarrow \infty$ gives
\begin{eqnarray}
      &&\sum_{l=1}^{sn}\sum_{k=sn+1}^{m_1} \WS_{ \mathfrak y }\left( l \sim k \right)\nonumber\\
    &\sim & C_1  \sum_{l=1}^{sn}\sum_{k=sn+1}^{m_1} (k-l)^{2\alpha-1}\nonumber\\
    &=& C_1 n^{2\alpha-1}\cdot n^2 \cdot \frac{1}{n}  \sum_{l=1}^{sn}\frac1n \sum_{k=sn+1}^{{x_{1}}n} \left( \frac{k}{n}-\frac{l}{n} \right)^{2\alpha-1}\nonumber\\
    &\sim& C_1 n^{2\alpha+1} \int_0^s \int_s^{{x_{1}}} \left( y_2-y_1 \right)^{2\alpha-1} \dif{y_2} \dif{y_1}.\label{eq:int1}
\end{eqnarray}
The integral in \eqref{eq:int1} can be computed explicitly: The inner integral is given by
\begin{eqnarray*}
  &&\int_s^{x_{1}} \left( y_2-y_1 \right)^{2\alpha-1} \dif{y_2}\\
  &=& \left[ \frac{1}{2\alpha}  \left( y_2-y_1 \right)^{2\alpha}  \right]_{ y_2=s }^{x_{1}}\\
  &=& \frac{1}{2\alpha} \left[ ({x_{1}}-y_1)^{2\alpha}- (s-y_1)^{2\alpha} \right].
\end{eqnarray*}
For the two summands we compute the outer integral: The first one gives
\begin{eqnarray*}
  &&\int_0^s \left( {x_{1}}-y_1 \right)^{2\alpha} \dif{y_1}\\
  &=& \left[ - \frac{1}{2\alpha+1} \left( {x_{1}}-y_1 \right)^{2\alpha+1} \right]_{y_1=0}^s \\
  &=& \frac{1}{2\alpha+1} \left[ {x_{1}}^{2\alpha+1} - ({x_{1}}-s)^{2\alpha+1} \right].
\end{eqnarray*}
The second gives
\begin{eqnarray*}
  &&\int_0^s \left(s-y_1 \right)^{2\alpha} \dif{y_2}\\
  &=& \left[ - \frac{1}{2\alpha+1} \left(s-y_1\right)^{2\alpha+1} \right]_{y_1=0}^s \\
  &=& \frac{ s^{2\alpha+1} }{2\alpha+1}.
\end{eqnarray*}
So in total we get
\begin{equation}
  \int_0^s \int_s^{{x_{1}}} \left( y_2-y_1 \right)^{2\alpha-1} \dif{y_2} \dif{y_1} = \frac{1}{2\alpha \left( 2\alpha+1 \right)} \left[ {x_{1}}^{2\alpha+1} - ({x_{1}}-s)^{2\alpha+1} -s^{2\alpha+1} \right]. 
\end{equation}
For the second summand in \eqref{eq:cov2} this gives
\begin{eqnarray*}
  && \Cov\left[ \sum_{l=1}^{sn} Y_{l} ,\sum_{l=sn+1}^{m_1} Y_{b,l}  \right]\\
  &\sim& n^{2\alpha+1} C_1  \frac{1}{2\alpha \left( 2\alpha+1 \right)} \left[ {x_{1}}^{2\alpha+1} - ({x_{1}}-s)^{2\alpha+1} -s^{2\alpha+1} \right]. 
\end{eqnarray*}
Analogously to \eqref{eq:cov2} we get
\begin{eqnarray*}
  && \Cov\left[ \sum_{l=1}^{sn} Y_{l} ,\sum_{l=sn+1}^{m_2} Y_{\tilde b,l}  \right]\\
  &\sim& n^{2\alpha+1} C_1  \frac{1}{2\alpha \left( 2\alpha+1 \right)} \left[ {x_{2}}^{2\alpha+1} - ({x_{2}}-s)^{2\alpha+1} -s^{2\alpha+1} \right]. 
\end{eqnarray*}
\paragraph*{\textbf{Covariance of the increments of the two branches}}
We now aim to compute \eqref{eq:cov3}, the covariance of the increments of two branches, namely
\begin{equation}
  \Cov\left[\sum_{l=sn+1}^{m_1} Y_{b,l} ,\sum_{l=sn+1}^{m_2} Y_{\tilde b, l}  \right],
\end{equation}
by analyzing $ \WS_{ \mathfrak y }\left( (b,l) \sim (\tilde b, k) \right)$.
Using \eqref{eq:covidcoal} we can write \eqref{eq:cov3} as
\begin{eqnarray*}
  &&\Cov\left[\sum_{l=sn+1}^{m_1} Y_{b,l} ,\sum_{l=sn+1}^{m_2} Y_{\tilde b, l}  \right]\\
  &=&\sum_{l=sn+1}^{m_1} \sum_{k=sn+1}^{m_2} \Cov\left[ Y_{b,l} , Y_{\tilde b, k}  \right]\\
  &=&\sum_{l=sn+1}^{m_1} \sum_{k=sn+1}^{m_2}  \WS_{ \mathfrak y }\left( (b,l) \sim (\tilde b, k) \right).
\end{eqnarray*}
For further analysis we will make use of  Proposition~\ref{prop:coalbranches}.
Proposition~\ref{prop:coalbranches} now gives
\begin{eqnarray*}
  &&\sum_{l=sn+1}^{m_1} \sum_{k=sn+1}^{m_2}  \WS_{ \mathfrak y }\left( (b,l) \sim (\tilde b, k) \right)\\
  &=& C_2 \sum_{l=sn+1}^{m_1} \sum_{k=sn+1}^{m_2} \sum_{r\geq 0} q_{l-sn+r} q_{k-sn+r}\\
  &=& C_2 \sum_{l=1}^{(t_1-s)n} \sum_{k=1}^{(t_2-s)n} \sum_{r \geq 0} q_{l+r}q_{k+r}.
\end{eqnarray*}
By plugging in \eqref{eq:qnasymp} we obtain
\begin{eqnarray*}
  &&C_2 \sum_{l=1}^{(t_1-s)n} \sum_{k=1}^{(t_2-s)n} \sum_{r \geq 0} q_{l+r}q_{k+r}\\
  &\sim& C_2 C_q^2 \sum_{l=1}^{(t_1-s)n} \sum_{k=1}^{(t_2-s)n} \sum_{r \geq 0} (l+r)^{\alpha-1} (k+r)^{\alpha-1}\\
  &=& C_2 C_q^2 n^{2\alpha-2} \cdot n^{3} \cdot \frac1n \sum_{l=1}^{(t_1-s)n} \frac1n  \sum_{k=1}^{(t_2-s)n} \frac1n  \sum_{r \geq 0} \left( \frac{l+r}{n} \right)^{\alpha-1} \left( \frac{k+r}{n} \right)^{\alpha-1}\\
  &\sim& C_2 C_q^2 n^{2\alpha+1} \int\limits_0^{t_1-s} \int\limits_0^{t_2-s} \int\limits_0^\infty (y_3+y_1)^{\alpha-1} (y_3+y_2)^{\alpha-1} \dif{y_3} \dif{y_2} \dif{y_1}.
\end{eqnarray*}
Finally we get
\begin{eqnarray*}
  &&\Cov\left[\sum_{l=sn+1}^{m_1} Y_{b,l} ,\sum_{l=sn+1}^{m_2} Y_{\tilde b, l}  \right]\\
  &\sim& C_2 C_q^2 n^{2\alpha+1} \int\limits_0^{t_1-s} \int\limits_0^{t_2-s} \int\limits_0^\infty (y_3+y_1)^{\alpha-1} (y_3+y_2)^{\alpha-1} \dif{y_3} \dif{y_2} \dif{y_1}. 
\end{eqnarray*}
\paragraph*{\textbf{Putting the pieces together}}\label{sec:rhoisinc}
Putting it all together 
gives 
\begin{eqnarray*}
  && \Cov\left[ \sum_{l=1}^{m_1} Y_{b,l} , \sum_{l=1}^{m_2} Y_{\tilde b, l}  \right]\\
  &\sim&n^{2\alpha+1}  \left.\Bigg[C_3 s^{2\alpha+1}\right.\\
  &&\left.+  C_1  \frac{1}{2\alpha \left( 2\alpha+1 \right)} \left[ t_1^{2\alpha+1} - (t_1-s)^{2\alpha+1} -s^{2\alpha+1} \right] \right.\\
  &&\left.+  C_1  \frac{1}{2\alpha \left( 2\alpha+1 \right)} \left[ t_2^{2\alpha+1} - (t_2-s)^{2\alpha+1} -s^{2\alpha+1} \right] \right.\\
  && \left. +  C_2 C_q^2  \int_0^{t_1-s} \int_0^{t_2-s} \int_0^\infty (y_3+y_1)^{\alpha-1} (y_3+y_2)^{\alpha-1} \dif{y_3} \dif{y_2} \dif{y_1}    \right.\Bigg].
\end{eqnarray*}
Plugging in \eqref{eq:defc1}, \eqref{eq:defc2} and \eqref{eq:defc3}, namely
\begin{equation}
  C_1 = C_2 \frac{ \Gamma(1-2\alpha) }{\Gamma(\alpha)\Gamma(1-\alpha)^3},
\end{equation}
and
\begin{equation}
  C_3 = \frac{1}{\alpha (2\alpha+1)} C_1 = \frac{C_2 }{\alpha (2\alpha+1)}  \frac{ \Gamma(1-2\alpha) }{\Gamma(\alpha)\Gamma(1-\alpha)^3}
\end{equation}
and \eqref{eq:Cq}, namely
\begin{equation}
  C_q = \frac{1}{\Gamma(\alpha)\Gamma(1-\alpha)},
\end{equation}
we can write
\begin{eqnarray*}
  &&\rho^{(n)}(t_1, t_2,s)\\
  &:=& \Cov\left[ \sum_{l=1}^{t_1 n} Y_{b,l} , \sum_{l=1}^{t_2 n} Y_{\tilde b, l}  \right]\\
  &\sim&n^{2\alpha+1}  C_2 \left[ s^{2\alpha+1} \frac{1}{\alpha (2\alpha+1)}  \frac{ \Gamma(1-2\alpha) }{\Gamma(\alpha)\Gamma(1-\alpha)^3}  \phantom{ \frac{ \int\limits_0^{t_1-s}}{\Gamma(aaa)^2 } }  \right.\\
  &&+\left.   \frac{ \Gamma(1-2\alpha) }{\Gamma(\alpha)\Gamma(1-\alpha)^3}  \frac{1}{2 \alpha \left( 2\alpha+1 \right)} \left[ t_1^{2\alpha+1} - (t_1-s)^{2\alpha+1} -s^{2\alpha+1} \right] \right.\\
  &&+\left.   \frac{ \Gamma(1-2\alpha) }{\Gamma(\alpha)\Gamma(1-\alpha)^3}  \frac{1}{2\alpha \left( 2\alpha+1 \right)} \left[ t_2^{2\alpha+1} - (t_2-s)^{2\alpha+1} -s^{2\alpha+1} \right] \right.\\
  &&+ \left.    \frac{ \int\limits_0^{t_1-s} \int\limits_0^{t_2-s} \int\limits_0^\infty (y_3+y_1)^{\alpha-1} (y_3+y_2)^{\alpha-1} \dif{y_3} \dif{y_2} \dif{y_1} }{\Gamma(\alpha)^2 \Gamma(1-\alpha)^2 }      \right].
\end{eqnarray*}
However, writing it this way makes it hard to see that
\begin{equation}\label{eq:covinc}
  \Cov\left[ \sum_{l=1}^{m_1} Y_{b,l} , \sum_{k=1}^{m_2} Y_{\tilde b, k}  \right]
\end{equation}
is indeed increasing in $s$ for fixed $m_1, m_2$.
Writing
\begin{equation}
  \Cov\left[ \sum_{l=1}^{m_1} Y_{b,l} , \sum_{k=1}^{m_2} Y_{\tilde b, k}  \right]=\sum_{l=1}^{m_1}  \sum_{k=1}^{m_2} \WS_{ \mathfrak y }\left( (b,l)\sim (\tilde b, k) \right)
\end{equation}
gives us that \eqref{eq:covinc} is increasing in $s$ for fixed $m_1, m_2$ since 
\begin{equation}
  \WS_{ \mathfrak y }\left( (b,l)\sim (\tilde b, k) \right)
\end{equation}
is increasing in $s$.
%
This is all we needed to show for the assertion of Theorem~\ref{th:dbfbm}.
\section{Proof of Proposition~\ref{prop:coalbranches}}\label{sec:proofprop:coalbranches}
The proof of Proposition~\ref{prop:coalbranches} is similar to the one in \cite[Proposition~2.1]{IW}.
The idea is based on considering two independent ancestral lines, which means lines that do not coalesce even if they meet. For this two lines we estimate the number of common ancestors and decompose it with respect to their most common ancestor.
%
\begin{proof}[Proof of Proposition~\ref{prop:coalbranches}]
  Assume without loss of generality $j>i$ and let $b, \tilde b$ be two branches with $b\wedge \tilde b = s$. Denote by $\tilde A_{(b,i)}$ and $\tilde A_{(\tilde b, j)}$ two independent (non-coalescing) ancestral lines starting in $(b,i)$ and in $(\tilde b, j)$ respectively.
  Since those can not meet before $sn$ we can write
  \begin{equation}
    \EW_{ \mathfrak y }\left[ \left\vert\tilde A_{(b,i)} \cap\tilde A_{(\tilde b, j)}   \right\vert  \right] = \sum_{ r \geq i-sn } q_r q_{r+j-i}.
  \end{equation}
  By a decomposition with respect to the most recent common ancestor we can write
  \begin{equation}
    \EW_{ \mathfrak y }\left[ \left\vert\tilde A_{(b,i)} \cap\tilde A_{(\tilde b, j)}   \right\vert  \right] =  \WS_{ \mathfrak y }\left( (b,i) \sim (\tilde b, j) \right) \sum_{r\geq 0}q_r^2.
  \end{equation}
  Giving us
  \begin{equation}
    \WS_{ \mathfrak y   }\left( (b,i) \sim (\tilde b, j) \right) = C_2 \sum_{ r \geq (i\wedge j)-sn } q_r q_{r+|j-i|} = C_2 \sum_{ r\geq 0 } q_{ i-sn +r }q_{j-sn+r} .
  \end{equation}
\end{proof}
\section{Some analytic identities}\label{sec:analyticidentities}
In this section we want to state some analytic identities which are a consequence of \eqref{eq:covequality}.
First recall the definitions
\begin{equation}
  C_H:= \left( - \frac{ 2^{-2H} \Gamma(-H) \Gamma\left(H+\frac12\right) }{\sqrt{\pi}}  \right)^{\frac12},
\end{equation}
and
\begin{eqnarray}
  \rho^{\mathrm{HS}}\left(t_1, t_2, s\right)
  &:=&\frac12 \left[ t_1^{2\alpha+1} - (t_1-s)^{2\alpha+1}
         +     t_2^{2\alpha+1} -  (t_2-s)^{2\alpha+1}\right] \nonumber\\
    &&+   \frac{ \int\limits_0^{t_1-s} \int\limits_0^{t_2-s} \int\limits_0^\infty (y_3+y_1)^{\alpha-1} (y_3+y_2)^{\alpha-1} \dif{y_3} \dif{y_2} \dif{y_1} }{ \Gamma(1-2\alpha) \Gamma(\alpha) \Gamma(1-\alpha)^{-1} (\alpha (2\alpha+1))^{-1} }.\nonumber
\end{eqnarray}
as well as
\begin{eqnarray*}
   \rho^{\mathrm{K}}\left(t_1, t_2, s\right)
  &:=&  \frac{1}{C_H^2}\left(\int_{-\infty}^0 \left((t_1-\xi)^{H-\frac12}-(-\xi)^{H-\frac12}  \right)\left((t_2-\xi)^{H-\frac12}-(-\xi)^{H-\frac12}  \right) \dif{\xi} \right) \\
  && + \frac{1}{C_H^2}\left( \int_0^{s} (t_1-\xi)^{H-\frac12} (t_2-\xi)^{H-\frac12} \dif{\xi}   \right).
\end{eqnarray*}
We already know that for
 $H=\alpha + \frac12$ 
\begin{equation}\label{eq:covequal}
  \rho^{\mathrm{HS}}(t_1, t_2,s) = \rho^{\mathrm{K}}\left( t_1, t_2, s \right).
\end{equation}
This gives the following analytic identity: 
\begin{proposition}\label{prop:id1}
  For all $t_1, t_2 >s \geq 0$ we have the equality
  \begin{eqnarray*}
  &&  \frac12 \left[ t_1^{2\alpha+1} - (t_1-s)^{2\alpha+1}\nonumber
     +     t_2^{2\alpha+1} -  (t_2-s)^{2\alpha+1}\right]\nonumber\\
    &&+   \frac{ \int\limits_0^{t_1-s} \int\limits_0^{t_2-s} \int\limits_0^\infty (y_3+y_1)^{\alpha-1} (y_3+y_2)^{\alpha-1} \dif{y_3} \dif{y_2} \dif{y_1} }{ \Gamma(1-2\alpha) \Gamma(\alpha) \Gamma(1-\alpha)^{-1} (\alpha (2\alpha+1))^{-1} }\nonumber\\
    &=&  \frac{1}{C_{\alpha+\frac12}^2}\left(\int_{-\infty}^0 \left((t_1-\xi)^{\alpha}-(-\xi)^{\alpha}  \right)\left((t_2-\xi)^{\alpha}-(-\xi)^{\alpha}  \right) \dif{\xi} \right)  + \frac{1}{C_{\alpha+\frac12}^2}\left( \int_0^{s} (t_1-\xi)^{\alpha} (t_2-\xi)^{\alpha} \dif{\xi}   \right).
  \end{eqnarray*}
\end{proposition}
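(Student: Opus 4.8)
The plan is to prove this as a \emph{corollary} of the probabilistic identity \eqref{eq:covequality} rather than by a direct analytic computation: I would recognise both sides of the asserted equation as two different expressions for one and the same quantity, namely the covariance
\[
\rho(t_1,t_2,s)=\Cov\left[B_b(t_1),B_{\tilde b}(t_2)\right]
\]
of $\mathfrak y$-indexed fractional Brownian motion, evaluated at two branches $b,\tilde b$ with $b\wedge\tilde b=s$ and at times $t_1,t_2>s$, where throughout $H=\alpha+\tfrac12$ (so that $H-\tfrac12=\alpha$).

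First I would identify the right-hand side. Substituting $H-\tfrac12=\alpha$ and $C_{\alpha+1/2}=C_H$, the right-hand side is by inspection exactly $\rho^{\mathrm{K}}(t_1,t_2,s)$ as defined in \eqref{eq:rhoKtt}. This expression was obtained directly from the Mandelbrot--van Ness kernel representation \eqref{eq:bbfbmkernel} of BFBM via the It\^o isometry: the Gaussian white noise shared by $b$ and $\tilde b$ on the common past $(-\infty,s]$ produces the integral over $(-\infty,0]$ together with the integral up to $s$, while the increments after $s$ are independent and do not contribute. One checks in passing that the integrals converge for $\alpha\in(0,\tfrac12)$ (the integrand of the $(-\infty,0]$-integral decays like $(-\xi)^{2\alpha-2}$, and $\int_0^\infty(y_3+y_1)^{\alpha-1}(y_3+y_2)^{\alpha-1}\dif y_3<\infty$ since $2\alpha-2<-1$).

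Next I would identify the left-hand side as $\rho^{\mathrm{HS}}(t_1,t_2,s)$ from \eqref{eq:covhsbranching}, which is by definition the $n\to\infty$ limit of the rescaled covariance $c(n)^{-2}\,\Cov\big[\sum_{l\le t_1 n}Y_{b,l},\sum_{l\le t_2 n}Y_{\tilde b,l}\big]$ of the tree-indexed Hammond--Sheffield walk; this limit is computed term by term in \appB. Now the two expressions are tied together by Theorem~\ref{th:dbfbm}: since $S^{(n)}_{\mathfrak y}$ converges in distribution to BFBM, the limiting covariance $\rho^{\mathrm{HS}}$ of the approximation must coincide with the covariance $\rho^{\mathrm{K}}$ of the limit. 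Passing from distributional convergence to convergence of second moments is legitimate because the family $\big(S^{(n)}_b(t)\big)_n$ is bounded in $L^2$ (as already recorded in the proof of Proposition~\ref{prop:condew}), so that the products $S^{(n)}_b(t_1)S^{(n)}_{\tilde b}(t_2)$ are uniformly integrable. Equating $\rho^{\mathrm{HS}}=\rho^{\mathrm{K}}$ is precisely the claimed identity.

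The key point to stress is that this step is exactly what makes the identity tractable: the main obstacle is that there appears to be no convenient \emph{direct} analytic route from the triple integral on the left to the kernel integrals on the right, so the entire force of the argument is carried by the two independent covariance computations (the kernel computation \eqref{eq:rhoKtt} and the renewal/coalescence computation of \appB) meeting at the common probabilistic object. In other words, the only genuine content of the proof is the invocation of \eqref{eq:covequality}, together with the $L^2$-control that upgrades the convergence in Theorem~\ref{th:dbfbm} to convergence of covariances; everything else is a matching of notation.
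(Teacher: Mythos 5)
Your proposal takes essentially the same route as the paper: the paper's proof of Proposition~\ref{prop:id1} consists of nothing more than recognising the left-hand side as $\rho^{\mathrm{HS}}(t_1,t_2,s)$ from \eqref{eq:covhsbranching}, the right-hand side as $\rho^{\mathrm{K}}(t_1,t_2,s)$ from \eqref{eq:rhoKtt} with $H=\alpha+\tfrac12$, and invoking \eqref{eq:covequality}; like you, the authors stress that the identity is obtained through the stochastic interpretation (two computations of one and the same covariance) rather than by direct analytic manipulation.

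There is, however, one technical misstep in your justification of \eqref{eq:covequality}. Boundedness of $\big(S^{(n)}_b(t)\big)_n$ in $L^2$ gives uniform integrability of the variables $S^{(n)}_b(t)$ themselves, but it does \emph{not} give uniform integrability of the products $S^{(n)}_b(t_1)S^{(n)}_{\tilde b}(t_2)$: for instance $X_n:=n\,\mathbf{1}_{A_n}$ with $\WS(A_n)=n^{-2}$ is bounded in $L^2$ and converges to $0$ in distribution, yet $\EW\big[X_n^2\big]=1$ for all $n$, so second moments need not pass to the limit. As written, the step from ``$S^{(n)}_{\mathfrak y}\Rightarrow$ BFBM'' to ``$\rho^{\mathrm{HS}}=\rho^{\mathrm{K}}$'' is therefore not justified. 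Two repairs are available. First, one can use the boundedness of the types $Y_i$ (condition \eqref{fourth}; Rademacher variables in the present setting) to obtain uniform fourth-moment bounds on $S^{(n)}$, hence $L^2$-boundedness and thus uniform integrability of the products. Second, and closer to the paper's own logic: by Proposition~\ref{prop:condGauss} the finite-dimensional vectors are asymptotically Gaussian, and their exact prelimit covariances converge to $\rho^{\mathrm{HS}}$ by the computation of \appB; since weak convergence of Gaussian laws forces convergence of their covariance parameters, the limit of the finite-dimensional distributions is Gaussian with covariance $\rho^{\mathrm{HS}}$, while Theorem~\ref{th:dbfbm} (whose proof rests on the uniqueness statement Corollary~\ref{cor:uniquedistGauss}, not on the covariance computation, so no circularity arises) identifies this limit as kernel BFBM with covariance $\rho^{\mathrm{K}}$. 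With either repair your argument is complete and coincides with the paper's.
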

 The special case of $t_1 =t_2 \equiv t >s$ then gives the following identity: 
 \begin{corollary}\label{cor:id2}
   Let $t>s>0$ then
  \begin{eqnarray*}
  &&t^{2\alpha +1} -(t-s)^{2\alpha +1} \frac{ \sqrt{\pi} 2^{2\alpha} }{ \Gamma(1-H)  \Gamma\left(\alpha\right) }\\
  &=& t^{2\alpha +1}- (t-s)^{2\alpha +1}+ \frac{\alpha (2\alpha +1) \Gamma(1-\alpha)}{\Gamma(\alpha)\Gamma(1-2\alpha)} \int_0^{t-s}\int_0^{t-s} \int_0^\infty (y_3+y_1)^{\alpha-1}(y_3+y_2)^{\alpha-1} \dif{y_3} \dif{y_2} \dif{y_1}.
\end{eqnarray*}
\end{corollary}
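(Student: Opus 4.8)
The plan is to obtain Corollary~\ref{cor:id2} as the diagonal specialization $t_1=t_2=t$ of Proposition~\ref{prop:id1}, which in turn is nothing but the identity \eqref{eq:covequality}, $\rho^{\mathrm K}=\rho^{\mathrm{HS}}$, written out. Since \eqref{eq:covequality} has already been secured through the probabilistic route — both $\rho^{\mathrm K}$ (via the kernel representation \eqref{eq:rhoKtt}) and $\rho^{\mathrm{HS}}$ (via the Hammond--Sheffield approximation \eqref{eq:covhsbranching} together with Theorem~\ref{th:dbfbm}) compute the covariance $\Cov\left[B_b(t_1),B_{\tilde b}(t_2)\right]$ of one and the same $\mathfrak y$-indexed BFBM, and Corollary~\ref{cor:uniquedistGauss} forces these to agree — no new analytic input is required. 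The work is purely in bookkeeping the two sides at $t_1=t_2=t$.

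For the left-hand side I would invoke the closed form already computed in \eqref{eq:formcov}: for $t_1=t_2=t$ the kernel covariance equals $\rho^{\mathrm K}(t,t,s)=t^{2H}-C_{\rho}(t-s)^{2H}$ with $C_{\rho}=\sqrt{\pi}\,2^{2H-1}/(\Gamma(1-H)\Gamma(H+\tfrac12))$. Substituting $H=\alpha+\tfrac12$, so that $t^{2H}=t^{2\alpha+1}$, $2^{2H-1}=2^{2\alpha}$ and $\Gamma(H+\tfrac12)=\Gamma(\alpha+1)=\alpha\,\Gamma(\alpha)$, turns this into the displayed left-hand side of the Corollary. For the right-hand side I would set $t_1=t_2=t$ directly in $\rho^{\mathrm{HS}}$ from \eqref{eq:covhsbranching}: the symmetric bracket $\tfrac12[\,\cdots\,]$ collapses to $t^{2\alpha+1}-(t-s)^{2\alpha+1}$, while the triple integral survives, and simplifying the compound denominator $\Gamma(1-2\alpha)\Gamma(\alpha)\Gamma(1-\alpha)^{-1}(\alpha(2\alpha+1))^{-1}$ produces exactly the prefactor $\alpha(2\alpha+1)\Gamma(1-\alpha)/(\Gamma(\alpha)\Gamma(1-2\alpha))$ shown in the Corollary. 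Equating the two sides by \eqref{eq:covequality} then gives the claim.

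For completeness I would also sketch a self-contained analytic verification that bypasses the probabilistic identity, as an independent check. Carrying out the two inner integrations, each of the form $\int_0^{t-s}(y_3+y_i)^{\alpha-1}\,\mathrm dy_i=\tfrac1\alpha[(y_3+(t-s))^\alpha-y_3^\alpha]$ (legitimate since $\alpha>0$), reduces the triple integral to $\alpha^{-2}\int_0^\infty[(y+(t-s))^\alpha-y^\alpha]^2\,\mathrm dy$; scaling out $t-s$ shows this equals $(t-s)^{2\alpha+1}\alpha^{-2}\int_0^\infty[(1+v)^\alpha-v^\alpha]^2\,\mathrm dv$. The remaining constant $\int_0^\infty[(1+v)^\alpha-v^\alpha]^2\,\mathrm dv$ is precisely $C_H^2-(2\alpha+1)^{-1}$, because the Mandelbrot--van Ness normalization \eqref{eq:fbmconst} is fixed so that $\Varo[B^H_1]=1$. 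The main obstacle on this route is then the Gamma-function bookkeeping: one must reduce the resulting expression, built from $C_H^2=-2^{-2H}\Gamma(-H)\Gamma(H+\tfrac12)/\sqrt{\pi}$, to the clean constant $\sqrt{\pi}\,2^{2\alpha}/(\Gamma(1-H)\Gamma(\alpha+1))$ by means of the reflection and duplication formulae for $\Gamma$. This elementary but delicate manipulation is exactly the computation the probabilistic argument lets us avoid, which is the whole point of presenting these identities as consequences of \eqref{eq:covequality}.
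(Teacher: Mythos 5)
Your proposal is correct and follows essentially the same route as the paper's own proof: specialize the identity $\rho^{\mathrm K}=\rho^{\mathrm{HS}}$ (Proposition~\ref{prop:id1}, i.e.\ \eqref{eq:covequality}) to $t_1=t_2=t$, insert the closed form \eqref{eq:formcov} for $\rho^{\mathrm K}(t,t,s)$, and substitute $H=\alpha+\tfrac12$; your additional self-contained analytic sketch (reducing the triple integral to $(t-s)^{2\alpha+1}\alpha^{-2}\bigl(C_H^2-\tfrac{1}{2\alpha+1}\bigr)$ via the normalization $\Varo\bigl[B^H_1\bigr]=1$) is extra and not in the paper. One flag: as your own computation $\Gamma\bigl(H+\tfrac12\bigr)=\Gamma(\alpha+1)=\alpha\,\Gamma(\alpha)$ shows, the substitution produces the constant $\sqrt{\pi}\,2^{2\alpha}/\bigl(\Gamma(1-H)\Gamma(\alpha+1)\bigr)$, so the $\Gamma(\alpha)$ displayed in Corollary~\ref{cor:id2} is a typo (off by a factor $\alpha$) that the paper's proof inherits as well, and you should say so explicitly rather than assert an exact match with the displayed left-hand side.
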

Choosing  $t_1 \equiv t>t_2 \equiv 1 $  and $s<1$ in Proposition~\ref{prop:id1} shows a beautiful connection to the generalized hypergeometric function, which is  defined by 
\begin{equation}
  {}_2F_1\left( a,b;c;z \right)=\sum_{n\geq 0} \frac{ (a)_n (b)_n }{(c)_n}\frac{z^n}{n!}, \qquad |z|<1.
\end{equation}
\begin{corollary}\label{cor:id3}
  For $t>s$
  \begin{eqnarray*}
  && \frac12 \left[ t^{2\alpha+1} - (t-s)^{2\alpha+1}
         +     1 -  (1-s)^{2\alpha+1}\right] \nonumber\\
  &&+   \frac{ \int\limits_0^{t-s} \int\limits_0^{1-s} \int\limits_0^\infty (y_3+y_1)^{\alpha-1} (y_3+y_2)^{\alpha-1} \dif{y_3} \dif{y_2} \dif{y_1} }{ \Gamma(1-2\alpha) \Gamma(\alpha) \Gamma(1-\alpha)^{-1} (\alpha (2\alpha+1))^{-1} }\nonumber\\
  &=&\lim\limits_{y\rightarrow \infty}\left[\frac{1}{\alpha C_H^2} \left[ {- \frac{ x^{\alpha+1}(t+x)^{\alpha} \left( \frac{t+x}{t} \right)^{-\alpha} {}_2F_1\left( 1+\alpha, \alpha; 2+\alpha ; - \frac{x}{t} \right) }{ \frac{1}{\alpha}+1 }}  \right.\right.\\
  &&+ {\frac{ \left(x+1\right)^{\alpha+1} \left(t+x\right)^{\alpha+1} \left( \frac{t+x}{t-1} \right)^{-\alpha}  {}_2F_1\left( 1+\alpha, \alpha; 2+\alpha ; \frac{x+1}{1-t} \right) }{\frac{1}{\alpha}+1}} \\
  &&- \frac{ x^{\alpha+1}  {}_2F_1\left( 1+\alpha, \alpha; 2+\alpha ; -x \right)  }{\frac{1}{\alpha}+1} \left.\left.+ {\frac{x^{1+\alpha} }{\frac{1}{\alpha}+2}} \right] \right]_{x=0}^{x=y}\\
   &&- \frac{1}{1+\alpha}\left(1-s\right)^{\alpha+1} \left( t-1 \right)^{\alpha} {}_2F_1\left( -\alpha, \alpha+1; \alpha+2; \frac{s-1}{t-1} \right)\\
  &&+\frac{1}{1+\alpha}\left(t-1\right)^{-\alpha}  {}_2F_1\left( -\alpha, \alpha+1; \alpha+2; \frac{-1}{t-1} \right).
\end{eqnarray*}
\end{corollary}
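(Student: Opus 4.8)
The plan is to obtain Corollary~\ref{cor:id3} as the explicit closed-form evaluation of the kernel covariance $\rho^{\mathrm{K}}(t,1,s)$ in the special case $t_1=t$, $t_2=1$. By the identity \eqref{eq:covequal} we already know $\rho^{\mathrm{HS}}(t,1,s)=\rho^{\mathrm{K}}(t,1,s)$, and the left-hand side of the corollary is, by definition, exactly $\rho^{\mathrm{HS}}(t,1,s)$ evaluated at $t_1=t$, $t_2=1$. Hence the whole task reduces to computing the two integrals defining $\rho^{\mathrm{K}}(t,1,s)$ explicitly. The single analytic tool needed throughout is the indefinite-integral identity $\int x^{\mu}(1+x)^{\nu}\,\dif{x}=\frac{x^{\mu+1}}{\mu+1}\,{}_2F_1(-\nu,\mu+1;\mu+2;-x)+C$, which one checks by differentiating the series term by term and recognizing the binomial series ${}_1F_0$.

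First I would treat the integral over $(0,s]$ coming from the second term of $\rho^{\mathrm{K}}$. After the substitution $\xi=1-v$ it becomes $\int_{1-s}^{1}v^{\alpha}\,(v+(t-1))^{\alpha}\,\dif{v}$, a standard integral of the form $\int v^{\mu}(v+a)^{\nu}\,\dif{v}$. Rescaling by $a=t-1$ and evaluating at the endpoints $v=1-s$ and $v=1$ produces the last two lines of the claimed identity, after rewriting the arguments via $\frac{s-1}{t-1}=-\frac{1-s}{t-1}$ and applying an Euler or Pfaff transformation of ${}_2F_1$ to match the stated parameter pattern $(1+\alpha,\alpha;2+\alpha)$.

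The harder part is the improper integral over $(-\infty,0]$. Substituting $\xi=-x$ turns it into $\int_0^{\infty}\big[(t+x)^{\alpha}-x^{\alpha}\big]\big[(1+x)^{\alpha}-x^{\alpha}\big]\,\dif{x}$, which converges because the bracketed factors decay like $\alpha t\,x^{\alpha-1}$ and $\alpha\,x^{\alpha-1}$, so the integrand is $O(x^{2\alpha-2})$ with $2\alpha-2<-1$. Expanding the product gives four pieces, $(t+x)^{\alpha}(1+x)^{\alpha}$, $-(t+x)^{\alpha}x^{\alpha}$, $-(1+x)^{\alpha}x^{\alpha}$ and $x^{2\alpha}$, each of which has an antiderivative expressible through the same ${}_2F_1$ formula; for the mixed term $\int(t+x)^{\alpha}(1+x)^{\alpha}\,\dif{x}$ one shifts $u=x+1$ and applies the formula with $a=t-1$. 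The delicate point, and the main obstacle, is that the individual antiderivatives each diverge as $x\to\infty$ while their combination stays finite, so one must keep the four antiderivatives inside a single bracket $\big[\,\cdot\,\big]_{x=0}^{x=y}$ and pass to the limit $y\to\infty$, verifying that the leading divergent contributions cancel. At the lower endpoint $x=0$ every antiderivative vanishes, being $O(x^{\alpha+1})$ or $O(x^{2\alpha+1})$ with $\alpha>0$, so only the boundary at $y\to\infty$ survives.

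Finally I would assemble the two evaluations, divide by $C_H^2$, and simplify the hypergeometric prefactors using $(t+x)^{\alpha}\big(\tfrac{t+x}{t}\big)^{-\alpha}=t^{\alpha}$ together with the analogous simplification $(t+x)^{\alpha}\big(\tfrac{t+x}{t-1}\big)^{-\alpha}=(t-1)^{\alpha}$ for the $(x+1)$-shifted term, thereby matching the exact form displayed in the statement. The only genuinely new input beyond \eqref{eq:covequal} is the careful bookkeeping of the divergent boundary contributions in the improper integral; everything else is the antiderivative identity combined with standard ${}_2F_1$ transformations, so the calculation is best organized as one symbolic evaluation with the convergence argument for the $(-\infty,0]$ integral supplied separately.
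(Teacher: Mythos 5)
Your proposal is correct and follows essentially the same route as the paper: both reduce the claim via \eqref{eq:covequal} to an explicit evaluation of $\rho^{\mathrm{K}}(t,1,s)$, handle the $(-\infty,0]$ integral through a ${}_2F_1$ antiderivative kept inside a single bracket $\left[\,\cdot\,\right]_{x=0}^{x=y}$ with $y\to\infty$, and express the $[0,s]$ integral through a ${}_2F_1$ evaluation (the paper invokes DLMF 15.6.1 where you substitute and apply Euler/Pfaff transformations). The only cosmetic difference is that you derive the product antiderivative by expanding into four elementary pieces, whereas the paper cites the combined identity as CAS-verified.
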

We start with the proof of Proposition~\ref{prop:id1}: 
\begin{proof}[Proof of Proposition~\ref{prop:id1}]
  We know that \eqref{eq:covequal} holds. Since
   \begin{eqnarray*}
  \rho^{\mathrm{HS}}(t_1, t_2,s)
  &=&  \frac12 \left[ t_1^{2\alpha+1} - (t_1-s)^{2\alpha+1}\nonumber
         +     t_2^{2\alpha+1} -  (t_2-s)^{2\alpha+1}\right] \nonumber\\
     &&+   \frac{ \int\limits_0^{t_1-s} \int\limits_0^{t_2-s} \int\limits_0^\infty (y_3+y_1)^{\alpha-1} (y_3+y_2)^{\alpha-1} \dif{y_3} \dif{y_2} \dif{y_1} }{ \Gamma(1-2\alpha) \Gamma(\alpha) \Gamma(1-\alpha)^{-1} (\alpha (2\alpha+1))^{-1} }\nonumber
   \end{eqnarray*}
  and 
   \begin{eqnarray*}
     \rho^{\mathrm{K}}\left( t_1, t_2, s \right)
      &=&  \frac{1}{C_H^2}\left(\int_{-\infty}^0 \left((t_1-\xi)^{H-\frac12}-(-\xi)^{H-\frac12}  \right)\left((t_2-\xi)^{H-\frac12}-(-\xi)^{H-\frac12}  \right) \dif{\xi} \right) \\
                                                            && + \frac{1}{C_H^2}\left( \int_0^{s} (t_1-\xi)^{H-\frac12} (t_2-\xi)^{H-\frac12} \dif{\xi}   \right),
   \end{eqnarray*}
   we have the desired result for $H=\alpha + \frac12$. 
 \end{proof}
 \begin{proof}[Proof of Corollary~\ref{cor:id2}]
  Since for $t_1=t_2 \equiv t > s$ we have
\begin{eqnarray*}
  &&\rho^{\mathrm{K}}\left( t, t, s \right)\\
  &=&  \frac{1}{C_H^2} \left(\int_{-\infty}^0 \left((t-\xi)^{H-\frac12}-(-\xi)^{H-\frac12}  \right)^2 \dif{\xi} \right)  \\
  &&+ \frac{1}{C_H^2} \left( \int_0^{s} (t-\xi)^{2H-1}  \dif{\xi}   \right)\\
  &=& t^{2H} -(t-s)^{2H} \frac{ \sqrt{\pi} 2^{2H-1} }{ \Gamma(1-H)  \Gamma\left(H+\frac12\right) }
\end{eqnarray*}
and
\begin{eqnarray*}
  &&\rho^{\mathrm{HS}}(t, t, s)\\
  &=& t^{2\alpha +1}- (t-s)^{2\alpha +1}\\
  &&+ \frac{\alpha (2\alpha +1) \Gamma(1-\alpha)}{\Gamma(\alpha)\Gamma(1-2\alpha)} \int_0^{t-s}\int_0^{t-s} \int_0^\infty (y_3+y_1)^{\alpha-1}(y_3+y_2)^{\alpha-1} \dif{y_3} \dif{y_2} \dif{y_1}.
\end{eqnarray*}
this gives the nice identity
\begin{eqnarray*}
  &&\rho^{\mathrm{K}}\left( t, t, s \right)\\
  &=&t^{2H} -(t-s)^{2H} \frac{ \sqrt{\pi} 2^{2H-1} }{ \Gamma(1-H)  \Gamma\left(H+\frac12\right) }\\
  &=& t^{2\alpha +1}- (t-s)^{2\alpha +1}\\
  &&+ \frac{\alpha (2\alpha +1) \Gamma(1-\alpha)}{\Gamma(\alpha)\Gamma(1-2\alpha)} \int_0^{t-s}\int_0^{t-s} \int_0^\infty (y_3+y_1)^{\alpha-1}(y_3+y_2)^{\alpha-1} \dif{y_3} \dif{y_2} \dif{y_1}\\
  &=&\rho^{\mathrm{HS}}(t, t, s).
\end{eqnarray*}
Plugging in $H=\alpha + \frac12$ gives the desired result. 
\end{proof}
\begin{proof}[Proof of Corollary~\ref{cor:id3}]
For $t_1=t, t_2 = 1, s< 1$ we have for $H=\alpha + \frac12$
\begin{eqnarray*}
  &&\rho^{\mathrm{K}}\left( t_1, t_2, s \right)\\
  &=&  \frac{1}{C_H^2}\left(\int_{-\infty}^0 \left((t_1-\xi)^{H-\frac12}-(-\xi)^{H-\frac12}  \right)\left((t_2-\xi)^{H-\frac12}-(-\xi)^{H-\frac12}  \right) \dif{\xi} \right) \\
  && + \frac{1}{C_H^2}\left( \int_0^{s} (t_1-\xi)^{H-\frac12} (t_2-\xi)^{H-\frac12} \dif{\xi}   \right)\\
   &=&\lim\limits_{y\rightarrow \infty}\left[\frac{1}{\alpha C_H^2} \left[ {- \frac{ x^{\alpha+1}(t+x)^{\alpha} \left( \frac{t+x}{t} \right)^{-\alpha} {}_2F_1\left( 1+\alpha, \alpha; 2+\alpha ; - \frac{x}{t} \right) }{ \frac{1}{\alpha}+1 }}  \right.\right.\\
  &&+ {\frac{ \left(x+1\right)^{\alpha+1} \left(t+x\right)^{\alpha+1} \left( \frac{t+x}{t-1} \right)^{-\alpha}  {}_2F_1\left( 1+\alpha, \alpha; 2+\alpha ; \frac{x+1}{1-t} \right) }{\frac{1}{\alpha}+1}} \\
  &&- \frac{ x^{\alpha+1}  {}_2F_1\left( 1+\alpha, \alpha; 2+\alpha ; -x \right)  }{\frac{1}{\alpha}+1}\\
  && \left.\left.+ {\frac{x^{1+\alpha} }{\frac{1}{\alpha}+2}} \right] \right]_{x=0}^{x=y}\\
   &&- \frac{1}{1+\alpha}\left(1-s\right)^{\alpha+1} \left( t-1 \right)^{\alpha} {}_2F_1\left( -\alpha, \alpha+1; \alpha+2; \frac{s-1}{t-1} \right)\\
  &&+\left(t-1\right)^{-\alpha} \frac{ {}_2F_1\left( -\alpha, \alpha+1; \alpha+2; \frac{-1}{t-1} \right) }{\alpha+1}.
\end{eqnarray*}
 To obtain this equality we apply the identity, which can be checked with every well-known CAS, for example \texttt{Mathematica},
\begin{eqnarray*}
&&\int \left((x+1)^{\frac{1}{c}}-x^{\frac{1}{c}}\right)
   \left((b+x)^{\frac{1}{c}}-x^{\frac{1}{c}}\right)\\
   &=& c \left(-\frac{x^{\frac{1}{c}+1} (b+x)^{\frac{1}{c}}
   \left(\frac{b+x}{b}\right)^{-1/c} {}_2F_1\left(1+\frac{1}{c},-\frac{1}{c};2+\frac{1}{c};-\frac{x}{b}\right)}{c+1}\right.\\
   &&\left.+\frac{(x+1)^{\frac{1}{c}+1} (b+x)^{\frac{1}{c}} \left(\frac{b+x}{b-1}\right)^{-1/c} 
   {}_2F_1\left(1+\frac{1}{c},-\frac{1}{c};2+\frac{1}{c};\frac{x+1}{1-b}\right)}{c+1}\right.\\
   &&\left.-\frac{x^{\frac{1}{c}+1} \,
   _2F_1\left(1+\frac{1}{c},-\frac{1}{c};2+\frac{1}{c};-x\right)}{c+1}+\frac{x^{\frac
   {c+2}{c}}}{c+2}\right)  , \qquad b>1,\, c>0
\end{eqnarray*}
with $c=\tfrac{1}{\alpha},\, b=t$, 
%
and transform the second integral, such that we can make use of
\cite[\href{https://dlmf.nist.gov/15.6\#E1}{(15.6.1)}]{NISTDLMF}.

By \eqref{eq:covequal} 
this is equal to
\begin{eqnarray*}
 && \rho^{\mathrm{HS}}(t, 1,s)\\
  \\&=&  \frac12 \left[ t^{2\alpha+1} - (t-s)^{2\alpha+1}\nonumber
         +     1 -  (1-s)^{2\alpha+1}\right] \nonumber\\
    &&+   \frac{ \int\limits_0^{t-s} \int\limits_0^{1-s} \int\limits_0^\infty (y_3+y_1)^{\alpha-1} (y_3+y_2)^{\alpha-1} \dif{y_3} \dif{y_2} \dif{y_1} }{ \Gamma(1-2\alpha) \Gamma(\alpha) \Gamma(1-\alpha)^{-1} (\alpha (2\alpha+1))^{-1} }\nonumber.
\end{eqnarray*}
This gives the desired identity.
\end{proof}
\newpage
\bibliography{bfbm} 
\bibliographystyle{habbrv}
\end{document}